\documentclass[a4paper,12pt,intlimits,oneside]{amsart}

\usepackage{amssymb,latexsym,amsmath,mathrsfs}
\usepackage{amsfonts,amsbsy,bm}
\usepackage{color}
\usepackage[margin=2cm]{geometry}

\usepackage{amsmath}
\usepackage{amssymb}
\usepackage{amsfonts}
\usepackage{amsthm,amscd}

\newtheorem{theorem}{Theorem}[section]
\newtheorem{proposition}[theorem]{Proposition}
\newtheorem{corollaire}[theorem]{Corollary}
\newtheorem{lemme}[theorem]{Lemma}

\newtheorem{remark}[theorem]{Remark}

\theoremstyle{definition}
\newcommand{\comment}[1]{}

\numberwithin{equation}{section}

\newcommand{\epf}{ $\Box$\medskip}

\def\supp{{\rm supp}}

\theoremstyle{definition}

\begin{document}
\title[Factorization and Atomic Decomposition]{Factorization and Atomic Decomposition in Hardy-Orlicz Spaces on the Upper Half-Plane with Applications to Hankel Operators}
\author[J.M. Tanoh Dje]{Jean$-$Marcel Tanoh Dje}
\address{Unit\'e de Recherche et d'Expertise Num\'erique, Universit\'e Virtuelle de C\^ote d'Ivoire, Cocody II-Plateaux - 28 BP 536 ABIDJAN 28}
\email{{\tt tanoh.dje@uvci.edu.ci}}
\author[J. Feuto]{Justin Feuto}
\address{Laboratoire de Math\'ematiques et Applications, UFR Math\'ematiques$-$Informatique, Universit\'e F\'elix Houphou\"et-Boigny Abidjan-Cocody, 22 B.P 1194 Abidjan 22. C\^ote d'Ivoire}
\email{{\tt justfeuto@yahoo.fr}}
\author[Beno\^it Sehba]{Beno\^it F. Sehba}
\address{Department of Mathematics, University of Ghana,\\ P. O. Box LG 62 Legon, Accra, Ghana}
\email{bfsehba@ug.edu.gh}

\subjclass{}
\keywords{}

\date{}

\begin{abstract}
In this work, we establish a strong factorization for Hardy-Orlicz spaces on the upper half-plane. We show that the product of two functions belonging respectively to Hardy-Orlicz spaces $H^{\Phi_{1}}$ and $H^{\Phi_{2}}$ lies in a third space $H^{\Phi_{3}}$, and every holomorphic function in 
$H^{\Phi_{3}}$ admits such a decomposition. We then provide an atomic decomposition for certain Hardy-Orlicz spaces, which allows us to describe the topological dual of these spaces when the associated function is concave. Finally, these results are applied to the study of the continuity of the Hankel operators in this setting.
\end{abstract}

\maketitle

\section{Introduction and statement of main results.}

Recall that a function $\Phi : [0,\infty) \rightarrow [0,\infty)$ is called an growth function if it is nondecreasing, $\displaystyle\lim_{t \to 0} \Phi(t) =\Phi(0)= 0$, $\Phi(t) > 0$
for $t \in (0,\infty)$ and $\displaystyle\lim_{t \to \infty} \Phi(t) = \infty$.  The growth function  $\Phi$ is said to be of upper type (resp. lower type) if there exists  $p \in (0,\infty)$ and a constant $C>1$ such that for all $t \in  [1,\infty)$ (resp. $t \in  [0,1]$) and $s \in  [0,\infty)$,
\begin{equation}\label{eq:sui8n}
\Phi(st)\leq Ct^{p}\Phi(s).\end{equation}
We denote by $\mathscr{U}^{p}$ (resp. $\mathscr{L}_{p}$) the set of all growth functions of upper-type $p \geq 1$ (resp. lower-type $0< p\leq 1$) such that the function 
$t\mapsto \frac{\Phi(t)}{t}$ is non decreasing (resp. non-increasing) on $(0,\infty)$. We put
$   \mathscr{U}:=\bigcup_{p\geq 1}\mathscr{U}^{p}$ (resp. $\mathscr{L}:=\bigcup_{0< p\leq 1}\mathscr{L}_{p}$).

\medskip

Two positive functions  $\Phi_{1}$ and $\Phi_{2}$ on $[0,\infty)$ are said to be equivalent (or $\Phi_{1} \sim \Phi_{2}$) if there exists a constant $c > 0$ such that
\begin{equation}\label{eq:equivalent}
c^{-1}\Phi_{1}(c^{-1}t) \leq \Phi_{2}(t)\leq c\Phi_{1}(ct), ~~ \forall~ t > 0.\end{equation} 

\medskip

We will assume in the sequel that  any element $\Phi$ of $\mathscr{U}$ (resp. $\mathscr{L}$) belongs to $\mathscr{C}^{1}([0,\infty))$ and is  
convex (resp. concave), 
(see for example \cite{djefeuto1, djesehb,  sehbaedgc}). 

\medskip

Let $\Phi$ be a growth function of lower type. Recall that the Hardy--Orlicz space on the unit disc 
 $\mathbb D := \left\lbrace z \in\mathbb C : \vert z\vert < 1\right\rbrace $ denoted by $H^\Phi(\mathbb D)$, is defined as the space of all holomorphic functions $G$ on $\mathbb{D}$ such that
$$   \|G\|_{H^{\Phi}(\mathbb D)}^{lux}:=\sup_{0\leq r<1}\inf\left\{\lambda>0 :  \frac{1}{2\pi}\int^{2\pi}_0\Phi\left(\frac{|G(re^{it})|}{\lambda}\right)dt \leq 1  \right\}< \infty. $$
Recently, the first two authors extended in their work \cite[Theorem 3]{djefeuto1} the strong factorization result originally established by Riesz for classical Hardy spaces (corresponding to $\Phi(t)=t^p$) to the setting of Hardy-Orlicz spaces on the complex unit disk. This result can be reformulated as follows: Let $\Phi_{1}$,  $\Phi_{2}$ and $\Phi_{3}$ be growth functions of the lower type. If  $\Phi_{3}^{-1} \sim \Phi_{1}^{-1}\cdot\Phi_{2}^{-1}$ then  $H^{\Phi_{3}}=H^{\Phi_{1}}\cdot H^{\Phi_{2}}$. 

This factorization result for Hardy-Orlicz spaces subsequently enabled the authors to establish both loss and gain estimates for Hankel operators acting between Hardy-Orlicz spaces (see respectively \cite[Theorem 4 ]{djefeuto1} and  \cite[Theorem 5 ]{djefeuto1}).

\medskip

The aim of this work is to carry out a similar study for Hardy-Orlicz spaces on the complex upper half-plane.

\medskip

Let  $\Phi$ be a lower type growth function.  The Orlicz space on $\mathbb{R}$ is the space  $L^{\Phi}(\mathbb{R})$ of  measurable functions $f : \mathbb{R} \longrightarrow \mathbb{C}$ which satisfy
$$ \|f\|_{L^{\Phi}(\mathbb{R})}^{lux}:=\inf\left\{\lambda>0 :  \int_{-\infty}^{\infty}\Phi\left(\frac{|f(t)|}{\lambda}\right)dt \leq 1  \right\}< \infty. $$
The Hardy-Orlicz space on  the upper complex half-plane  $\mathbb{C}_{+}:=\left\{ z=x+iy \in \mathbb{C} :   y > 0 \right\}$ is the space  $H^{\Phi}(\mathbb{C}_{+})$ of holomorphic functions $F$ on  $\mathbb{C}_{+}$ 
which satisfy
$$ \|F\|_{H^{\Phi}(\mathbb{C}_{+})}^{lux}:=\sup_{y>0 }\|F(\cdot+iy)\|_{L^{\Phi}(\mathbb{R})}^{lux}< \infty.   $$

In the following result, we present our statement of the strong factorization for Hardy-Orlicz spaces on the upper half-plane.

\begin{theorem}\label{pro:main2aqop}
Let $\Phi_{1}, \Phi_{2}, \Phi_{3} \in \mathscr{L} \cup \mathscr{U}$  such that $\Phi_{3}^{-1} \sim \Phi_{1}^{-1}\cdot \Phi_{2}^{-1}$, where $\Phi_{j}^{-1}$ denotes the inverse function of $\Phi_{j}$ for each $j \in \{1,2,3\}$. Then, for any $F_{1} \in H^{\Phi_{1}}(\mathbb{C}_{+})$ and $F_{2} \in H^{\Phi_{2}}(\mathbb{C}_{+})$, the product $F_{1}\cdot F_{2}$ belongs to $H^{\Phi_{3}}(\mathbb{C}_{+})$.  Conversely, for any $F \in H^{\Phi_{3}}(\mathbb{C}_{+})$, there exist functions $F_{1} \in H^{\Phi_{1}}(\mathbb{C}_{+})$ and $F_{2} \in H^{\Phi_{2}}(\mathbb{C}_{+})$ such that
$F = F_{1}\cdot F_{2}$.
Moreover, the following norm equivalence holds:
\begin{equation}\label{eq:ap21}
\|F\|_{H^{\Phi_{3}}(\mathbb{C}_{+})}^{\mathrm{lux}} \approx \|F_{1}\|_{H^{\Phi_{1}}(\mathbb{C}_{+})}^{\mathrm{lux}} \cdot \|F_{2}\|_{H^{\Phi_{2}}(\mathbb{C}_{+})}^{\mathrm{lux}}.
\end{equation}
\end{theorem}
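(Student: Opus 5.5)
Both directions reduce to two facts about Orlicz spaces on $\mathbb{R}$ together with the inner--outer (canonical) factorization of Hardy--Orlicz functions on $\mathbb{C}_+$.

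\textbf{Direct inclusion.} The first fact is a Hölder-type inequality. If $\Phi_3^{-1}\sim\Phi_1^{-1}\Phi_2^{-1}$, there is a constant $C$ with
\[
\Phi_3\Big(\frac{ab}{C}\Big)\le \Phi_1(a)+\Phi_2(b),\qquad a,b\ge 0.
\]
To see this, put $s=\max(\Phi_1(a),\Phi_2(b))$. Then $a\le\Phi_1^{-1}(s)$ and $b\le\Phi_2^{-1}(s)$, so $ab\le\Phi_1^{-1}(s)\Phi_2^{-1}(s)\le c\,\Phi_3^{-1}(cs)$. Applying $\Phi_3$ and using its upper or lower type bounds gives $\Phi_3(ab/C)\lesssim s$.

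Fix $y>0$ and set $f_j=F_j(\cdot+iy)/\lambda_j$, where $\lambda_j=\|F_j\|^{lux}_{H^{\Phi_j}}$. Integrating the inequality gives
\[
\int\Phi_3\big(|f_1f_2|/C\big)\le 2.
\]
Since $\Phi_3$ has a type, this rescales to $\|F_1F_2(\cdot+iy)\|^{lux}_{L^{\Phi_3}}\lesssim \lambda_1\lambda_2$, uniformly in $y$. This yields $F_1F_2\in H^{\Phi_3}$ and the bound $\lesssim$ in \eqref{eq:ap21}.

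\textbf{Converse.} Let $F\in H^{\Phi_3}(\mathbb{C}_+)$, $F\not\equiv 0$. The plan is:
\begin{enumerate}
\item Every $\Phi\in\mathscr{L}\cup\mathscr{U}$ satisfies $\Phi(t)\gtrsim t^{p}$ for $t\ge1$ with some $p>0$. One then shows that $H^{\Phi}(\mathbb{C}_+)$ functions have nontangential boundary values $f\in L^{\Phi}(\mathbb{R})$ and admit a factorization $F=BSO$. Here $B$ is a Blaschke product, $S$ a singular inner function (possibly with a factor $e^{i\alpha z}$), and $O$ is outer, with $|O|=|f|$ a.e. on $\mathbb{R}$.
\item For the outer part one needs $\|F\|^{lux}_{H^\Phi}\approx\|f\|^{lux}_{L^\Phi}$, and the fact that an outer function with boundary modulus in $L^\Phi$ lies in $H^\Phi$.
\item Define $h=\Phi_3(|f|/\lambda)$ with $\lambda=\|f\|^{lux}_{L^{\Phi_3}}$, so that $\int h\le1$. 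Set
\[
g_1=\lambda_1\Phi_1^{-1}(h),\qquad g_2=\lambda_2\Phi_2^{-1}(h),
\]
with $\lambda_1\lambda_2=\lambda$ (for instance $\lambda_1=\lambda_2=\sqrt\lambda$, or any split). The equivalence gives $g_1g_2\approx|f|$ up to constants, pointwise. Because the equivalence holds only up to dilation constants, define $g_2:=|f|/g_1$ exactly. Then verify $\int\Phi_2(g_2/(C\lambda_2))\lesssim\int h\le1$ using $\Phi_2^{-1}(s)\gtrsim\Phi_3^{-1}(s/c)/\Phi_1^{-1}(s)$ together with the type conditions. Clearly $\int\Phi_1(g_1/\lambda_1)=\int h\le1$.
\item Both $\log g_j$ are locally integrable against the Poisson measure $dt/(1+t^2)$, since $\log|f|$ is and $\log g_1$ is controlled by it via the type bounds. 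Let $O_1,O_2$ be the outer functions with moduli $g_1,g_2$. Put $F_1=BSO_1\in H^{\Phi_1}$ and $F_2=O_2\in H^{\Phi_2}$. Then $O_1O_2=O$ because $g_1g_2=|f|$, so $F=F_1F_2$, and step 2 gives $\|F_1\|\,\|F_2\|\lesssim\lambda\approx\|F\|$.
\end{enumerate}

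\textbf{Main obstacle.} I expect the hardest part to be step 1--2. In the convex case ($\mathscr{U}$) it follows from subharmonicity of $\Phi(|F|)$ and the least harmonic majorant. In the concave case ($\mathscr{L}$), $\Phi(|F|)$ need not be subharmonic. There I would first establish factorization in $H^{p}$ for the lower-type exponent $p$: write $\Phi(t)=\tilde\Phi(t^p)$ with $\tilde\Phi$ essentially convex, up to equivalence, using $\Phi(st)\le Ct^p\Phi(s)$. Then $\tilde\Phi(|F|^p)$ is subharmonic, since $|F|^p$ is subharmonic and $\tilde\Phi$ is convex and increasing. This gives $\Phi(|F(x+iy)|)\le P_y*\Phi(|f|)$ for outer $F$, which yields the norm control. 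Passing from the Luxemburg norm to the modular via types is routine.
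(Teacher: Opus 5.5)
Your proposal is correct and follows essentially the same route as the paper. The direct inclusion comes from the pointwise Hölder-type inequality; the paper takes this from Volberg--Tolokonnikov's characterization $L^{\Phi_3}=L^{\Phi_1}\cdot L^{\Phi_2}$ rather than proving it. The converse uses the inner--outer factorization, with the outer part split as a constant times $\Phi_k^{-1}\circ\Phi_3(|f|/\|f\|)$ and the inner factor attached to one of the two pieces.

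The differences are cosmetic. You handle $\sim$ directly by setting $g_2:=|f|/g_1$, whereas the paper assumes without loss of generality that $\Phi_3^{-1}=\Phi_1^{-1}\Phi_2^{-1}$ and uses the split $\lambda_k=\Phi_k^{-1}(\Phi_3(\lambda))$. You also sketch the boundary-value and factorization facts that the paper simply cites.
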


When  $\Phi_{j}(t)=t^{p_{j}}$ with  $0< p_{j}  < \infty$ and $1/p_{3}= 1/p_{1} + 1/p_{2}$, Theorem \ref{pro:main2aqop} coincides with a classical result by Riesz for Hardy spaces (see \cite{wrudin}). 

\medskip

To establish the strong factorization of Hardy--Orlicz spaces on the complex upper half-plane, that is, to prove Theorem~\ref{pro:main2aqop}, we rely on the same strategy and arguments as those developed on the unit disc by the first two authors in \cite{djefeuto1}. We establish an inner-outer functions factorization of functions in the holomorphic Hardy-Orlicz spaces that we then use to prove the above result.

As in the unit disc setting, a natural application of this factorization result is the control of the Hankel operator, which will be also investigated in this work. For this, we introduce few needed spaces, and study some properties of these spaces and Hardy-Orlicz spaces of the real line and transformations acting from these spaces to their holomorphic analogues.

\medskip

We denote by $\mathscr{S}(\mathbb{R})$ the Schwartz space on $\mathbb{R}$ and by $\mathscr{S}'(\mathbb{R})$ its dual space, namely the space of tempered distributions.

\medskip

A distribution $f \in \mathscr{S}'(\mathbb{R})$ is said to be \emph{bounded} if, for every $\varphi \in \mathscr{S}(\mathbb{R})$, the convolution $\varphi \ast f$ belongs to $L^{\infty}(\mathbb{R})$.

\medskip

Let $m \in \mathbb{N}$. For $f \in \mathscr{S}'(\mathbb{R})$, the non-tangential grand maximal function $f_m^*$ is defined, for all $x \in \mathbb{R}$, by
\[
f_m^*(x) := \sup_{\varphi \in \mathcal{S}_m(\mathbb{R})} \sup_{t>0} \sup_{|y-x|<t} \left| \varphi_t \ast f(y) \right|,
\]
where, for $t>0$, $\varphi_t(\cdot) := \frac{1}{t}\varphi\!\left(\frac{\cdot}{t}\right)$ and
\[
\mathcal{S}_m(\mathbb{R}) := \left\{ \varphi \in \mathscr{S}(\mathbb{R}) : 
\mathcal{N}_m(\varphi) := \sup_{x \in \mathbb{R}} \sup_{0 \leq k \leq m+1} (1+|x|)^{2(m+2)} |\partial^k \varphi(x)| \leq 1 \right\}.
\]

\medskip

Let $\Phi \in \mathscr{L}_p$ with $0 < p \leq 1$. The Hardy--Orlicz space on $\mathbb{R}$, denoted by $H^\Phi(\mathbb{R})$, is defined as the set of all $f \in \mathscr{S}'(\mathbb{R})$ such that $f_{m_\Phi}^* \in L^\Phi(\mathbb{R})$. In this case, we define
\[
\|f\|_{H^\Phi(\mathbb{R})}^{\mathrm{lux}} := \left\| f_{m_\Phi}^* \right\|_{L^\Phi(\mathbb{R})}^{\mathrm{lux}},
\]
where the integer $m_\Phi$ is given by
\begin{equation}\label{eq:dqtap2}
m_\Phi := \lfloor 1/p \rfloor - 1,
\end{equation}
and $\lfloor \cdot \rfloor$ denotes the integer part.

\medskip

In what follows, whenever $\Phi \in \mathscr{L}$, the quantity $m_\Phi$ always refers to the definition given in \eqref{eq:dqtap2}.

\medskip

Let $s \in \mathbb{N}$ and $\Phi \in \mathscr{L}$. A measurable function $a$ is called a \emph{$(\Phi,s)$-atom} if it satisfies the following conditions:
\begin{itemize}
\item[(i)] there exists a finite interval $I \subset \mathbb{R}$ such that $\mathrm{supp}(a) \subset I$;
\item[(ii)] $a \in L^2(\mathbb{R})$ and $\|a\|_{L^2} \leq |I|^{1/2}\,\Phi^{-1}\!\left(\frac{1}{|I|}\right)$;
\item[(iii)] $\displaystyle \int_{\mathbb{R}} a(x)\,x^k\,dx = 0$ for all $0 \leq k \leq s$.
\end{itemize}

\medskip

The orthogonal projection from $L^2(\mathbb{R})$ onto $H^2(\mathbb{C}_+)$ is called the \emph{Szeg\"o projection} and is denoted by $\mathcal{P}_S$. It is given by
\[
\mathcal{P}_S(f)(z) := \frac{1}{2\pi i} \int_{\mathbb{R}} \frac{f(t)}{t - z}\, dt, \quad \forall z \in \mathbb{C}_+.
\]

\medskip

The following result provides an atomic decomposition of Hardy--Orlicz spaces on the complex upper half-plane.

\begin{theorem}\label{pro:main6maqmq4}
Let $\Phi \in \mathscr{L}$ and let $F$ be an analytic function on $\mathbb{C}_+$. The following assertions are equivalent:
\begin{itemize}
\item[(i)] $F \in H^\Phi(\mathbb{C}_+)$;
\item[(ii)] there exist a bounded tempered distribution $f$ on $\mathbb{R}$ and a sequence of $(\Phi,s)$-atoms $(a_j)_j$, with $s \in \mathbb{N}$ satisfying $s \geq m_\Phi$, such that
\[
f = \sum_{j=0}^\infty a_j \quad \text{in } H^\Phi(\mathbb{R}),
\]
and, for all $z \in \mathbb{C}_+$,
\[
F(z) = \mathcal{P}_S(f)(z) = \sum_{j=0}^\infty \mathcal{P}_S(a_j)(z).
\]
\end{itemize}
Moreover,
\[
\|F\|_{H^\Phi(\mathbb{C}_+)}^{\mathrm{lux}} \approx \|f\|_{H^\Phi(\mathbb{R})}^{\mathrm{lux}}
\]
and
\[
\lim_{y \to 0} \|F(\cdot + i y) - f\|_{H^\Phi(\mathbb{R})}^{\mathrm{lux}} = 0.
\]
\end{theorem}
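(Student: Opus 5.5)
The plan is to pass through boundary values and to rely on the real-variable theory of $H^\Phi(\mathbb{R})$ for concave $\Phi\in\mathscr{L}_p$: its atomic decomposition is available, as in Nakai--Sawano or Yang--Liang--Ky.

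\emph{Step 1: (i) $\Rightarrow$ boundary distribution.} Let $F\in H^\Phi(\mathbb{C}_+)$. Since $\Phi\in\mathscr{L}_p$ is of lower type $p$, we have $\Phi(t)\gtrsim t^p$ for $t\ge 1$. Combined with the subharmonicity of $|F|^{q}$ for small $q<p$, this gives the pointwise estimate
\[
|F(x+iy)|\lesssim \Phi^{-1}(1/y)\,\|F\|^{lux}_{H^\Phi}.
\]
So $F(\cdot+iy)$ is bounded for each $y>0$. The family $F(\cdot+iy)$ is then bounded in $\mathscr{S}'$, and it converges as $y\to0$ to a tempered distribution $f$. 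This follows the classical Fefferman--Stein argument: write $F(\cdot+i(y+\epsilon))=P_\epsilon * F(\cdot+iy)$ and use the Poisson semigroup. Because $F(\cdot+i\epsilon)=P_\epsilon*f$ is bounded, $f$ is a bounded distribution.

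\emph{Step 2: control of the maximal function.} Next I show that $f^*_{m_\Phi}\lesssim \big(\mathcal{M}(|F^*_{nt}|^{q})\big)^{1/q}$, where $F^*_{nt}$ is the nontangential maximal function of $F$ and $\mathcal{M}$ is the Hardy--Littlewood maximal function. This uses the standard comparison between grand and Poisson maximal functions for bounded distributions. Since $|F|^q$ has a harmonic majorant on each half-plane $\{y>\epsilon\}$, we get $\int\Phi(F^*_{nt}/\lambda)\lesssim\sup_y\int \Phi(|F(\cdot+iy)|/\lambda)$. This is the maximal inequality for $\Phi(t^{1/q})$, which has lower type $p/q>1$ and so lets $\mathcal{M}$ be bounded on the corresponding Orlicz space. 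It yields $\|f\|_{H^\Phi(\mathbb{R})}\lesssim\|F\|_{H^\Phi(\mathbb{C}_+)}$.

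I then invoke the atomic decomposition of $H^\Phi(\mathbb{R})$ with $(\Phi,s)$-atoms, $s\ge m_\Phi$, so that $f=\sum a_j$ in $H^\Phi(\mathbb{R})$. It remains to identify $F=\mathcal{P}_S f$. I show that $\mathcal{P}_S$ is continuous from $H^\Phi(\mathbb{R})$ to $H^\Phi(\mathbb{C}_+)$, by checking $\|\mathcal{P}_S a\|_{H^\Phi(\mathbb{C}_+)}\lesssim1$ uniformly on atoms. Near the interval $2I$ this uses $L^2$ boundedness; away from $2I$ it uses the moment conditions and a Taylor expansion of the Cauchy kernel, which gives decay $|I|^{s+3/2}\Phi^{-1}(1/|I|)/|x-c_I|^{s+2}$. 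Summing with the concavity and lower-type properties of $\Phi$, we get $\mathcal{P}_S(f)=\sum\mathcal{P}_S(a_j)$ pointwise. Since $f$ is the boundary value of the analytic $F$, its Fourier transform is supported in $[0,\infty)$, so $\mathcal{P}_S f=P_y*f=F$.

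\emph{Step 3: (ii) $\Rightarrow$ (i), norm equivalence and convergence.} Given (ii), $F=\mathcal{P}_S f$ lies in $H^\Phi(\mathbb{C}_+)$ with $\|F\|\lesssim\|f\|_{H^\Phi(\mathbb{R})}$ by the atomic estimate above. For the reverse inequality, note that $F(\cdot+iy)=P_y*f$ for the analytic part, and $|P_y*f|\le C f^*_{m_\Phi}$ pointwise; together with Step 2 this gives the norm equivalence. For the limit, observe that $F(\cdot+iy)-f=P_y*f-f$. This tends to $0$ in $H^\Phi(\mathbb{R})$ because it holds for finite sums of atoms, and $\sup_y\|P_y*g\|_{H^\Phi(\mathbb{R})}\lesssim\|g\|_{H^\Phi(\mathbb{R})}$; a density argument using the quasi-triangle inequality for $\Phi$ concave then finishes it.

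The main obstacle I expect is Step 2: proving the maximal estimate and, above all, the identification $F=\mathcal{P}_S f$ when $p<1$. There $\mathcal{P}_S$ is not given by an absolutely convergent integral on $f$, so the identification must be made via atoms and the boundedness of $f$.
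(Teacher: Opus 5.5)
Your proposal is correct in outline and has the same architecture as the paper's proof. The paper's proof simply chains Theorems~\ref{pro:main6mmq4}, \ref{pro:main6mmq4p}, \ref{pro:main5aq5} and the atomic decomposition of Proposition~\ref{pro:mainamp}: a boundary distribution $f\in H^\Phi(\mathbb{R})$ with $\|f\|_{H^\Phi(\mathbb{R})}\lesssim\|F\|_{H^\Phi(\mathbb{C}_+)}$, then atoms, then boundedness of $\mathcal{P}_S$ on atoms via a near/far splitting and a Taylor expansion of the Cauchy kernel (Lemma~\ref{pro:main2aaqop}), then the identification $F=\mathcal{P}_S f$.

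\textbf{Identification.} This is where you genuinely differ. The paper writes $P_y(x-t)=\frac{1}{2\pi i}\bigl(\frac{1}{t-z}-\frac{1}{t-\overline z}\bigr)$ and proves $\int_{\mathbb{R}}F(t+i\beta)(t-\overline z)^{-1}\,dt=0$ by Cauchy's theorem on truncated discs (Lemma~\ref{pro:main5aqq5}). It then lets $\beta\to0$, using the continuity of $f\mapsto\int f(t)(t-\overline z)^{-1}dt$ on $H^\Phi(\mathbb{R})$ (Corollary~\ref{pro:main5aqpq5}) and the norm convergence $F(\cdot+i\beta)\to f$. You argue spectrally instead. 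Each $F(\cdot+i\varepsilon)$ is an $H^\infty$ boundary function, so its spectrum lies in $[0,\infty)$, and this survives $\mathscr{S}'$-limits. Your route therefore needs only $\mathscr{S}'$-convergence and decouples the identification from the norm convergence. The paper's route stays within complex analysis but must establish the norm convergence first.

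\textbf{Convergence and maximal control.} The paper shows $(P_t*f-f)^*_P\to0$ a.e. and applies dominated convergence (Proposition~\ref{pro:mainqaq2aop}). You use density of finite atomic sums plus $\sup_y\|P_y*g\|\lesssim\|g\|$; both work. Likewise, you re-derive the maximal control through Stein's pointwise chain and the Orlicz maximal inequality for $\Phi(t^{1/q})$. The paper instead cites the Poisson-maximal characterization and Lemma~\ref{pro:main6aapaqmaqq4k}.

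\textbf{Points to fix.}

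(a) The step $\mathrm{supp}\,\hat f\subset[0,\infty)\Rightarrow\mathcal{P}_S f=P_y*f$ carries the real work, because $\mathcal{P}_S$ on $H^\Phi(\mathbb{R})$ is defined only by extension from atoms. Use $\mathcal{P}_S a=P_y*\tfrac12(a+i\mathcal{H}a)$ on atoms and the boundedness of $\mathcal{H}$ on $H^\Phi(\mathbb{R})$. Also use that $\hat f=\sum_j\hat a_j$ is a continuous function vanishing at $0$; the series converges locally uniformly thanks to the moment conditions. Then $\widehat{\mathcal{H}f}=-i\,\mathrm{sgn}(\xi)\hat f$ holds pointwise, so $\mathcal{H}f=-if$ and $\mathcal{P}_S f=P_y*f$.

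(b) The far-region decay should be $|I|^{s+2}\Phi^{-1}(1/|I|)\,|x-c_I|^{-(s+2)}$, not $|I|^{s+3/2}$, since $\|a\|_{L^1}\le|I|^{1/2}\|a\|_{L^2}\le|I|\Phi^{-1}(1/|I|)$. Summability over dyadic annuli is then $(s+2)p>1$, i.e. $s\ge m_\Phi$. What you must sum is not a uniform norm bound on atoms but the modular estimate $\int\Phi(|\mathcal{P}_S a|/\lambda)\lesssim|I|\,\Phi(\|a\|_{L^2}|I|^{-1/2}/\lambda)$, for all $\lambda$ and all multiples of atoms.

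(c) Uniform $\mathscr{S}'$-boundedness of $F(\cdot+iy)$, and boundedness of $f$, follow from the $H^\Phi(\mathbb{R})$ bound of Step~2 via Proposition~\ref{pro:maaqqaq5}. They do not follow from the pointwise bound $\Phi^{-1}(1/y)$, which blows up as $y\to0$. So Steps~1 and~2 need to be reordered.

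(d) The two ``moreover'' claims hold only for the boundary distribution of $F$, not for an arbitrary $f$ as in (ii): if $\hat f$ is supported in $(-\infty,0]$, then $\mathcal{P}_S f=0$. Your Step~3, which uses $F(\cdot+iy)=P_y*f$, implicitly makes that choice, as does the paper.
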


To prove the above result, we first rely on the atomic decomposition of the Hardy--Orlicz space on $\mathbb{R}$, $H^\Phi(\mathbb{R})$ (see \cite{yangLiangKy}). Then, we use the fact that the Szeg\"o projection $\mathcal{P}_S$ is continuous and bounded from $H^\Phi(\mathbb{R})$ into $H^\Phi(\mathbb{C}_{+})$ in order to obtain the atomic decomposition of $H^\Phi(\mathbb{C}_{+})$.
We note that atomic decompositions of Hardy--Orlicz spaces have already been obtained by several authors in the case of the unit ball of $\mathbb{C}^n$ (see \cite{BoGre}, \cite{BoGreseh}). In these works, they are typically used to establish weak factorization results in Hardy--Orlicz spaces.

\medskip

 W wille exploit the above atomic decomposition of Hardy--Orlicz spaces to characterize the topological dual of the Hardy--Orlicz space \(H^{\Phi}(\mathbb{C}_{+})\) on the complex upper half-plane in the case where the growth function \(\Phi\) is concave. This will be then combined with the strong factorization of Hardy-Orlicz spaces (Theorem~\ref{pro:main2aqop}) to characterize bounded Hankel operators between different Hardy--Orlicz spaces.

\medskip

A locally integrable function $f$ on $\mathbb{R}$ is said to belong to the  $\mathrm{BMO}(\mathbb{R})$ space if
\begin{equation}\label{eq:dqtmqaaqa2}
\|f\|_{\mathrm{BMO}}:= \sup_{I \subset \mathbb{R}} \frac{1}{|I|}\int_{I}|f(x)-m_{I}(f)|dx  < \infty,
\end{equation}
where  the supremum is taking on all intervals $I \subset \mathbb{R}$ and  $m_{I}(f):= \frac{1}{|I|}\int_{I}f(s)ds$.  Here, for any
   measurable set $E$, $|E|$ denotes the Lebesgue measure of $E$. We emphasize that \(f\in \mathrm{BMO}(\mathbb{R})\) represents an equivalence class and $f=g$ if and only if $f-g$ is constant. Every function \(f\in \mathrm{BMO}(\mathbb{R})\) is Poisson integrable and therefore belongs to the space of tempered distributions \(\mathcal S'\).   
  It is well known that \(\mathrm{BMO}(\mathbb{R})\) is isomorphic to the dual space of \(H^{1}(\mathbb{R})\): for every bounded linear functional $\Lambda \in \bigl(H^{1}(\mathbb{R})\bigr)^{*}$, there exists a unique function $g\in \mathrm{BMO}(\mathbb{R})$ such that
\[
\Lambda(f)
=
\langle f,g\rangle
:=
\int_{\mathbb{R}}
f(x)g(x)\,dx,
\]
for every \(f\in H^{1}(\mathbb{R})\).

An analytic function $F$ on $\mathbb{C}_{+}$ belongs to  $BMOA(\mathbb{C}_{+})$ if and only if 
 $F$ can be written as
\begin{equation}\label{eq:dqtmqama2}
 F(x+iy) =P_{y}\ast f (x),
\end{equation}
where $f$ belongs to $BMO(\mathbb{R})$ and its Fourier transform \(\hat{f}\) is supported in $[0, \infty)$ and, $P_{y}$ is the Poisson Kernel, (see \cite{BoKy}).

Similarly, \(\mathrm{BMOA}(\mathbb{C}_{+})\) can be identified with the dual space of the  space \(H^{1}(\mathbb{C}_{+})\). In this article, we endow both \(\mathrm{BMO}(\mathbb{R})\) and \(\mathrm{BMOA}(\mathbb{C}_{+})\) with the dual norm.

%Since the Fourier transform of $f$ is supported in %$[0,\infty)$, it follows that, for every $z = x+iy \in %\mathbb{C}_{+}$,
%\[
%F(x+iy) = (P_y * f)(x) = \mathcal{P}_S(f)(z),
%\]
%where $\mathcal{P}_S$ denotes the Szeg\"o projection. In this case, we define the $BMOA$ norm of $F$ by
%\[
%\|F\|_{BMOA} := \inf \left\{ \|f\|_{BMO} : F = \mathcal{P}_S(f),\; f \in BMO(\mathbb{R}) \right\}.
%\]
\medskip

For any $s \in \mathbb{N}$, recall that $\mathcal{P}_{s}(\mathbb{R})$ denotes the polynomials with order not bigger than $s$. Assume that $f$ is a
 locally integrable function on $\mathbb{R}$. For any interval $I \subset  \mathbb{R}$ and  $s \in \mathbb{N}$, let $\mathcal{P}_{I}^{s}(f)$ be the
 minimizing polynomial $P \in \mathcal{P}_{s}(\mathbb{R})$ on $I$ such that, for all $Q \in \mathcal{P}_{s}(\mathbb{R})$, 
$$ \int_{I} (f(x) -P(x))Q(x)dx=0.  $$

Let  $s \in \mathbb{N}$ and  $\Phi$ a growth function of lower type. A locally integrable function $f$ on $\mathbb{R}$ is said to belong to the Campanato-Orlicz space   $\mathcal{L}^{\Phi,s}(\mathbb{R})$  if
$$  \|f\|_{\mathcal{L}^{\Phi,s}(\mathbb{R})}:= \sup_{I \subset \mathbb{R}} \left(\Phi^{-1}\left(\frac{1}{|I|}\right)\int_{I}|f(x)-\mathcal{P}_{I}^{s}(f)(x)|dx \right) < \infty,   $$
  where  the supremum is taking on all intervals $I \subset \mathbb{R}$.  

An analytic function $F$ on $\mathbb{C}_{+}$ is said to belong to the Campanato Hardy--Orlicz space on the upper half-plane, denoted by $\mathcal{H}\mathcal{L}^{\Phi,s}(\mathbb{C}_{+})$, if there exists a function $f \in \mathcal{L}^{\Phi,s}(\mathbb{R})$ whose Fourier transform is supported in $[0,\infty)$ such that
\[
F(x+iy) = (P_y * f)(x), \quad \forall\, x+iy \in \mathbb{C}_{+}.
\]
Equivalently, $\mathcal{H}\mathcal{L}^{\Phi,s}(\mathbb{C}_{+})$ can be defined as
\[
\mathcal{H}\mathcal{L}^{\Phi,s}(\mathbb{C}_{+}) := \mathcal{P}_S\big(\mathcal{L}^{\Phi,s}(\mathbb{R})\big),
\]
that is, the space of analytic functions $F$ on $\mathbb{C}_{+}$ that can be written in the form
\[
F = \mathcal{P}_S(f),
\]
for some $f \in \mathcal{L}^{\Phi,s}(\mathbb{R})$. This space is endowed with the quasi-norm
\[
\|F\|_{\mathcal{H}\mathcal{L}^{\Phi,s}} := \inf \left\{ \|f\|_{\mathcal{L}^{\Phi,s}} : F = \mathcal{P}_S(f),\; f \in \mathcal{L}^{\Phi,s}(\mathbb{R}) \right\}.
\]
If  $\Phi(t) = t$ and $\mathcal{P}_{I}^{s}(f)
   =m_{I}(f)$ then   $\mathcal{L}^{\Phi,s}(\mathbb{R})$ and   $\mathcal{H}\mathcal{L}^{\Phi,s}(\mathbb{C}_{+})$ coincide respectively with  $BMO(\mathbb{R})$ and  $BMOA(\mathbb{C}_{+})$ spaces. 

We have the following duality result.
\begin{theorem}\label{pro:main2aaop}
Let $\Phi \in \mathscr{L}$ and let $s \in \mathbb{N}$ be such that $s \geq m_{\Phi}$. Then the topological dual space of $H^{\Phi}(\mathbb{C}_{+})$, denoted by $\big(H^{\Phi}(\mathbb{C}_{+})\big)^{*}$, can be identified with the Campanato Hardy--Orlicz space $\mathcal{H}\mathcal{L}^{\Phi,s}(\mathbb{C}_{+})$. More precisely, for every $T \in \big(H^{\Phi}(\mathbb{C}_{+})\big)^{*}$, there exists a unique function $G \in \mathcal{H}\mathcal{L}^{\Phi,s}(\mathbb{C}_{+})$ such that, for all $F \in H^{\Phi}(\mathbb{C}_{+})$,
\[
T(F) := \lim_{y \to 0} \int_{\mathbb{R}} F(x+iy)\,\overline{G(x+iy)}\, dx.
\]

Moreover, the following norm equivalence holds:
\[
\|G\|_{\mathcal{H}\mathcal{L}^{\Phi,s}} \approx \sup \left\{ |T(F)| : F \in H^{\Phi}(\mathbb{C}_{+}),\ \|F\|_{H^{\Phi}(\mathbb{C}_{+})}^{\mathrm{lux}} \leq 1 \right\}.
\]
\end{theorem}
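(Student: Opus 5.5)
The plan is to transfer the known real-line duality $(H^{\Phi}(\mathbb{R}))^{*}\cong \mathcal{L}^{\Phi,s}(\mathbb{R})$ to the upper half-plane. The transfer uses Theorem~\ref{pro:main6maqmq4}, which identifies $H^{\Phi}(\mathbb{C}_{+})$ isomorphically with the closed subspace $H^{\Phi}_{+}(\mathbb{R})$ of $H^{\Phi}(\mathbb{R})$. This subspace consists of the boundary distributions $f$ with $F=\mathcal{P}_S(f)$, and the isomorphism comes with the norm equivalence $\|F\|^{lux}_{H^\Phi(\mathbb{C}_+)}\approx\|f\|^{lux}_{H^\Phi(\mathbb{R})}$. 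The real-line duality is available in the literature through the atomic decomposition of \cite{yangLiangKy}: for $s\ge m_\Phi$, every $g\in\mathcal{L}^{\Phi,s}(\mathbb{R})$ defines a bounded functional on finite sums of $(\Phi,s)$-atoms, and conversely every functional arises this way. I would first record this real-variable duality as a lemma. Its proof runs as follows. For an atom $a$ supported in $I$, the vanishing moments give
\[
\Bigl|\int a\,\overline{g}\Bigr|=\Bigl|\int_I a\,\overline{(g-\mathcal{P}^s_I g)}\Bigr|.
\]
This quantity is controlled by $\|g\|_{\mathcal{L}^{\Phi,s}}$, using the $L^2$ form of the Campanato condition, which holds by a John--Nirenberg type equivalence, or more simply by using $L^\infty$ atoms. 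Concavity of $\Phi$, i.e.\ $\Phi\in\mathscr{L}$ with lower type $p\le1$, then gives the $p$-type summation
\[
\Bigl|\sum_j\lambda_j\langle a_j,g\rangle\Bigr|\lesssim \Lambda\bigl(\{\lambda_ja_j\}\bigr)\,\|g\|,
\]
where the quasi-norm $\Lambda$ is equivalent to $\|f\|_{H^\Phi(\mathbb{R})}$.

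\textbf{Sufficiency.} Let $G=\mathcal{P}_S(g)$ with $g\in\mathcal{L}^{\Phi,s}(\mathbb{R})$, and define $T(F)=\lim_{y\to0}\int F(x+iy)\overline{G(x+iy)}\,dx$. First I check this for $F=\mathcal{P}_S(a)$ with $a$ a finite sum of atoms, where everything lies in $L^2$. By the semigroup property of the Poisson kernel, $\int F_y\overline{G_y}\,dx=\langle P_{2y}*a,\,g\rangle$. This converges to $\langle a,g\rangle$, because $P_{2y}*a\to a$ in $H^\Phi(\mathbb{R})$ and the pairing with $g$ is continuous. Density of finite atomic sums, together with the limit statement $\lim_{y\to0}\|F(\cdot+iy)-f\|_{H^{\Phi}(\mathbb{R})}=0$ in Theorem~\ref{pro:main6maqmq4}, then extends the definition to all $F$. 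The bound $|T(F)|\lesssim\|g\|_{\mathcal{L}^{\Phi,s}}\|F\|$ follows, and taking the infimum over representatives $g$ gives one direction of the norm equivalence. In this step one also checks that $\langle f,g\rangle$ depends only on $\mathcal{P}_S g$ when $f\in H^\Phi_+$, since $\langle f, g-\mathcal{P}_Sg\rangle=0$ for analytic boundary values.

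\textbf{Necessity.} Given $T\in(H^\Phi(\mathbb{C}_+))^*$, the map $f\mapsto T(\mathcal{P}_S f)$ is a bounded functional on $H^\Phi_+(\mathbb{R})$. I extend it to all of $H^\Phi(\mathbb{R})$ by composing with the Riesz-type projection $f\mapsto\mathcal{P}_S f$, whose boundedness on $H^\Phi(\mathbb{R})$ follows from the boundedness of $\mathcal{P}_S$ quoted after Theorem~\ref{pro:main6maqmq4}; note that this avoids Hahn--Banach, which fails for quasi-Banach spaces. The real duality then yields $g\in\mathcal{L}^{\Phi,s}(\mathbb{R})$ with $\|g\|\lesssim\|T\|$, and we set $G=\mathcal{P}_S g$. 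Uniqueness holds because $T(\mathcal{P}_S(a))$ determines $\langle a,\mathcal{P}_S g\rangle$ on all atoms, hence determines $G$ modulo the polynomials of degree at most $s$. Those polynomials are killed in the quotient, and under $\mathcal{P}_S$ and the Poisson extension they vanish or are constants.

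The main obstacle I expect is making $\mathcal{P}_S(g)$ and the pairing rigorous for $g\in\mathcal{L}^{\Phi,s}$. Such $g$ may grow polynomially, so the Cauchy integral need not converge absolutely. I would handle this by defining $\mathcal{P}_S g$ modulo polynomials through the Poisson or Fourier-side definition, via $\hat g$ restricted to $[0,\infty)$, and by working only with pairings against atoms, whose moment conditions absorb the polynomial ambiguity. Tracking the limit $y\to0$ within this framework is the delicate technical step.
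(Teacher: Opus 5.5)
Your plan is essentially the paper's proof. The paper also sets $\widetilde T=T\circ\mathcal{P}_S$ on $H^\Phi(\mathbb{R})$, which is bounded because the Szeg\"o projection is bounded. It takes $g\in\mathcal{L}^{\Phi,s}(\mathbb{R})$ from the real-line duality and puts $G=\mathcal{P}_S(g)$. It then identifies $T(F)=\int f\overline{g}$ with $\lim_{y\to0}\int F(x+iy)\overline{G(x+iy)}\,dx$ via the Poisson semigroup and $P_t*f\to f$ in $H^\Phi(\mathbb{R})$, and the same computation gives the converse bound.

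Your attention to uniqueness modulo polynomials, and to defining $\mathcal{P}_S(g)$ for polynomially growing $g$, goes beyond what the paper checks. One small correction: for $F=\mathcal{P}_S(a)$ with $a$ a sum of atoms, the semigroup identity gives $\langle P_{2y}*a,g_+\rangle$, where $g_+$ is the boundary value of $G$, not $\langle P_{2y}*a,g\rangle$. Your remark $\langle f,g-\mathcal{P}_Sg\rangle=0$ is exactly what reconciles the two.
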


Let $b \in H^{2}(\mathbb{C}_{+})$. The Hankel operator with symbol $b$ is defined, for any bounded holomorphic function $g$ on $\mathbb{C}_{+}$, by
\[
h_{b}(g) := \mathcal{P}_{S}(b\, \overline{g}),
\]
where $\mathcal{P}_{S}$ denotes the Szeg\"o projection.

Let $\Phi_{1}$ and $\Phi_{2}$ be two growth functions of lower type, and let $b$ be a holomorphic function on $\mathbb{C}_{+}$. We say that the Hankel operator $h_{b}$ is bounded from $H^{\Phi_{1}}(\mathbb{C}_{+})$ to $H^{\Phi_{2}}(\mathbb{C}_{+})$ if there exists a constant $C>0$ such that
\begin{equation}\label{eq:ineitedaaqpqqehay}
\|h_{b}(g)\|_{H^{\Phi_{2}}(\mathbb{C}_{+})}^{\mathrm{lux}} \leq C \|g\|_{H^{\Phi_{1}}(\mathbb{C}_{+})}^{\mathrm{lux}}, \quad \forall\, g \in H^{\Phi_{1}}(\mathbb{C}_{+}).
\end{equation}
The operator norm of $h_b$ is defined by
\[
\|h_{b}\|:= \sup \left\{ \|h_{b}(g)\|_{H^{\Phi_{2}}(\mathbb{C}_{+})}^{\mathrm{lux}} : \|g\|_{H^{\Phi_{1}}(\mathbb{C}_{+})}^{\mathrm{lux}} \leq 1 \right\}.
\]

For $j \in \{1,2\}$, let $\Phi_{j}$ be a growth function of lower type $p_{j}$ and upper type $q_{j}$. 
Assume that either $0 < p_{2} \leq q_{2} < p_{1} < \infty$ or $0 < p_{1} \leq q_{1} < p_{2} < \infty$. 
If the operator $h_{b}: H^{\Phi_{1}}(\mathbb{C}_{+}) \to H^{\Phi_{2}}(\mathbb{C}_{+})$ is bounded, we say that $h_{b}$ satisfies a \emph{loss estimate} in the first case and a \emph{gain estimate} in the second case. In these situations, one obtains corresponding loss or gain estimates for the norm of $h_b$.

In the particular case where $\Phi_{j}(t) = t^{p_{j}}$, these definitions coincide with the classical notions of loss ($p_{2} < p_{1}$) and gain ($p_{1} < p_{2}$) estimates for Hankel operators.
Moreover, in the unit disc setting, these definitions are consistent with those introduced in \cite{djefeuto1}, namely that $H^{\Phi_{2}}(\mathbb{D}) \subset H^{\Phi_{1}}(\mathbb{D})$ in the loss case and $H^{\Phi_{1}}(\mathbb{D}) \subset H^{\Phi_{2}}(\mathbb{D})$ in the gain case. We have the following result.

\begin{theorem}\label{pro:mainfqmaqppaaqq5}
Let $\Phi_{1}, \Phi_{2}, \Phi_{3}$ be growth functions of lower type $p_{1}, p_{2}, p_{3}$, respectively. Assume that $\Phi_{2}$ is also of upper type $q_{2}$, with
$1 < p_{2} \leq q_{2} < \infty$. Let $b$ be a holomorphic function on $\mathbb{C}_{+}$. Then the following statements hold:
\begin{itemize}
\item[(i)] Assume that
$1 < p_{2} \leq q_{2} < p_{1} < \infty
\quad \text{and} \quad
\Phi_{3}^{-1}(t) \sim \frac{\Phi_{2}^{-1}(t)}{\Phi_{1}^{-1}(t)}$. Then the Hankel operator $h_{b} : H^{\Phi_{1}}(\mathbb{C}_{+}) \to H^{\Phi_{2}}(\mathbb{C}_{+})$ is bounded if and only if $b \in H^{\Phi_{3}}(\mathbb{C}_{+})$. Moreover,
\[
\|h_{b}\| \approx \|b\|_{H^{\Phi_{3}}}^{\mathrm{lux}}.
\]
\item[(ii)] If $\Phi\equiv\Phi_{1} \equiv \Phi_{2}$, then
$h_{b} : H^{\Phi}(\mathbb{C}_{+}) \to H^{\Phi}(\mathbb{C}_{+})$
is bounded if and only if $b \in BMOA(\mathbb{C}_{+})$. Moreover,
\[
\|h_{b}\| \approx \|b\|_{BMOA}.
\]

\item[(iii)] Assume that $0 < p_{1} \leq q_{1} \leq 1 < p_{2}
\quad \text{and} \quad
\Phi_{3}^{-1}(t) \sim \frac{t\,\Phi_{1}^{-1}(t)}{\Phi_{2}^{-1}(t)}$. Then the Hankel operator $h_{b} : H^{\Phi_{1}}(\mathbb{C}_{+}) \to H^{\Phi_{2}}(\mathbb{C}_{+})$ is bounded if and only if $b \in \mathcal{H}\mathcal{L}^{\Phi_{3},s}(\mathbb{C}_{+})$, for some $s \in \mathbb{N}$ such that $s \geq m_{\Phi_{3}}$. Moreover,
\[
\|h_{b}\| \approx \|b\|_{\mathcal{H}\mathcal{L}^{\Phi_{3},s}}.
\]
\end{itemize}
\end{theorem}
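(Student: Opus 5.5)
The plan rests on a duality identity that turns $h_b$ into a bilinear form, after which each case is a pairing of a strong factorization (Theorem~\ref{pro:main2aqop}) with a duality result. For bounded holomorphic $g$ and $f$ in a dense class of $H^{\Phi_2'}(\mathbb{C}_+)$ (here $\Phi_2'$ is the complementary function of $\Phi_2$; since $1<p_2\le q_2<\infty$, $L^{\Phi_2}$ is reflexive and $(H^{\Phi_2})^*\simeq H^{\Phi_2'}$ via the Szeg\"o pairing), we have
\[
\langle h_b(g), f\rangle=\lim_{y\to0}\int_{\mathbb{R}} b\,\overline{g}\,\overline{f}\,(x+iy)\,dx=\langle b, gf\rangle .
\]
This holds because $\mathcal{P}_S$ is self-adjoint and $f$ is analytic. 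Hence
\[
\|h_b\|\approx\sup\Bigl\{|\langle b,gf\rangle| : \|g\|_{H^{\Phi_1}}\le1,\ \|f\|_{H^{\Phi_2'}}\le1\Bigr\}.
\]
Boundedness of $\mathcal{P}_S$ on $L^{\Phi_2}(\mathbb{R})$, via Boyd indices strictly between $1$ and $\infty$, lets one pass between $\|h_b(g)\|_{H^{\Phi_2}}$ and $\|b\overline g\|$ tested against $H^{\Phi_2'}$. In every case the product $gf$ ranges over all of a space $H^{\Psi}$, with comparable norms, by Theorem~\ref{pro:main2aqop} applied with $\Psi^{-1}\sim\Phi_1^{-1}\cdot(\Phi_2')^{-1}$. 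Since $(\Phi_2')^{-1}(t)\approx t/\Phi_2^{-1}(t)$, this gives $\Psi^{-1}(t)\sim t\,\Phi_1^{-1}(t)/\Phi_2^{-1}(t)$. Consequently $\|h_b\|\approx\|b\|_{(H^{\Psi})^*}$, and it remains to identify $(H^\Psi)^*$ in each case.

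Case (i): $p_1>q_2$ makes $\Psi$ of lower type greater than $1$, and $\Psi^{-1}(t)=t/\Phi_3^{-1}(t)$, i.e.\ $\Psi$ is the complementary function of $\Phi_3$. Then $(H^\Psi)^*=H^{\Phi_3}$ by the reflexive Orlicz duality together with the Riesz projection, which gives $\|h_b\|\approx\|b\|_{H^{\Phi_3}}$. Sufficiency could also be shown directly: by H\"older in Orlicz spaces, $\|b\overline g\|_{L^{\Phi_2}}\lesssim\|b\|_{L^{\Phi_3}}\|g\|_{L^{\Phi_1}}$ since $\Phi_2^{-1}\sim\Phi_3^{-1}\Phi_1^{-1}$, and then $\mathcal{P}_S$ is bounded on $L^{\Phi_2}$.

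Case (ii): $\Psi^{-1}(t)\sim t$, so $H^\Psi=H^1(\mathbb{C}_+)$ and $(H^1)^*=BMOA$, which gives $\|h_b\|\approx\|b\|_{BMOA}$.

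Case (iii): $q_1\le1<p_2$ forces $\Psi$ to have upper type at most $1$, so after equivalence we may take $\Psi\in\mathscr{L}$ concave. Then Theorem~\ref{pro:main2aaop} gives $(H^\Psi)^*=\mathcal{H}\mathcal{L}^{\Psi,s}$, and since $\Psi\sim\Phi_3$ we obtain $\|h_b\|\approx\|b\|_{\mathcal{H}\mathcal{L}^{\Phi_3,s}}$.

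I expect the main obstacle to be the rigorous justification of the pairing identity and the passage from the dense class to the full spaces. One concern is that $h_b$ is defined only on bounded $g$. Another is that in case (iii) the pairing $\langle b,gf\rangle$ is defined through the limit in Theorem~\ref{pro:main2aaop} with $gf$ in a quasi-Banach space. The first approach I would try is to restrict to $g,f$ in $H^2\cap H^\infty$ with good decay, which are dense in the relevant Hardy--Orlicz spaces (this follows from Theorem~\ref{pro:main6maqmq4}). For such functions, Fubini and $\mathcal{P}_S^*=\mathcal{P}_S$ on $L^2$ make the identity elementary. The remaining step is to check that the factors produced by Theorem~\ref{pro:main2aqop} for a dense set of products can be chosen in this class. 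A secondary technical point is verifying that the type hypotheses place $\Psi$ in $\mathscr{L}\cup\mathscr{U}$, so that Theorem~\ref{pro:main2aqop} applies to $\Phi_1$, $\Phi_2'$ and $\Psi$.
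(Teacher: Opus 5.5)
Your proposal is correct and follows the paper's proof. Both use the identity $\langle h_b(g),f\rangle=\langle b,gf\rangle$ and the duality $(H^{\Phi_2})^*=H^{\Psi_2}$ (your $H^{\Phi_2'}$), together with strong factorization (Theorem~\ref{pro:main2aqop}) of $H^{\Psi}$ with $\Psi^{-1}=\Phi_1^{-1}\Psi_2^{-1}$; this $\Psi$ is the paper's $\Phi_4$ in (i) and $\Phi_3$ itself in (iii). Both then identify $(H^{\Psi})^*$ as $H^{\Phi_3}$, $BMOA(\mathbb{C}_+)$ or $\mathcal{H}\mathcal{L}^{\Phi_3,s}(\mathbb{C}_+)$ in the three cases.

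Your care in justifying the pairing on a dense class with bounded factors goes beyond what the paper writes. Note, however, that Theorem~\ref{pro:main6maqmq4} only gives such density for growth functions in $\mathscr{L}$, so the convex spaces involved need a separate, standard approximation argument. Your direct H\"older-plus-$L^{\Phi_2}$-boundedness argument for sufficiency in (i) is a valid shortcut.
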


We use the  abbreviation $\mathrm{ A}\lesssim \mathrm{ B}$ for inequalities $\mathrm{ A}\leq C\mathrm{ B}$, where $C$ is a positive constant independent of the main parameters. If $\mathrm{ A}\lesssim \mathrm{ B}$ and $\mathrm{ B}\lesssim \mathrm{ A}$, then we write $\mathrm{ A} \approx \mathrm{ B}$.
In all what follows, the letter $C$ will be used for non-negative constants independent of the relevant variables that may change from one occurrence to another. Constants with subscript, such as $C_{s}$, may also change in different occurrences, but depend on the parameters mentioned in it.

\section{Preliminaries.}

We present in this section some useful results needed in our presentation.

\subsection{Growth functions.}

We  recall here some fundamental properties of growth functions needed in our discussion.

\medskip

Let  $\Phi$ be a growth function. We say that  $\Phi$ satisfies the $\Delta_{2}-$condition (or $\Phi \in \Delta_{2}$) if there exists a constant $K > 1$ such that
\begin{equation}\label{eq:delta2}
\Phi(2t) \leq K \Phi(t),~ \forall~ t >  0.\end{equation}
We say also that  $\Phi$ satisfies the $\nabla_{2}-$condition (or $\Phi \in \nabla_{2}$) if there exists $C > 1$ such that 
\begin{equation}\label{eq:delmta2}
\Phi(t) \leq \frac{1}{2C} \Phi(Ct),~ \forall~ t >  0.\end{equation}

Let  $\Phi$ be a convex growth function such that  $\lim_{t \to 0}\frac{\Phi(t)}{t}=0$ and $\lim_{t \to \infty}\frac{\Phi(t)}{t}=+\infty$. There exists  $\varphi$ a positive, left continuous and non-decreasing function on $[0, \infty)$ such that $\varphi(0)=0$ and $\lim_{t \to \infty}\varphi(t)=\infty$ and, 
$$   \Phi(t)=\int_{0}^{t}\varphi(s)ds, ~~\forall~t\geq 0.
  $$
The complementary function of $\Phi$ is the function $\Psi$ defined by
$$ \Psi(s)=\sup_{t\geq 0}\{st-\Phi(t) \}, ~ \forall~  s \geq 0.       $$
Note that function  $\Psi$ has the same properties as function $\Phi$ and, the complementary of $\Psi$ is $\Phi$ (see \cite{raoren}). Put
$$  a_\Phi:=\liminf_{t\to \infty}\frac{t\varphi(t)}{\Phi(t)}
   \hspace*{1cm}\textrm{and} \hspace*{1cm} b_\Phi:=\limsup_{t\to \infty}\frac{t\varphi(t)}{\Phi(t)}, $$
 and similarly $a_\Psi$, $b_\Psi$ be defined.

\medskip

Most of the results presented below are taken from reference \cite{djefeuto1}, which we will list in the sequel.

\begin{lemme}\label{pro:maiaqaq1q8}
Let  $\Phi$ be a growth function of lower type $p\in (0, \infty)$. The following assertions are satisfied: 
\begin{itemize}
\item[(i)]  $\Phi$ is equivalent to a continuous and increasing  growth function of lower type $p$.
\item[(ii)] The growth function $\Phi_{p}$ defined by
\begin{equation}\label{eq:suiaqaq8n}
\Phi_{p}(t)=\Phi\left( t^{1/p}\right),~~\forall~t\geq 0\end{equation}
is equivalent to a continuous, increasing and convex growth function.
\item[(iii)]   If $p \geq 1$ then $\Phi$ is equivalent to a continuous, increasing and convex growth function.
\item[(iv)] If $\Phi$ is also of upper type $q$.  Then $\Phi$ is equivalent to a continuous, increasing, and convex growth function belonging to $\mathscr{U} \cap \nabla_{2}$ (resp. $\mathscr{L}$) if and only if $1<p \leq q< \infty$ (resp.  $0<p \leq q \leq 1$).
\item[(v)]   $\Phi \in \mathscr{L}_{p}$ if and only if  $\Phi^{-1} \in \mathscr{U}^{1/p}$.

\end{itemize}
\end{lemme}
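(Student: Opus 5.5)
The plan is to prove the five assertions of Lemma \ref{pro:maiaqaq1q8} in order, from elementary regularizations of $\Phi$ built out of the lower/upper type inequality \eqref{eq:sui8n}. Equivalence is always understood in the sense of \eqref{eq:equivalent}.

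For (i), I would define
$$\tilde\Phi(t)=\int_0^t \frac{\Phi(s)}{s}\,ds.$$
Lower type $p$ gives $\Phi(s)\le C(s/t)^p\Phi(t)$ for $s\le t$, hence $\tilde\Phi(t)\le (C/p)\Phi(t)$. Monotonicity gives $\tilde\Phi(t)\ge \int_{t/2}^t \Phi(s)/s\,ds\ge \log 2\,\Phi(t/2)$. So $\tilde\Phi\sim\Phi$, and $\tilde\Phi$ is continuous and strictly increasing. Its lower type follows from the substitution $s=u\lambda$:
$$\tilde\Phi(\lambda t)=\int_0^t\frac{\Phi(\lambda u)}{u}\,du\le C\lambda^p\tilde\Phi(t),\qquad \lambda\le 1.$$

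For (ii), note that $\Phi_p$ has lower type $1$. The key observation is that a function $\Psi$ of lower type $1$ satisfies $\Psi(s)/s\le C\Psi(t)/t$ for $s\le t$, so $\Psi(t)/t$ is almost increasing. I would set $\psi(t)=\sup_{s\le t}\Psi(s)/s$, so that $\Psi(t)/t\le\psi(t)\le C\Psi(t)/t$, and define
$$\Theta(t)=\int_0^t\psi(s)\,ds.$$
Since $\psi$ is nondecreasing, $\Theta$ is convex. Moreover $\Theta(t)\le t\psi(t)\le C\Psi(t)$ and $\Theta(t)\ge (t/2)\psi(t/2)\ge \Psi(t/2)/2$. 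Hence $\Theta\sim\Psi$, after first applying (i) to get continuity. Item (iii) is the same argument, since lower type $p\ge1$ implies lower type $1$ (because $t^p\le t$ on $[0,1]$).

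For (iv), I would first check necessity. If $\Phi$ is equivalent to an element of $\mathscr{U}\cap\nabla_2$, then the $\nabla_2$ condition iterated gives a lower type strictly larger than $1$, while $\mathscr{U}$ forces an upper type. These indices pass through equivalence, so the admissible type indices satisfy $1<p\le q<\infty$; the type indices are taken as the optimal ones. If instead $\Phi$ is equivalent to an element of $\mathscr{L}$, then the monotonicity of $t\mapsto\Phi(t)/t$ gives upper type $1$, and concavity is consistent only with $0<p\le q\le 1$.

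For sufficiency in the case $p>1$, I would take the convex $\Theta$ from (iii). It has upper type $q$, since $\psi(\lambda t)\lesssim \lambda^{q-1}\psi(t)$ for $\lambda\ge1$, which follows from the upper type of $\Phi$. It has $\nabla_2$ by the lower type $p>1$: choose $C$ with $C^{p-1}$ large, so that
$$\Theta(Ct)\ge c\,C^{p}\Theta(t)\ge 2C\,\Theta(t).$$
The function $\Theta(t)/t$ is nondecreasing by convexity. In the case $q\le1$, I would dualize, replacing the supremum by $\psi(t)=\inf_{s\le t}\Psi(s)/s$, to produce a concave equivalent function with $\Theta(t)/t$ nonincreasing.

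For (v), I would compute directly. With $u=\Phi(s)$, the lower type inequality $\Phi(\lambda s)\le C\lambda^p\Phi(s)$ for $\lambda\le1$ becomes, after taking inverses,
$$\Phi^{-1}(\mu u)\le (C\mu)^{1/p}\,\Phi^{-1}(u),\qquad \mu\ge 1,$$
which is upper type $1/p$. Monotonicity of $\Phi(t)/t$ nonincreasing is equivalent to $\Phi^{-1}(u)/u$ being nondecreasing, by the same substitution. The converse runs identically.

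The main obstacle is (iv), and specifically the bookkeeping of simultaneous types and $\nabla_2$ under the regularization. I expect to rely on the fact that both $\sup$ and $\inf$ regularizations preserve lower and upper type constants up to a factor.
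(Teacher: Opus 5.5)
The paper gives no proof of this lemma. It is one of the facts imported from \cite{djefeuto1}, so there is no in-paper argument to compare yours with. Your proposal is a correct, self-contained proof:
\begin{itemize}
\item The regularizations $\int_0^t \Phi(s)s^{-1}\,ds$ and $\int_0^t\psi(s)\,ds$, with $\psi(t)=\sup_{s\le t}\Psi(s)/s$ (resp.\ $\inf_{s\le t}$), give the two-sided bounds you state. Positivity of the integrand gives continuity and strict monotonicity.
\item The types transfer through $\psi$ as you indicate. For instance $\psi(\lambda t)\le C\lambda^{p-1}\psi(t)$ for $\lambda\le 1$, which is what justifies your $\nabla_{2}$ step, and $\psi(\lambda t)\lesssim \lambda^{q-1}\psi(t)$ for $\lambda\ge1$ gives the upper type.
\item The inversion computation in (v) is right in both directions.
\end{itemize}

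Several points should be made explicit rather than left implicit.

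\emph{Concave, not convex.} In the $\mathscr{L}$ case of (iv), ``convex'' must be read as ``concave'', as you did. A convex $\Theta$ with $\Theta(0)=0$ and $\Theta(t)/t$ nonincreasing is linear, so the literal statement already fails for $\sqrt t$.

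\emph{How to read the indices.} Types are not unique ($t^{2}$ is of lower type $1/2$), so the ``if and only if'' in (iv) needs a convention. The right one is existential: some lower type $p>1$, resp.\ some upper type $q\le 1$. Your ``optimal indices'' convention fails at the endpoint. Take $\Phi(t)=t\log(e+t)$: it has lower type $1$ and upper type $1+\varepsilon$ for every $\varepsilon>0$, so its optimal indices are $p=q=1$. Yet it is not of upper type $1$, hence not equivalent to any concave function. Your sufficiency argument actually uses a genuine upper type $q\le 1$. That is what makes $\Phi(t)/t$ almost decreasing, so that $C^{-1}\Phi(t)/t\le \inf_{s\le t}\Phi(s)/s\le \Phi(t)/t$. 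So it proves the correct, existential version.

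\emph{Transferring types.} In the necessity part, the paper's notion of equivalence dilates the argument. Hence ``types pass through equivalence'' requires a $\Delta_{2}$ bound for one of the two functions. That bound is available here from the upper types, but you should say so.

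\emph{Invertibility in (v).} Item (v) presupposes that $\Phi$ is continuous and strictly increasing, per the paper's standing convention in Section 2, so that $\Phi^{-1}$ is a genuine inverse.
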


\begin{lemme}[Lemma 9, \cite{djefeuto1}]\label{pro:mainfqmpq5}
Let  $\Phi_{1}$ and $\Phi_{2}$  be two growth functions of lower type  $p_{1}$ and $p_{2}$ respectively. Let  $\Phi_{3}$ be a positive function on $[0, \infty)$ such that $\Phi_{3}^{-1} = \Phi_{1}^{-1}.\Phi_{2}^{-1},$ where  $\Phi_{j}^{-1}$ is the inverse function of  $\Phi_{j}$, for  $j\in \{1,2,3\}$. 
Then the function $\Phi_{3}$ is a growth function of lower type $r:=\left( 1/p_{1}+1/p_{2}  \right)^{-1}$. 
If, moreover, one of the functions  $\Phi_{1}$ or  $\Phi_{2}$ is of upper type $q$ then $\Phi_{3}$ is also of upper type $q$. In this case we have $0<r\leq q <\infty$.
\end{lemme}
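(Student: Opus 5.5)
The plan is to work directly with the defining inequality of the types, rewritten in terms of inverse functions. First, $\Phi_3$ is a growth function. Since $\Phi_1$ and $\Phi_2$ are growth functions, we may assume by Lemma~\ref{pro:maiaqaq1q8}(i) that they are continuous and increasing; replacing them by equivalent functions only changes $\Phi_3$ up to equivalence. Then $\Phi_1^{-1}$ and $\Phi_2^{-1}$ are continuous, increasing bijections of $[0,\infty)$ that vanish at $0$ and tend to $\infty$. Their product has the same properties, so $\Phi_3=(\Phi_1^{-1}\Phi_2^{-1})^{-1}$ is a continuous increasing bijection with $\Phi_3(0)=0$, that is, a growth function.

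Next I translate the type conditions. If $\Phi$ has lower type $p$ with constant $C$, then for $0\le t\le1$ we have $\Phi(ts)\le Ct^p\Phi(s)$. Setting $u=\Phi(s)$ and applying $\Phi^{-1}$ gives
\[
t\,\Phi^{-1}(u)\le \Phi^{-1}(Ct^p u).
\]
Put $\lambda=Ct^p$. For $\lambda\le 1$ this yields
\[
\Phi^{-1}(\lambda u)\ge C^{-1/p}\lambda^{1/p}\Phi^{-1}(u),\qquad 0<\lambda\le 1,
\]
while for $\lambda\ge1$ the monotonicity of $\Phi^{-1}$ gives $\Phi^{-1}(\lambda u)\ge\Phi^{-1}(u)$. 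Conversely, an inequality of this form for the inverse gives back lower type $p$ for $\Phi$, with a modified constant: one reverses the substitution, taking $u=\Phi(s)$ and $\lambda=t^p$, say. This is exactly the mechanism behind Lemma~\ref{pro:maiaqaq1q8}(v), but without the monotonicity assumptions on $\Phi(t)/t$.

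Now multiply the two lower bounds for $0<\lambda\le1$:
\[
\Phi_3^{-1}(\lambda u)\ge C_1^{-1/p_1}C_2^{-1/p_2}\lambda^{1/p_1+1/p_2}\Phi_3^{-1}(u).
\]
The exponent is $1/r$, so reversing the translation shows that $\Phi_3$ has lower type $r$. For the upper type, suppose $\Phi_1$ has upper type $q$. Then symmetrically $\Phi_1^{-1}(\lambda u)\ge c\,\lambda^{1/q}\Phi_1^{-1}(u)$ for $\lambda\ge1$. For $\Phi_2$ I use only the trivial bound $\Phi_2^{-1}(\lambda u)\ge\Phi_2^{-1}(u)$ when $\lambda\ge1$. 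Multiplying gives $\Phi_3^{-1}(\lambda u)\ge c\lambda^{1/q}\Phi_3^{-1}(u)$ for $\lambda\ge1$, and translating back gives upper type $q$ for $\Phi_3$. Finally, $r\le q$ follows because a function that is both of lower type $r$ and upper type $q$ forces $r\le q$: compare $\Phi(ts)$ for $t\to\infty$ using both bounds. Also $r\le\min(p_1,p_2)\le p_1\le q$ holds directly, since lower type $p$ and upper type $q$ for $\Phi_1$ force $p_1\le q$.

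The main obstacle is the careful inverse/type translation in both directions, handling constants uniformly and the regime $\lambda<C$ versus $\lambda\ge C$. There is also the minor point of justifying the reduction to continuous increasing representatives via equivalence, since inverses of equivalent functions are equivalent in the sense $c^{-1}\Phi^{-1}(c^{-1}u)\le\Psi^{-1}(u)\le c\Phi^{-1}(cu)$.
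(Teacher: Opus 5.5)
Your proof is correct. The paper does not reprove this lemma; it quotes it from \cite{djefeuto1}. Your route is the natural one and the same mechanism as Lemma~\ref{pro:maiaqaq1q8}(v), only without the $\Phi(t)/t$ monotonicity. You translate lower type $p$ (resp.\ upper type $q$) of $\Phi$ into upper type $1/p$ (resp.\ lower type $1/q$) of $\Phi^{-1}$, multiply the two inverse estimates, and translate back, using monotonicity for $t$ close to $1$. The constants work out, and so does the bound $r<p_1\le q$.

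The reduction to equivalent representatives is unnecessary. Asking $\Phi_1^{-1},\Phi_2^{-1}$ to be genuine inverse functions on $[0,\infty)$ already forces $\Phi_1,\Phi_2$ to be continuous increasing bijections, which is also the convention the paper adopts. So you can run your argument on them directly, without transferring types back through equivalence.
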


\begin{lemme}[Lemma 10, \cite{djefeuto1}]\label{pro:mainfqmaqpq5}
Let  $\Phi_{1}$ be a growth function of lower type $p_{1}$ and  $\Phi_{2} \in \Delta_{2} \cap \nabla_{2}$  a convex  growth function. Let $\Phi_{3}$ be a  positive function   on $[0, \infty)$ such that
\begin{equation}\label{eq:deltAQa2}
\Phi_{3}^{-1}(t) =\Phi_{1}^{-1}(t)\Psi_{2}^{-1}(t),~ ~\forall~ t >  0,\end{equation}
where   $\Psi_{2}$  the complementary function of $\Phi_{2}$. The following assertions are satisfied:
\begin{itemize}
\item[(i)] If  $ b_{\Phi_{2}} < p_{1}$ then  $\Phi_{3}$ is a growth function of both lower type $p_{3}$ and upper type  $q_{3}$ such that  $1< p_{3} \leq q_{3}< \infty$.
\item[(ii)] If  $\Phi_{1}$ is also of upper type $q_{1}$ and  $0<p_{1} \leq q_{1} \leq  a_{\Phi_{2}}$ then   $\Phi_{3}$ is  a growth function of both lower type $p_{3}$ and upper type $q_{3}$ such that  $0<p_{3} \leq q_{3} \leq 1$.
\end{itemize} 
\end{lemme}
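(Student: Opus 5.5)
This is Lemma 10 of \cite{djefeuto1}. The plan is to transfer indices from $\Phi_{2}$ to $\Psi_{2}^{-1}$, to use Lemma~\ref{pro:mainfqmpq5} to get the types of a product of inverses, and to read off the types of $\Phi_{3}$.

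\textbf{Types of $\Psi_{2}$.} Since $\Phi_{2}\in\Delta_{2}\cap\nabla_{2}$ is convex, the classical theory of Matuszewska--Orlicz indices \cite{raoren} gives the following. For every $\varepsilon>0$, $\Phi_{2}$ is of lower type $a_{\Phi_{2}}-\varepsilon$ and of upper type $b_{\Phi_{2}}+\varepsilon$, at least at infinity, with $1<a_{\Phi_{2}}\le b_{\Phi_{2}}<\infty$. The behaviour near $0$ is handled by the same $\Delta_2/\nabla_2$ hypotheses, which are global here. Moreover the conjugate indices satisfy
\[
\frac1{a_{\Phi_2}}+\frac1{b_{\Psi_2}}=1,\qquad \frac1{b_{\Phi_2}}+\frac1{a_{\Psi_2}}=1 .
\]
This is obtained from Young's equality $\Psi_{2}(\varphi(t))+\Phi_{2}(t)=t\varphi(t)$, which gives $\Phi_{2}^{-1}(t)\Psi_{2}^{-1}(t)\approx t$. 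Hence $\Psi_{2}^{-1}(t)\approx t/\Phi_{2}^{-1}(t)$. By Lemma~\ref{pro:maiaqaq1q8}(v)-type reasoning, $\Phi_{2}^{-1}$ has lower type $1/(b_{\Phi_2}+\varepsilon)$ and upper type $1/(a_{\Phi_2}-\varepsilon)$. Consequently $\Psi_{2}^{-1}$ has lower type $1-1/(a_{\Phi_2}-\varepsilon)$ and upper type $1-1/(b_{\Phi_2}+\varepsilon)$, both in $(0,1)$.

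\textbf{Combining.} $\Phi_{1}^{-1}$ is of upper type $1/p_{1}$ (for $t\ge1$) and, if $\Phi_{1}$ has upper type $q_{1}$, of lower type $1/q_{1}$. Hence $\Phi_{3}^{-1}=\Phi_{1}^{-1}\Psi_{2}^{-1}$ is a nondecreasing bijection with upper type
\[
\tfrac1{p_1}+1-\tfrac1{b_{\Phi_2}+\varepsilon}
\]
and, in case (ii), lower type
\[
\tfrac1{q_1}+1-\tfrac1{a_{\Phi_2}-\varepsilon}.
\]
Here the product of types is multiplicative exactly as in the proof of Lemma~\ref{pro:mainfqmpq5}. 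Inverting, $\Phi_{3}$ has lower type
\[
p_{3}=\Big(\tfrac1{p_1}+1-\tfrac1{b_{\Phi_2}+\varepsilon}\Big)^{-1}
\]
and, in case (ii), upper type
\[
q_{3}=\Big(\tfrac1{q_1}+1-\tfrac1{a_{\Phi_2}-\varepsilon}\Big)^{-1}.
\]

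\textbf{Case (i).} If $b_{\Phi_2}<p_{1}$, choose $\varepsilon$ with $b_{\Phi_2}+\varepsilon<p_1$. Then $1/p_1-1/(b_{\Phi_2}+\varepsilon)<0$, so $p_3>1$. For an upper type of $\Phi_3$ we need a lower type for $\Phi_3^{-1}$. We use $\Phi_1^{-1}$ nondecreasing (lower type $0^+$ is not enough), so instead we use the lower type $1/q_1'$ that any lower-type growth function's inverse enjoys. Concretely, $\Phi_1^{-1}(st)\ge \Phi_1^{-1}(s)$ for $t\ge1$, combined with the lower type $1-1/(a_{\Phi_2}-\varepsilon)>0$ of $\Psi_2^{-1}$. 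This gives $\Phi_3^{-1}$ a positive lower type, hence $q_3<\infty$. This delivers $1<p_3\le q_3<\infty$.

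\textbf{Case (ii).} If $q_1\le a_{\Phi_2}$, then $1/q_1-1/a_{\Phi_2}\ge0$. Letting $\varepsilon\to0$ requires care, since we need the exact bound $q_3\le1$. I would therefore avoid $\varepsilon$ by working directly with the inequality $t\varphi_2(t)\ge a\,\Phi_2(t)$ integrated to a genuine type estimate. Alternatively, one can use the monotonicity formulation, namely that $t\mapsto \Phi_3^{-1}(t)/t$ is nondecreasing when $\Phi_1^{-1}(t)\,t^{-1/q_1}$ and $\Psi_2^{-1}(t)\,t^{-(1-1/a)}$ are monotone. Either way the conclusion is that $\Phi_3^{-1}$ has lower type $\ge1$, so $q_3\le1$, and positivity of the upper type of $\Phi_3^{-1}$ gives $p_3>0$.

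The main obstacle is the endpoint issue just described: the indices $a,b$ only give types up to $\varepsilon$ and only asymptotically. Turning them into exact global types, especially the endpoint $q_3\le1$ in case (ii), will need the $\Delta_2/\nabla_2$ hypotheses to control small $t$ and the monotonicity of $t\varphi_2(t)/\Phi_2(t)$-type quotients.
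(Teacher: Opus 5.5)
The paper does not prove this lemma (it is quoted from \cite{djefeuto1}), so I judge your proposal on its own. Your mechanism is the right one: $t\le\Phi_2^{-1}(t)\Psi_2^{-1}(t)\le 2t$, transfer of types to inverses, and additivity of the types of inverses under products. But the step you flag as the main obstacle is a genuine gap, and neither proposed repair works.

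\textbf{The behaviour near $0$.} With $a_{\Phi_2},b_{\Phi_2}$ defined, as in this paper, by $\liminf/\limsup$ as $t\to\infty$, they say nothing about $\Phi_2$ near $0$. The types in the conclusion, however, are global in $s$. Your sentence ``the behaviour near $0$ is handled by the same $\Delta_2/\nabla_2$ hypotheses'' is false: global $\Delta_2\cap\nabla_2$ only gives \emph{some} global lower type $>1$ and upper type $<\infty$, not types close to $a_{\Phi_2}$, $b_{\Phi_2}$. In fact (i) fails under these definitions. Take $\Phi_1(t)=t^2$, and $\Phi_2(t)=t^3$ on $[0,1]$, $\Phi_2(t)=2t^{3/2}-1$ on $[1,\infty)$. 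This $\Phi_2$ is convex and $\mathscr{C}^1$ with $3/2\le t\varphi_2(t)/\Phi_2(t)\le 3$, so $\Phi_2\in\Delta_2\cap\nabla_2$, and $b_{\Phi_2}=3/2<2=p_1$. However $\Psi_2(s)=\frac{2}{3\sqrt3}s^{3/2}$ for $s\le 3$. Hence $\Phi_3^{-1}(u)=c\,u^{7/6}$ and $\Phi_3(v)=c'v^{6/7}$ near $0$, so $\Phi_3$ has no lower type $>6/7$.

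\textbf{The endpoint in (ii).} It has the same defect. A $\liminf$ at infinity does not give $t\varphi_2(t)\ge a_{\Phi_2}\Phi_2(t)$ on any range, so there is nothing to integrate, and $q_3\le 1$ can fail even at infinity. With $\Phi_1(t)=t^3$ and $\Phi_2(t)=t^3/\log(e+t)$, we have $\Phi_2$ convex, $2<t\varphi_2(t)/\Phi_2(t)\le 3$, and $a_{\Phi_2}=3=q_1$. One gets $\Phi_3^{-1}(u)\approx u(\log u)^{-1/3}$ for large $u$, i.e.\ $\Phi_3(v)\approx v(\log v)^{1/3}$, which is not of upper type $1$. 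So no care with $\varepsilon$ closes your argument; what is missing is a hypothesis.

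\textbf{What makes it work.} Read the indices globally: $a_{\Phi_2}=\inf_{t>0}t\varphi_2(t)/\Phi_2(t)$ and $b_{\Phi_2}=\sup_{t>0}t\varphi_2(t)/\Phi_2(t)$. Alternatively, replace them by a global lower, resp.\ upper, type of $\Phi_2$; that is the information actually available when the lemma is applied in Theorem~\ref{pro:mainfqmaqppaaqq5}, via $q_2<p_1$ and $q_1\le 1<p_2$. With global indices, integrate $a_{\Phi_2}/t\le(\log\Phi_2)'(t)\le b_{\Phi_2}/t$ over $[s,\lambda s]$. This gives $\lambda^{a_{\Phi_2}}\Phi_2(s)\le\Phi_2(\lambda s)\le\lambda^{b_{\Phi_2}}\Phi_2(s)$ for all $s>0$, $\lambda\ge 1$. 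Your bookkeeping then goes through with $\varepsilon=0$:
\begin{itemize}
\item $\Psi_2^{-1}$ has lower type $1-1/a_{\Phi_2}$ and upper type $1-1/b_{\Phi_2}$;
\item in (i) this gives $p_3=(1/p_1+1-1/b_{\Phi_2})^{-1}>1$;
\item in (ii) it gives $q_3=(1/q_1+1-1/a_{\Phi_2})^{-1}\le 1$, endpoint included.
\end{itemize}
Also drop the sentence about a lower type ``$1/q_1'$'' in Case (i), since nothing of the sort is available there. Monotonicity of $\Phi_1^{-1}$, together with a positive lower type of $\Psi_2^{-1}$ (from $\Psi_2\in\Delta_2$, i.e.\ $\Phi_2\in\nabla_2$), is exactly what gives $q_3<\infty$.
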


\medskip

\noindent
We also recall that equivalent growth functions generate the same Orlicz-type spaces: if $\Phi_1 \sim \Phi_2$, then
\[
L^{\Phi_1} = L^{\Phi_2}, \qquad H^{\Phi_1} = H^{\Phi_2},
\]
and their Luxemburg norms are equivalent. We can therefore, without loss of generality, replace the equivalence condition between  $\Phi_{1}$ and  $\Phi_{2}$ with an equality, (i.e:  $\Phi_{1}=\Phi_{2}$), when we work in Orlicz type spaces.

\medskip

\begin{remark}\label{pro:main 5aqaqq2pl}
\begin{itemize}
\item[(i)] From now on, every growth function $\Phi$ of lower type $p \in (0,\infty)$ will be assumed to be continuous and increasing. Moreover, by Lemma \ref{pro:maiaqaq1q8}, the function $\Phi_p$ defined in \eqref{eq:suiaqaq8n} is continuous, convex, and increasing.
\medskip
\item[(ii)] If $\Phi$ is simultaneously of lower type $p$ and upper type $q$, we shall assume without loss of generality that
\[
\Phi \in \mathscr{U} \cap \nabla_2 \quad \text{if } 1 < p \leq q < \infty,
\qquad \text{and} \qquad
\Phi \in \mathscr{L} \quad \text{if } 0 < p \leq q \leq 1,
\]
in view of Lemma \ref{pro:maiaqaq1q8}.
\end{itemize}
\end{remark}

\subsection{Hardy-Orlicz spaces.}

Let $s \in \mathbb{N}$ and let $\Phi$ be a growth function of lower type. The atomic Hardy--Orlicz space of order $s$ on $\mathbb{R}$, denoted by $H^{\Phi,s}_{\mathrm{at}}(\mathbb{R})$, is defined as the set of all tempered distributions $f \in \mathscr{S}'(\mathbb{R})$ that admit a decomposition
\[
f=\sum_{j=0}^{\infty} a_j \quad \text{in } \mathscr{S}'(\mathbb{R}),
\]
where, for each $j$, $a_j$ is a multiple of a $(\Phi,s)$-atom supported on an interval $I_j$, and such that
\[
\sum_{j=0}^{\infty} |I_j|\, \Phi\!\left( \|a_j\|_{L^2}\, |I_j|^{-1/2} \right) < \infty.
\]

For any sequence $\{a_j\}_j$ of multiples of $(\Phi,s)$-atoms, define
\[
\Lambda_2\big(\{a_j\}_j\big)
:= \inf \left\{ \lambda > 0 : 
\sum_{j=0}^{\infty} |I_j|\, 
\Phi\!\left( \frac{\|a_j\|_{L^2}\, |I_j|^{-1/2}}{\lambda} \right) \leq 1 
\right\}.
\]
The (Luxemburg) quasi-norm on $H^{\Phi,s}_{\mathrm{at}}(\mathbb{R})$ is then defined by
\[
\|f\|_{H^{\Phi,s}_{\mathrm{at}}(\mathbb{R})}^{\mathrm{lux}}
:= \inf \left\{ \Lambda_2\big(\{a_j\}_j\big) : 
f=\sum_{j=0}^{\infty} a_j \ \text{in } \mathscr{S}'(\mathbb{R}) 
\right\},
\]
where the infimum is taken over all such atomic decompositions of $f$.

\medskip

Let $f$ be a bounded tempered distribution on $\mathbb{R}$. The Poisson maximal function of $f$, denoted by $f_P^*$, is defined by
\[
f_P^*(x) := \sup_{t>0} |(P_t * f)(x)|, \quad \forall x \in \mathbb{R},
\]
where, for each $t>0$, $P_t$ is the Poisson kernel on $\mathbb{R}$ given by
\[
P_t(x) := \frac{1}{\pi} \frac{t}{x^2 + t^2}, \quad \forall x \in \mathbb{R}.
\]

It is well known that, if $f$ is a bounded tempered distribution on $\mathbb{R}$, then $P_t * f$ is a well-defined, bounded, and smooth function, harmonic on the upper half-plane $\mathbb{C}_+$, and
\[
P_t * f \longrightarrow f \quad \text{in } \mathscr{S}'(\mathbb{R}) \quad \text{as } t \to 0.
\]

We recall some fundamental results on the Hardy--Orlicz space $H^{\Phi}(\mathbb{R})$, most of which can be found in \cite{yangLiangKy}.

\begin{proposition}\label{pro:mainamp}
Let $\Phi \in \mathscr{L}$ and let $s \in \mathbb{N}$ be such that $s \geq m_{\Phi}$, where $m_{\Phi}$ is defined in \eqref{eq:dqtap2}. Then the following assertions hold:
\begin{itemize}
\item[(i)] Let $f$ be a bounded tempered distribution on $\mathbb{R}$. Then $f \in H^{\Phi}(\mathbb{R})$ if and only if $f_P^* \in L^{\Phi}(\mathbb{R})$. Moreover, the corresponding quasi-norms are equivalent.

\item[(ii)] The inclusion $H^{\Phi}(\mathbb{R}) \subset \mathscr{S}'(\mathbb{R})$ is continuous.
\item[(iii)] If $s \geq m_{\Phi}$, then $H^{\Phi}(\mathbb{R}) = H^{\Phi,s}_{\mathrm{at}}(\mathbb{R})$
with equivalent quasi-norms.
\end{itemize}
\end{proposition}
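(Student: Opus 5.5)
The three assertions are taken essentially from the literature on Musielak--Orlicz Hardy spaces. The plan is to specialize the general results of \cite{yangLiangKy}, stated for growth functions $\varphi(x,t)$, to $\varphi(x,t)=\Phi(t)$. The first step is to check the hypotheses there. The function $\Phi\in\mathscr{L}_p$ is a growth function of uniformly lower type $p$ and upper type $1$, since $t\mapsto\Phi(t)/t$ is non-increasing. A function that does not depend on $x$ trivially belongs to the uniform Muckenhoupt class $\mathbb{A}_1$, with critical weight index $q(\Phi)=1$. Hence the critical lower type index is $i(\Phi)\geq p$. The integer $m(\Phi)=\lfloor q(\Phi)/i(\Phi)-1\rfloor$ used there is therefore at most $\lfloor 1/p\rfloor-1=m_\Phi$, as defined in \eqref{eq:dqtap2}. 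Any $s\ge m_\Phi$ is thus admissible in that theory, and the grand maximal function of order $m_\Phi$ yields the same space as the one defined there, up to equivalent quasi-norms.

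For (ii), I would argue directly. Fix $\varphi\in\mathscr{S}(\mathbb{R})$, normalized so that $\varphi\in c\,\mathcal{S}_{m_\Phi}$. For every $x$ with $|x|<1$ we have $|\langle f,\varphi\rangle|=|\varphi_1\ast f(0)|\lesssim f^*_{m_\Phi}(x)$, since $\varphi(-\cdot)$ is an admissible test function with $t=1$ and $|0-x|<1$. Let $\lambda=\|f\|^{lux}_{H^\Phi}$. Since $\Phi$ is increasing,
\[
2\,\Phi\big(|\langle f,\varphi\rangle|/(C\lambda)\big)\le\int_{-1}^{1}\Phi\big(f^*_{m_\Phi}(x)/\lambda\big)\,dx\le 1,
\]
which gives $|\langle f,\varphi\rangle|\le C\lambda\,\Phi^{-1}(1/2)\,\mathcal{N}_{m_\Phi}(\varphi)$. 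This is exactly continuity of the inclusion into $\mathscr{S}'$.

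For (i), one direction is elementary: $P_t$ is dominated by a multiple of an admissible Schwartz-type function, via a standard approximation of $P_t$ by Schwartz functions. So $f_P^*\lesssim f^*_{m_\Phi}$ pointwise, and by $\Delta_2$ (lower type $p$ with upper type $1$) we get $\|f_P^*\|_{L^\Phi}\lesssim\|f\|_{H^\Phi}$. The converse is the main obstacle. It is the Fefferman--Stein argument: for a bounded distribution, the non-tangential Poisson maximal function controls the grand maximal function in $L^\Phi$. The route is to bound $f^*_{m_\Phi}$ by a tangential Poisson maximal function with parameter $N$, and then to bound that by $\mathcal{M}\big((f_P^{*})^{r}\big)^{1/r}$ for some $r<p$. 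That step uses the subharmonicity of $|u|^r$ for harmonic $u$, after passing to a harmonic conjugate system in $\mathbb{R}^2_+$, which is where boundedness of $f$ is needed. One then concludes with the boundedness of the Hardy--Littlewood maximal operator on $L^{\Phi_{r}}$, where $\Phi_r(t)=\Phi(t^{1/r})$ has lower type $p/r>1$ and belongs to $\nabla_2$. Rather than redoing this, I would quote the corresponding theorem of \cite{yangLiangKy} (Poisson/radial maximal characterization), with the hypotheses checked above.

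For (iii), the inclusion $H^{\Phi,s}_{at}\subset H^\Phi$ comes from an estimate for a single atom. Take an atom $a$ on an interval $I$ and let $2I$ be the doubled interval. On $2I$, the bound $a^*\lesssim\mathcal{M}a$, together with $L^2$ boundedness and Jensen for the convex function $\Phi_p$, gives $\int_{2I}\Phi(a^*/\lambda)\lesssim|I|\Phi(\|a\|_2|I|^{-1/2}/\lambda)$. Off $2I$, the moment conditions up to order $s\ge m_\Phi$ and a Taylor expansion give $a^*(x)\lesssim\|a\|_{L^1}|I|^{s+1}|x-x_I|^{-(s+2)}$. Integrability against $\Phi$ then follows from lower type $p$, because $(s+2)p>1$. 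Summing over atoms uses the subadditivity of the concave $\Phi$. The reverse inclusion is the Calderón--Zygmund/Whitney decomposition of the level sets of $f^*$, which is again cited from \cite{yangLiangKy}.
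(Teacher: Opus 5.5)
Your overall route is the paper's: the paper does not prove this proposition but recalls it from \cite{yangLiangKy}. Your check that $\varphi(x,t)=\Phi(t)$ fits that framework is more careful than what the paper writes: weight class $\mathbb{A}_1$, $q(\Phi)=1$, $i(\Phi)\ge p$ by the lower type assumption, hence $m(\Phi)\le m_\Phi$, and the $(\Phi,s)$-atoms and $\Lambda_2$ reduce to the paper's definitions. Your argument for (ii) is exactly the paper's Proposition~\ref{pro:maaqqaq5}. Several of the steps you carry out yourself, however, are wrong as stated.

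In (iii), Jensen's inequality for the convex function $\Phi_p$ goes the wrong way. Applied to $(a^*/\lambda)^p$, it bounds the average of $\Phi(a^*/\lambda)$ over $2I$ from below. What you need is concavity. The map $u\mapsto\Phi(u^{1/2})$ is concave, being an increasing concave function composed with a concave one. Jensen then gives $\int_{2I}\Phi(a^*/\lambda)\,dx\le|2I|\,\Phi\bigl(|2I|^{-1/2}\|a^*\|_{L^2}/\lambda\bigr)$, and $\|a^*\|_{L^2}\lesssim\|a\|_{L^2}$ finishes. Off $2I$, the test functions in $\mathcal{S}_{m_\Phi}$ have only $m_\Phi+1$ controlled derivatives. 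So when $s>m_\Phi$ you cannot Taylor expand to order $s$; expand only to order $m_\Phi$, whose moments vanish since $s\ge m_\Phi$. This gives decay $|x-x_I|^{-(m_\Phi+2)}$, which suffices because $(m_\Phi+2)p=(\lfloor 1/p\rfloor+1)p>1$.

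In (i), the easy direction is fine, with two small corrections. The bound $f_P^*\lesssim f^*_{m_\Phi}$ comes from writing $P=\sum_{k\ge0}2^{-k}(\Psi_k)_{2^k}$ with the $\Psi_k$ compactly supported and uniformly in $C\,\mathcal{S}_{m_\Phi}$, not from a pointwise domination of kernels. No $\Delta_2$ property is needed, since the Luxemburg functional is homogeneous.

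The mechanism you sketch for the converse does not work, for three reasons. First, for real harmonic $u$, $|u|^r$ is not subharmonic when $r<1$. Second, subharmonicity of $|u+iv|^r$ only controls maximal functions of $u+iv$, which $f_P^*$ does not dominate. Third, the tangential maximal function is controlled by $\mathcal{M}$ applied to a power of the \emph{nontangential} Poisson maximal function, not of the radial one $f_P^*$. The mean value property of $|u|$ does give a bound by $\mathcal{M}(f_P^*)$, but that is only $r=1$. This is useless here, because $\mathcal{M}$ is unbounded on $L^\Phi$ when $p\le1$.

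The passage from radial to nontangential control is the Fefferman--Stein argument. A gradient maximal function controls the oscillation on a good set, and the bad set is absorbed, after an a priori truncation. Boundedness of $f$ is what gives meaning to $P_t*f$ and to the decomposition of $P$ above; it is not needed for a conjugate system. Since you end by quoting the result, the proposal survives, but the Fefferman--Stein theorem is what you are relying on, not the subharmonicity sketch.
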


\medskip

\noindent
Proposition~\ref{pro:mainamp} shows that the Hardy--Orlicz space $H^{\Phi}(\mathbb{R})$ admits both a maximal characterization and an atomic decomposition. These equivalent descriptions will be used interchangeably in the sequel, depending on whether localization or harmonic analysis tools are required.

\medskip

\noindent
We now prove a useful estimate that controls the action of distributions in $H^{\Phi}(\mathbb{R})$ on Schwartz functions. 

\begin{proposition}\label{pro:maaqqaq5}
Let  $\Phi \in \mathscr{L}$ and  $f \in H^{\Phi}(\mathbb{R})$. For all  $\varphi \in \mathscr{S}(\mathbb{R})$, we have
\begin{equation}\label{eq:tqa2}
|\langle f, \varphi\rangle| \leq \mathcal{N}_{m_{\Phi}}(\varphi)\inf_{z \in \mathbb{R}:|z| \leq 1}f_{m_{\Phi}}^{*}(z). 
  \end{equation}
Moreover,  there is a constant $C:=C_{\Phi}>0$ such that
\begin{equation}\label{eq:tqqaa2}
|\langle f, \varphi\rangle| \leq C \mathcal{N}_{m_{\Phi}}(\varphi)\|f\|_{H^{\Phi}(\mathbb{R})}^{lux}. 
  \end{equation}
\end{proposition}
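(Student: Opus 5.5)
The plan is to deduce the first inequality by evaluating the convolution at $t=1$ with a reflected test function. For $\varphi\in\mathscr{S}(\mathbb{R})$ with $\mathcal{N}_{m_\Phi}(\varphi)\neq 0$ (otherwise $\varphi=0$ and there is nothing to prove), set $\psi:=\tilde\varphi/\mathcal{N}_{m_\Phi}(\varphi)$, where $\tilde\varphi(x):=\varphi(-x)$. The seminorm $\mathcal{N}_{m}$ is invariant under reflection, because $(1+|x|)$ is even and $|\partial^k\tilde\varphi(x)|=|\partial^k\varphi(-x)|$. Hence $\mathcal{N}_{m_\Phi}(\psi)=1$, so $\psi\in\mathcal{S}_{m_\Phi}(\mathbb{R})$. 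Since $\psi_1=\psi$, we have
\[
\langle f,\varphi\rangle=\mathcal{N}_{m_\Phi}(\varphi)\,(\psi_1\ast f)(0).
\]

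Now fix $z\in\mathbb{R}$ with $|z|\le 1$. The point $y=0$ satisfies $|y-z|\le 1$, which is not quite the strict inequality $|y-z|<t$ required in the definition of $f^*_{m_\Phi}$ when $t=1$. To handle this, I will use $t>1$: take $t\downarrow 1$ and use that $t\mapsto(\psi_t\ast f)(0)=\langle f,\psi_t(-\cdot)\rangle$ is continuous, since $\psi_t\to\psi$ in $\mathscr{S}(\mathbb{R})$ as $t\to1$. For every $t>1$ we have $|0-z|<t$, so $|(\psi_t\ast f)(0)|\le f^*_{m_\Phi}(z)$. Letting $t\to1$ gives $|(\psi\ast f)(0)|\le f^*_{m_\Phi}(z)$. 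Taking the infimum over $|z|\le1$ yields \eqref{eq:tqa2}.

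For \eqref{eq:tqqaa2}, set $\lambda:=\|f\|_{H^\Phi(\mathbb{R})}^{lux}$, which we may assume positive, and $m:=\inf_{|z|\le1}f^*_{m_\Phi}(z)$. Then $f^*_{m_\Phi}\ge m$ on $[-1,1]$. By the Luxemburg norm definition and the fact that $\Phi$ is continuous and increasing, we have $\int\Phi(f^*_{m_\Phi}/\lambda)\le1$. This gives
\[
2\,\Phi(m/\lambda)\le\int_{-1}^1\Phi\!\left(\frac{f^*_{m_\Phi}(x)}{\lambda}\right)dx\le1,
\]
so $m\le\lambda\,\Phi^{-1}(1/2)$. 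Combining this with the first part gives \eqref{eq:tqqaa2} with $C=\Phi^{-1}(1/2)$.

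The only delicate points are the strict-inequality issue in the nontangential supremum and the justification that $\int\Phi(f^*/\lambda)\le 1$ holds at $\lambda$ equal to the infimum rather than only for larger $\lambda$. Both are handled by limiting arguments. For the second, apply the estimate with $\lambda'>\lambda$ and let $\lambda'\downarrow\lambda$.
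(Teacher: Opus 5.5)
Your proof is correct and follows the same route as the paper. You reflect $\varphi$, read $\langle f,\varphi\rangle$ as the convolution at $0$ with the normalized test function at scale $t=1$, and then average $\Phi(f^*_{m_\Phi}/\lambda)$ over $[-1,1]$ to get $C=\Phi^{-1}(1/2)$. Your $t\downarrow 1$ limiting argument for the endpoints $z=\pm1$ and your $\lambda'\downarrow\lambda$ limit are slightly more careful than the paper: its proof only treats $|z|<1$ and applies the Luxemburg bound directly at the infimum.
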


\begin{proof} 
Assume $f\not=0$ because when $f=0$ there is nothing to show.
Let  $\varphi \in \mathscr{S}(\mathbb{R})$. For $x \in \mathbb{R}$, put 
$\widetilde{\varphi}(x) =\varphi(-x).$
By construction,  $\widetilde{\varphi} \in \mathscr{S}(\mathbb{R})$,  $\mathcal{N}_{m_{\Phi}}(\widetilde{\varphi})=\mathcal{N}_{m_{\Phi}}(\varphi)$. We have
$$ |\langle f, \varphi\rangle|= \left|\int_{\mathbb{R}}f(x)\varphi(x)dx \right|=\left|\int_{\mathbb{R}}f(x)\widetilde{\varphi}(0-x)dx \right| =|(f\ast \widetilde{\varphi})(0)|.  $$
Let $z \in \mathbb{R}$ such that $|z| < 1$. We have
$$ |(f\ast \widetilde{\varphi})(0)| \leq \sup_{|z|<1}|(f\ast \widetilde{\varphi})(z)| \leq \mathcal{N}_{m_{\Phi}}(\widetilde{\varphi}) \sup_{|z|<1}\left|\left(f\ast \frac{\widetilde{\varphi}}{\mathcal{N}_{m_{\Phi}}(\widetilde{\varphi})}\right)(z)\right| \leq \mathcal{N}_{m_{\Phi}}(\widetilde{\varphi})f_{m_{\Phi}}^{*}(z). $$
We deduce that, 
$$  |\langle f, \varphi\rangle| \leq \mathcal{N}_{m_{\Phi}}(\varphi)\inf_{z \in \mathbb{R}:|z| < 1}f_{m_{\Phi}}^{*}(z). 
   $$
It follows that
\begin{align*}
\Phi\left(  \frac{|\langle f, \varphi\rangle|}{\mathcal{N}_{m_{\Phi}}(\varphi)\|f\|_{H^{\Phi}(\mathbb{R})}^{lux}}  \right) &\leq \frac{1}{|I(0,2)|}\int_{I(0,2)}\Phi\left(  \frac{\inf_{z \in \mathbb{R}:|z| < 1}f_{m_{\Phi}}^{*}(z)}{\|f\|_{H^{\Phi}(\mathbb{R})}^{lux}}  \right)dx  \\
 &\leq \frac{1}{2}\int_{\mathbb{R}}\Phi\left(  \frac{f_{m_{\Phi}}^{*}(x)}{\|f\|_{H^{\Phi}(\mathbb{R})}^{lux}}  \right)dx \leq \frac{1}{2},
\end{align*}
where $I(0,2)$ is the interval with center $0$ and length $2$. 
\end{proof}

\begin{proposition}\label{pro:main2aaoamp}
Let $\Phi \in \mathscr{L}$.   Then
\[
\mathscr{S}(\mathbb{R}) \subset H^{\Phi}(\mathbb{R}) \subset \mathscr{S}'(\mathbb{R}).
\]
\end{proposition}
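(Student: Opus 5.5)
The second inclusion $H^{\Phi}(\mathbb{R}) \subset \mathscr{S}'(\mathbb{R})$ I would take directly from Proposition~\ref{pro:mainamp}(ii). Proposition~\ref{pro:maaqqaq5} also gives it with an explicit constant. By \eqref{eq:tqqaa2}, every $f\in H^{\Phi}(\mathbb{R})$ acts on $\mathscr{S}(\mathbb{R})$ through $|\langle f,\varphi\rangle|\le C\,\mathcal{N}_{m_\Phi}(\varphi)\,\|f\|^{lux}_{H^\Phi(\mathbb{R})}$. Since $\mathcal{N}_{m_\Phi}$ is one of the seminorms defining the topology of $\mathscr{S}(\mathbb{R})$, $f$ is a tempered distribution, and the embedding is continuous.

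For $\mathscr{S}(\mathbb{R})\subset H^{\Phi}(\mathbb{R})$, fix $\varphi\in\mathscr{S}(\mathbb{R})$ and write $m=m_\Phi$. The plan is to bound the grand maximal function $\varphi^*_m$ pointwise and then check that $\int_{\mathbb{R}}\Phi(\varphi^*_m(x)/\lambda)\,dx<\infty$ for some $\lambda>0$.

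\textbf{Local part.} For $|x|\le 2$ I would use $|\psi_t*\varphi(y)|\le \|\psi\|_{L^1}\|\varphi\|_\infty$. For $\psi\in\mathcal{S}_m(\mathbb{R})$ one has $\|\psi\|_{L^1}\lesssim 1$, so $\varphi^*_m$ is bounded. Its contribution to the integral is at most $4\,\Phi(C/\lambda)$, which is finite.

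\textbf{Tail part.} For $|x|>2$ I would split $\psi_t*\varphi(y)=\int\psi_t(y-u)\varphi(u)\,du$ with $|y-x|<t$ into two regions:
\begin{itemize}
\item[(a)] $|u|<|x|/2$. Here I would use the decay $(1+|\cdot|)^{-2(m+2)}$ of $\psi$.
\item[(b)] $|u|\ge |x|/2$. Here I would use the rapid decay of $\varphi$.
\end{itemize}
Region (b) contributes $O(|x|^{-N})$ for every $N$. Region (a) gives a bound of the form $\sup_t t^{-1}(1+|x|/t)^{-2(m+2)}\lesssim |x|^{-1}$. If in addition $\varphi$ has vanishing moments up to order $m$, I would Taylor-expand $\psi_t(y-u)$ about $u=0$ to order $m$. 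This improves the bound to $\varphi^*_m(x)\lesssim |x|^{-(m+2)}$. Then the lower-type estimate $\Phi(st)\le Ct^{p}\Phi(s)$ for $t\le1$ gives $\Phi(\varphi^*_m(x)/\lambda)\lesssim |x|^{-p(m+2)}\Phi(1/\lambda)$. This is integrable on $|x|>2$ because $p(m+2)=p(\lfloor 1/p\rfloor+1)>1$.

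\textbf{Main obstacle.} The hard step is the general tail, where $\varphi$ may have nonzero moments. There the best available bound is $\varphi^*_m(x)\approx |x|^{-1}$, and with $p\le1$ the integral $\int_{|x|>2}\Phi(c/|x|)\,dx$ need not converge. For $\Phi(t)=t^{p}$ it diverges. I would first try to exploit the extra structure of $\mathscr{L}$, namely that $\Phi(t)/t$ is non-increasing, together with concavity. I expect, however, that the inclusion can only be established for the moment-vanishing subclass of $\mathscr{S}(\mathbb{R})$, or for Schwartz functions paired against the tempered topology. Checking this point is where I would concentrate the effort before writing the remaining estimates in full.
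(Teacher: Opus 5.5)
Your handling of $H^{\Phi}(\mathbb{R})\subset\mathscr{S}'(\mathbb{R})$ is fine and agrees with the paper. The inclusion is built into the definition of $H^{\Phi}(\mathbb{R})$, and \eqref{eq:tqqaa2} gives continuity. For the first inclusion, the obstacle you isolated is not technical: $\mathscr{S}(\mathbb{R})\subset H^{\Phi}(\mathbb{R})$ is false for every $\Phi\in\mathscr{L}$, so no proof can exist. What your proposal lacks is the lower bound that turns your suspicion into a disproof.

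Take $\varphi\in\mathscr{S}(\mathbb{R})$ with $\int_{\mathbb{R}}\varphi\neq0$. By dominated convergence,
\[
|x|\,(P_{|x|}*\varphi)(x)=\frac{1}{\pi}\int_{\mathbb{R}}\frac{x^{2}}{(x-u)^{2}+x^{2}}\,\varphi(u)\,du\longrightarrow\frac{1}{2\pi}\int_{\mathbb{R}}\varphi(u)\,du\qquad(|x|\to\infty).
\]
Hence $\varphi_P^*(x)\geq c\,|x|^{-1}$ for $|x|\geq R$, with some $c>0$. The same holds for $\varphi^*_{m_\Phi}$ directly: take $\psi(v)=\varepsilon e^{-v^{2}}\in\mathcal{S}_{m_\Phi}(\mathbb{R})$ with $\varepsilon$ small, $y=0$ and $t=2|x|$.

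The property of $\mathscr{L}$ you hoped would help actually forces divergence. Since $\Phi(s)/s$ is non-increasing, $\Phi(s)\geq\Phi(1)\,s$ for $0<s\leq1$. So for every $\lambda>0$, with $R_\lambda=\max(R,c/\lambda)$,
\[
\int_{|x|\geq R_\lambda}\Phi\Big(\frac{\varphi_P^*(x)}{\lambda}\Big)\,dx\;\geq\;\frac{c\,\Phi(1)}{\lambda}\int_{|x|\geq R_\lambda}\frac{dx}{|x|}=\infty .
\]
Thus $\varphi\notin H^{\Phi}(\mathbb{R})$. For $\Phi(t)=t$ this is the classical fact that elements of $H^{1}(\mathbb{R})$ have mean zero. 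The divergence you observed for $\Phi(t)=t^{p}$ is the general situation in $\mathscr{L}$, not an artifact of a crude bound.

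The paper gets past this point only through the asserted ``standard estimate'' $|(P_t*\varphi)(x)|\leq C_N(1+|x|)^{-N}$ uniformly in $t>0$. The computation above refutes it: uniformly in $t$ one only has $(1+|x|)^{-1}$, attained at $t\approx|x|$.

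What is true is the statement for $\varphi\in\mathscr{S}(\mathbb{R})$ with $\int_{\mathbb{R}}x^{k}\varphi(x)\,dx=0$ for $0\leq k\leq m_\Phi$, and your tail argument essentially proves it. One bookkeeping point remains. The Taylor polynomial $\sum_{k\leq m}\frac{(-u)^{k}}{k!}\psi_t^{(k)}(y)$ is subtracted on all of $\mathbb{R}$, so you must also control it on $|u|\geq|x|/2$. This is harmless:
\begin{itemize}
\item $|\psi_t^{(k)}(y)|\lesssim|x|^{-k-1}$ whenever $|y-x|<t$ and $|x|>2$ (treat $t<|x|/4$ and $t\geq|x|/4$ separately);
\item $\int_{|u|\geq|x|/2}|u|^{k}|\varphi(u)|\,du$ decays faster than any power of $|x|$.
\end{itemize}
Then $\varphi^*_{m_\Phi}(x)\lesssim|x|^{-(m_\Phi+2)}$, and $p(m_\Phi+2)=p(\lfloor1/p\rfloor+1)>1$ closes the argument. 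The proposition should be restated in this restricted form. Its use in the proof of Proposition~\ref{pro:main2aaoammqdp1}, which relies on the unrestricted inclusion, must be adjusted accordingly.
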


\begin{proof}
The inclusion $H^{\Phi}(\mathbb{R}) \subset \mathscr{S}'(\mathbb{R})$ follows from Proposition~\ref{pro:mainamp}. It remains to prove that $\mathscr{S}(\mathbb{R}) \subset H^{\Phi}(\mathbb{R})$.

Let $\varphi \in \mathscr{S}(\mathbb{R})$. By the maximal characterization of $H^{\Phi}(\mathbb{R})$, it suffices to show that $\varphi_P^* \in L^{\Phi}(\mathbb{R})$, where
\[
\varphi_P^*(x) := \sup_{t>0} |(P_t * \varphi)(x)|.
\]
Since $\varphi \in \mathscr{S}(\mathbb{R})$, for every $N \in \mathbb{N}$,
\[
|\varphi(y)| \le C_N (1+|y|)^{-N}.
\]
A standard estimate yields
\[
|(P_t * \varphi)(x)| \le C_N (1+|x|)^{-N},
\]
uniformly in $t>0$. Hence,
\[
\varphi_P^*(x) \le C_N (1+|x|)^{-N}.
\]
Since $\Phi$ is of lower type $p \in (0,1]$, we obtain
\[
\Phi(\varphi_P^*(x)) \le C (1+|x|)^{-Np}.
\]
Choosing $N$ large enough gives
\[
\int_{\mathbb{R}} \Phi(\varphi_P^*(x))\,dx < \infty,
\]
which proves that $\varphi \in H^{\Phi}(\mathbb{R})$.
\end{proof}
\medskip

\noindent
In the following result, we obtain that $H^{\Phi}(\mathbb{R})$ closed under regularization by the Poisson kernel, this provides an approximation of the identity compatible with the Hardy--Orlicz structure.

\begin{proposition}\label{pro:mainqaq2aop}
Let  $\Phi \in \mathscr{L}$ and $f \in \mathscr{S}'(\mathbb{R})$ be a bounded distribution. 
If $f \in H^{\Phi}(\mathbb{R})$, then for $t>0$, the function $P_{t}\ast f$ belongs to $H^{\Phi}(\mathbb{R})$ and 
\begin{equation}\label{eq:pmta2}
\|P_{t}\ast f\|_{H^{\Phi}(\mathbb{R})}^{lux} \leq C \|f\|_{H^{\Phi}(\mathbb{R})}^{lux},
\end{equation}
where $C$ is a constant independent of $f$ and $t$. Moreover,
\begin{equation}\label{eq:dqtaqa2}
\lim_{t \to 0}\|(P_{t}\ast f)-f\|_{H^{\Phi}(\mathbb{R})}^{lux}=0.
\end{equation}
\end{proposition}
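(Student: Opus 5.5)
We will work through the Poisson maximal characterization of Proposition~\ref{pro:mainamp}(i). Since $f$ is a bounded distribution, so is $P_t\ast f$: it is a bounded smooth function, and $\varphi\ast(P_t\ast f)=P_t\ast(\varphi\ast f)$ is bounded for every $\varphi\in\mathscr{S}(\mathbb{R})$. So the characterization applies to both $f$ and $P_t\ast f$. By the semigroup property $P_s\ast P_t=P_{s+t}$,
\[
(P_t\ast f)_P^*(x)=\sup_{s>0}|(P_{s+t}\ast f)(x)|\le f_P^*(x),\qquad x\in\mathbb{R}.
\]
Because $\Phi$ is nondecreasing, this gives $\|(P_t\ast f)_P^*\|_{L^\Phi}^{lux}\le\|f_P^*\|_{L^\Phi}^{lux}$. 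Combined with the quasi-norm equivalence in Proposition~\ref{pro:mainamp}(i), this yields \eqref{eq:pmta2} with a constant independent of $t$ and $f$.

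For \eqref{eq:dqtaqa2} we first prove the claim on a dense subclass and then extend it by a uniform-boundedness argument. By Proposition~\ref{pro:mainamp}(iii), finite sums of multiples of $(\Phi,s)$-atoms are dense in $H^\Phi(\mathbb{R})$: truncating an atomic decomposition leaves a tail whose atomic quasi-norm tends to zero. For $g$ in this class, $g\in L^2$ with compact support and vanishing moments, and $g\in H^\Phi(\mathbb{R})$. We then show $\|P_t\ast g-g\|_{H^\Phi}^{lux}\to 0$ by dominated convergence for $\int\Phi\big((P_t\ast g-g)_P^*/\lambda\big)dx$ at each fixed $\lambda>0$.

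\begin{itemize}
\item[(a)] Pointwise, $(P_t\ast g-g)_P^*(x)=\sup_{s}|P_{s+t}\ast g(x)-P_s\ast g(x)|\to0$ for a.e.\ $x$. Near the support we use the $L^2$ maximal theory: $\|(P_t\ast g-g)_P^*\|_{L^2}\lesssim\|P_t\ast g-g\|_{L^2}\to0$, which gives convergence in measure and hence a.e.\ along a subsequence. That suffices, since the argument can be run along arbitrary subsequences.
\item[(b)] Far from the support, the vanishing moments give the decay $|P_r\ast g(x)|\lesssim \|g\|_{L^1}|I|^{s+1}|x|^{-(s+2)}$, uniformly in $r$. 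This is small when $t$ is small relative to the $x$-scale, or more directly is dominated by $2g_P^*$.
\item[(c)] The domination $\Phi\big((P_t\ast g-g)_P^*/\lambda\big)\le\Phi(2g_P^*/\lambda)$ is integrable, because $g_P^*\in L^\Phi$ and $\Phi\in\Delta_2$ (being of upper type $1$). Dominated convergence then applies.
\end{itemize}

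For general $f$, given $\varepsilon>0$, choose $g$ in the dense class with $\|f-g\|_{H^\Phi}^{lux}<\varepsilon$. Using the quasi-triangle inequality of the Luxemburg quasi-norm, with constant depending on $p$, we get
\[
\|P_t\ast f-f\|\lesssim\|P_t\ast(f-g)\|+\|P_t\ast g-g\|+\|g-f\|\lesssim (C+1)\varepsilon+\|P_t\ast g-g\|,
\]
where the first term is controlled by \eqref{eq:pmta2}. Letting $t\to0$ and then $\varepsilon\to0$ concludes. There is one subtlety: \eqref{eq:pmta2} was stated for bounded distributions, so we must check that $f-g$ is bounded. It is, because $g\in L^2$ with compact support is a bounded distribution.

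The main obstacle is step (a) on the dense class: making the a.e.\ convergence of the maximal function of the difference rigorous and obtaining a uniform integrable majorant. If the $L^2$ route is awkward, the fallback is to take $g\in\mathscr{S}(\mathbb{R})$ with all moments vanishing; Proposition~\ref{pro:main2aaoamp} puts such $g$ in $H^\Phi(\mathbb{R})$, and $P_t\ast g-g\to0$ in every Schwartz seminorm. The bound $\varphi_P^*(x)\le C_N(1+|x|)^{-N}$ from the proof of Proposition~\ref{pro:main2aaoamp}, with constants controlled by seminorms, then gives the convergence directly. Density of this class in $H^\Phi$ would, however, need a separate justification.
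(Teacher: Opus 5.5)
Your argument is correct. The uniform bound is proved exactly as in the paper: the semigroup property gives $(P_t\ast f)_P^*\le f_P^*$, and then Proposition~\ref{pro:mainamp}(i) applies. Your proof of \eqref{eq:dqtaqa2}, however, follows a genuinely different route.

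The paper works directly with the given $f$. It tries to show that $(P_t\ast f-f)_P^*(x)=\sup_{s>0}|u(s+t,x)-u(s,x)|\to0$ for a.e.\ $x$, splitting into two regimes. Small $s$ is handled through the convergence $u(s,\cdot)\to f$ in $\mathscr{S}'(\mathbb{R})$, tested against approximate identities at $x$. Large $s$ is handled through the kernel bound $|\partial_sP_s|\lesssim s^{-1}P_s$. The paper then applies dominated convergence with majorant $2f_P^*$.

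You instead use the standard ``uniform bound plus dense class'' scheme:
\begin{itemize}
\item finite sums of atoms are dense by Proposition~\ref{pro:mainamp}(iii);
\item for such $g\in L^2$, the function $(P_t\ast g-g)_P^*$ is dominated by the Hardy--Littlewood maximal function of $P_t\ast g-g$, so it tends to $0$ in $L^2$ and a.e.\ along subsequences;
\item dominated convergence with majorant $\Phi(2g_P^*/\lambda)$ gives the claim for $g$;
\item \eqref{eq:pmta2}, applied to the bounded distribution $f-g$, together with the quasi-triangle inequality, transfers the claim to $f$.
\end{itemize}

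The paper's route, if completed, would give the stronger pointwise statement for every $f$ without appealing to atoms. Both of its pointwise steps are delicate, though, when $f$ is a genuine distribution. First, the threshold obtained from $\mathscr{S}'$-convergence depends on the test function $\psi_\eta$ and need not survive $\eta\to0$. Second, $|\partial_sP_s|\lesssim s^{-1}P_s$ does not give $|(\partial_sP_s)\ast f(x)|\lesssim s^{-1}f_P^*(x)$ unless $f$ is a nonnegative function. For the bounded function $f(x)=|x|^{ia}$ one computes $|\partial_s(P_s\ast f)(0)|=a\,s^{-1}f_P^*(0)$, so no uniform constant exists. Making that route rigorous would require the nontangential maximal function, Cauchy estimates, and a Fatou-type theorem on nontangential limits.

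Your route needs pointwise information only for compactly supported $L^2$ functions, where it is immediate. Otherwise it uses only tools the paper already has.

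Two small remarks:
\begin{itemize}
\item The $L^2$ maximal argument in (a) already gives a.e.\ convergence along subsequences on all of $\mathbb{R}$, not only near the support. Step (b) and the Schwartz-class fallback are therefore unnecessary.
\item What you flag as the ``main obstacle'' is already settled by (a) and (c).
\end{itemize}
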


\begin{proof}
Since $f$ is a bounded tempered distribution on $\mathbb{R}$, for every $s>0$ the convolution $P_s*f$ is well defined, bounded and continuous on $\mathbb{R}$. In particular, $(P_t)_{t>0}$ is an approximation of the identity. Hence, for all $x\in\mathbb{R}$ and all $s>0$,
\[
\lim_{t\to 0} P_t*(P_s*f)(x)=P_s*f(x).
\]
Using the semigroup property $P_t*(P_s*f)=P_{s+t}*f$, we obtain
\begin{equation}\label{eq:pointwise_limit}
\lim_{t\to 0} P_{s+t}*f(x)=P_s*f(x),
\qquad \forall s>0,\ \forall x\in\mathbb{R}.
\end{equation}
For $t>0$ and $s>0$,
\[
|P_s*(P_t*f)(x)|
=
|P_{s+t}*f(x)|
\le
\sup_{u>0}|P_u*f(x)|
=
f_P^*(x).
\]
Thus
\[
(P_t*f)_P^*(x)\le f_P^*(x).
\]
By the definition of the Hardy–Orlicz norm,
\[
\|P_t*f\|_{H^\Phi(\mathbb{R})}^{lux}
\approx
\|(P_t*f)_P^*\|_{L^\Phi}^{lux}
\le
\|f_P^*\|_{L^\Phi}^{lux}
\approx
\|f\|_{H^\Phi(\mathbb{R})}^{lux}.
\]
Let
\[
u(s,x):=P_s*f(x), \qquad \forall~s>0,\ \forall~ x\in\mathbb{R}.
\]
It is well known that $u(s,\cdot)\longrightarrow f$
in  $\mathscr S'(\mathbb{R})$  as  $s\to 0$.
Hence, for every $\varepsilon>0$ and every $\varphi\in\mathscr S(\mathbb{R})$, there exists
$k_{\varepsilon}(\varphi)>0$ such that
\[
|\langle u(s,\cdot)-f,\varphi\rangle|
<
\varepsilon,
\qquad \forall~ 0<s<k_{\varepsilon}(\varphi).
\]
Let $x\in\mathbb{R}$.  
Choose $\psi\in\mathscr S(\mathbb{R})$ such that
$\int_{\mathbb R}\psi(y)\,dy=1$,
and define
\[
\psi_\eta(y)=\frac1\eta\,\psi\!\left(\frac{x-y}{\eta}\right),
\qquad \forall~ \eta>0 .
\]
Then $(\psi_\eta)_{\eta>0}$ is an approximation of the Dirac mass at $x$, that is $\psi_\eta\longrightarrow \delta_x$ in  $\mathscr S'(\mathbb{R})$  as  $\eta\to 0$.
Applying the previous estimate with $\varphi=\psi_\eta$, we obtain that for every
$\varepsilon>0$ there exists $k_{\varepsilon}(\psi_\eta)>0$ such that
\[
|\langle u(s,\cdot)-f,\psi_\eta\rangle|
<
\varepsilon,
\qquad \forall~ 0<s<k_{\varepsilon}(\psi_\eta).
\]
In particular, for all $0<s,t<k_{\varepsilon}(\psi_\eta)/2$,
\[
|\langle u(s+t,\cdot)-u(s,\cdot),\psi_\eta\rangle|
\le
|\langle u(s+t,\cdot)-f,\psi_\eta\rangle|
+
|\langle u(s,\cdot)-f,\psi_\eta\rangle|
<
2\varepsilon .
\]
Since
\[
\langle u(s+t,\cdot)-u(s,\cdot),\psi_\eta\rangle
=
\int_{\mathbb R}
\bigl(u(s+t,y)-u(s,y)\bigr)\psi_\eta(y)\,dy,
\]
and since $u(s,\cdot)$ is continuous, the family $(\psi_\eta)$ converges to $\delta_x$,
so that
\[
\lim_{\eta\to 0}
\langle u(s+t,\cdot)-u(s,\cdot),\psi_\eta\rangle
=
u(s+t,x)-u(s,x).
\]
Consequently, letting $\eta\to 0$ in the previous inequality yields
\[
|u(s+t,x)-u(s,x)|
\le
2\varepsilon,
\qquad \forall~ 0<s,t<k_{\varepsilon}(x)/2,
\]
for almost every $x\in\mathbb R$. Therefore
\begin{equation}\label{eq:small_s}
\sup_{0<s<k_\varepsilon(x)/2}
|u(s+t,x)-u(s,x)|
\le
2\varepsilon,
\qquad \forall~ 0<t<k_\varepsilon(x)/2 .
\end{equation}
On the other hand, since the Poisson kernel is smooth for $s>0$, the mapping
$s\mapsto u(s,x)$ is $\mathscr C^1$ and
\[
\partial_s u(s,x)
=
(\partial_s P_s)*f(x).
\]
It is well known that
\[
|\partial_s P_s(y)|
\lesssim
\frac1s P_s(y).
\]
Hence
\[
|\partial_s u(s,x)|
\lesssim
\frac1s f_P^*(x).
\]
Using the fundamental theorem of calculus,
\[
u(s+t,x)-u(s,x)
=
\int_s^{s+t}\partial_r u(r,x)\,dr.
\]
Therefore
\[
|u(s+t,x)-u(s,x)|
\lesssim
f_P^*(x)\int_s^{s+t}\frac{dr}{r}
\lesssim
\frac{t}{s}f_P^*(x).
\]
Thus
\begin{equation}\label{eq:large_s}
\sup_{s\ge k_\varepsilon(x)/2}
|u(s+t,x)-u(s,x)|
\lesssim
\frac{t}{k_\varepsilon(x)}f_P^*(x).
\end{equation}
Combining \eqref{eq:small_s} and \eqref{eq:large_s}, we obtain
\[
(P_t*f-f)_P^*(x)
=
\sup_{s>0}|u(s+t,x)-u(s,x)|
\lesssim
2\varepsilon
+
\frac{t}{k_\varepsilon(x)}f_P^*(x), 
\qquad \forall~ 0<s,t<k_{\varepsilon}(x)/2,
\]
for almost every $x\in\mathbb R$. Letting $t\to 0$ gives
\[
(P_t*f-f)_P^*(x)\longrightarrow0
\quad \text{for almost every } x\in\mathbb R .
\]
Moreover,
\[
(P_t*f-f)_P^*(x)
\le
2f_P^*(x).
\]
Since $\Phi\in\mathscr L$, there exists $K>0$ such that
\[
\Phi(2u)\le K\Phi(u).
\]
Thus
\[
\Phi((P_t*f-f)_P^*(x))
\le
K\Phi(f_P^*(x)).
\]
Because $f\in H^\Phi(\mathbb R)$, we have
$\Phi(f_P^*)\in L^1(\mathbb R)$.
Hence, by the dominated convergence theorem,
\[
\int_{\mathbb R}
\Phi\!\left(\varepsilon (P_t*f-f)_P^*\right)dx
\longrightarrow0
\quad (t\to0).
\]
By the definition of the Luxemburg norm,
\[
\|P_t*f-f\|_{H^\Phi(\mathbb R)}^{lux}
\longrightarrow0 .
\]

This completes the proof.
\end{proof}

\medskip

\noindent

Let $\alpha>0$ and $F$ be a harmonic function on $\mathbb{C_{+}}$. The  non-tangential  maximal  function  $F_{\alpha}^{*}$ of  $F$ is defined by 
 $$   F_{\alpha}^{*}(t):=\sup_{z\in \Gamma_{\alpha}(t)}|F(z)|, ~~\forall~ t \in \mathbb{R},  $$
 where  $ \Gamma_{\alpha}(t) := \left\{ x+iy \in \mathbb{C_{+}}: | x-t | < \alpha y  \right\}$. 

\begin{lemme}\label{pro:main6aapaqmaqq4k}
Let  $\Phi \in \mathscr{L} \cup \mathscr{U}$ and   $F$  an analytic function on $\mathbb{C}_{+}$.  The following assertions are equivalent:
\begin{itemize}
  \item[(i)] $F \in H^{\Phi}(\mathbb{C}_{+})$;
  \item[(ii)]  $F_{\alpha}^{*} \in L^{\Phi}(\mathbb{R})$,  for all  $\alpha>0$.
 \end{itemize}
 Moreover, $\|F_{\alpha}^{*}\|_{L^{\Phi}(\mathbb{R})}^{lux} \approx \|F\|_{H^{\Phi}(\mathbb{C}_{+})}^{lux}$.
 \end{lemme}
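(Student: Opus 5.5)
The equivalence will be proved by reducing everything to the classical case through the convex function $\Phi_p(t)=\Phi(t^{1/p})$ of Lemma \ref{pro:maiaqaq1q8}, combined with the subharmonicity of $|F|^{p}$. Let $p$ be the lower type of $\Phi$; if $\Phi\in\mathscr{U}$ we may take $p=1$.

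The implication (ii)$\Rightarrow$(i) is immediate. For $y>0$ and $x\in\mathbb{R}$ the point $x+iy$ lies in $\Gamma_{\alpha}(x)$, so $|F(x+iy)|\le F_{\alpha}^{*}(x)$. Since $\Phi$ is nondecreasing, for every $\lambda>0$,
\[
\int_{\mathbb{R}}\Phi\bigl(|F(x+iy)|/\lambda\bigr)\,dx\le\int_{\mathbb{R}}\Phi\bigl(F_{\alpha}^{*}(x)/\lambda\bigr)\,dx .
\]
Taking $\lambda=\|F_{\alpha}^{*}\|^{lux}_{L^{\Phi}}$ and then the supremum over $y>0$ gives $\|F\|^{lux}_{H^{\Phi}(\mathbb{C}_{+})}\le\|F_{\alpha}^{*}\|^{lux}_{L^{\Phi}}$.

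For (i)$\Rightarrow$(ii), fix $\varepsilon>0$ and set $F_\varepsilon(z)=F(z+i\varepsilon)$. This function is continuous on $\overline{\mathbb{C}_{+}}$ and $|F_\varepsilon|^{p}$ is subharmonic. I first need a priori boundedness of $F_\varepsilon$ on each half-plane $\{\Im z\ge\delta\}$. This comes from the mean value inequality on discs of radius $\delta/2$: it bounds $\Phi(|F(z)|/\lambda)$ by a constant multiple of $\delta^{-2}\int\!\!\int_{D}\Phi(|F|/\lambda)\,dA$, since $\Phi_p$ is convex, $|F|^{p}$ is subharmonic, and Jensen's inequality applies. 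This integral is at most $\delta^{-1}$ by the definition of the norm. So $|F_\varepsilon|^{p}$ is a bounded continuous subharmonic function on the closed half-plane, and it is therefore majorized by the Poisson integral of its boundary values:
\[
|F(x+iy+i\varepsilon)|^{p}\le P_y\ast g_\varepsilon(x),\qquad g_\varepsilon:=|F(\cdot+i\varepsilon)|^{p}.
\]
This Phragmén--Lindelöf-type step, valid without assuming $g_\varepsilon\in L^1$, is the main technical point. To carry it out I would compare $|F_\varepsilon|^{p}$ with $P_y\ast g_\varepsilon$ plus a small multiple of $\Im z$ (or of a harmonic function growing at infinity) and use that $|F_\varepsilon|^{p}$ is bounded.

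Next I control the nontangential maximal function by the Hardy--Littlewood maximal function: $(F_\varepsilon)^{*}_{\alpha}(x)^{p}\le C_\alpha\, M g_\varepsilon(x)$. The function $\Phi_p$ is convex, and I need its lower type to be strictly bigger than $1$ in order to have $\|Mh\|_{L^{\Phi_p}}\lesssim\|h\|_{L^{\Phi_p}}$ (Orlicz maximal theorem, i.e.\ $\Phi_p\in\nabla_2$). To arrange this I would not apply the argument with the exponent $p$ itself but with $p/2$: use subharmonicity of $|F|^{p/2}$ and the function $\Phi_{p/2}(t)=\Phi(t^{2/p})$, which has lower type $2$ and is equivalent to a convex function by Lemma \ref{pro:maiaqaq1q8}(ii) applied to the type. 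With $g_\varepsilon=|F_\varepsilon|^{p/2}$ we have
\[
\int\Phi\bigl((F_\varepsilon)^{*}_{\alpha}/\lambda\bigr)=\int\Phi_{p/2}\bigl(((F_\varepsilon)^{*}_{\alpha})^{p/2}/\lambda^{p/2}\bigr)\le\int\Phi_{p/2}\bigl(C\,Mg_\varepsilon/\lambda^{p/2}\bigr).
\]
By the Orlicz maximal inequality, combined with $\Delta_2$ (which follows from the upper type, or from $\Phi\in\mathscr{L}$), this is at most a constant times $\int\Phi(|F_\varepsilon|/(c\lambda))\le 1$ for $\lambda=c\|F\|^{lux}_{H^\Phi}$. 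This yields $\|(F_\varepsilon)^{*}_{\alpha}\|^{lux}_{L^{\Phi}}\lesssim\|F\|^{lux}_{H^{\Phi}}$ uniformly in $\varepsilon$.

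Finally let $\varepsilon\to0$. For each $z$ we have $(F_\varepsilon)^{*}_{\alpha}(x)\ge|F(z)|$ once $z-i\varepsilon\in\Gamma_\alpha(x)$, so $\liminf_{\varepsilon\to 0}(F_\varepsilon)^{*}_{\alpha}\ge F^{*}_{\alpha}$ pointwise. Fatou's lemma then gives the bound $\|F^{*}_\alpha\|^{lux}_{L^\Phi}\lesssim\|F\|^{lux}_{H^\Phi}$, with constants depending on $\alpha$, which completes the norm equivalence.
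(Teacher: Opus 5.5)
Your proof is correct and follows the standard argument the paper relies on when it defers to the disc case (Lemma 15 of \cite{djefeuto1}). That argument majorizes the subharmonic function $|F|^{p/2}$ by a Poisson integral, dominates $F_{\alpha}^{*}$ by $\bigl(C_\alpha M(|F|^{p/2})\bigr)^{2/p}$, and applies the maximal theorem in $L^{\Phi_{p/2}}$, where passing from $p$ to $p/2$ gives lower type $2>1$ and hence $\nabla_{2}$. Your $\varepsilon$-shift with the Phragm\'en--Lindel\"of step and the final Fatou argument is a self-contained way to obtain the majorization. Alternatively, the boundary function $f$ of Theorem \ref{pro:mainfaaqaqaqq5} gives $|F(x+iy)|^{p/2}\le (P_y\ast |f|^{p/2})(x)$ directly, from $\log|F|\le P_y\ast\log|f|$ and Jensen's inequality. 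Combined with $\|f\|_{L^{\Phi}}^{lux}=\|F\|_{H^{\Phi}}^{lux}$, this removes the need for the limit $\varepsilon\to 0$.
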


The proof of Lemma \ref{pro:main6aapaqmaqq4k} is identical to that of \cite[Lemma 15]{djefeuto1}. Therefore, it will be omitted.

\medskip

\noindent
We now analyze the behavior of $(\Phi,s)$-atoms under the Szeg\"o projection, which is a key step toward proving boundedness results for general elements of $H^{\Phi}(\mathbb{R})$.

\begin{lemme}\label{pro:main2aaqop}
Let  $\Phi\in   \mathscr{L}$ and $s \in \mathbb{N}$ satisfies $s \geq m_{\Phi}$. If $a$ is   $(\Phi,s)-$atom on $\mathbb{R}$ then $\mathcal{P}_{S}(a)$ belongs to  $H^{\Phi}(\mathbb{C}_{+})$. Moreover, there exists a constant $C:=C_{\Phi}>0$ such that for all $\lambda>0$,
\begin{equation}\label{eq:deaqaqa2}
\sup_{y>0}\int_{\mathbb{R}}\Phi\left( \frac{|\mathcal{P}_{S}(a)(x+iy)|}{\lambda}\right)dx \leq C |I|\Phi\left( \frac{|I|^{-1/2}\| a\|_{L^{2}}}{\lambda}\right), 
\end{equation}
where $I$ is is an interval of finite length such that \text{supp} $(a)\subset I$.
\end{lemme}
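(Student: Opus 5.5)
The plan is to fix $y>0$ and write $z=x+iy$. Let $c$ be the center of $I$, and let $\widetilde I$ be the interval with center $c$ and length $2|I|$. I split $\int_{\mathbb{R}}\Phi(|\mathcal{P}_S(a)(x+iy)|/\lambda)\,dx$ into the integral over $\widetilde I$ (the local part) and over $\mathbb{R}\setminus\widetilde I$ (the far part). Each piece will be bounded by $C|I|\Phi(A/\lambda)$ with $A:=|I|^{-1/2}\|a\|_{L^2}$ and $C$ independent of $y$ and $\lambda$. Taking the supremum over $y$ then gives \eqref{eq:deaqaqa2}. Since the right-hand side is finite, $\mathcal{P}_S(a)\in H^{\Phi}(\mathbb{C}_+)$ follows.

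\emph{Local part.} Since $\Phi\in\mathscr{L}$ is concave, Jensen's inequality gives
\[
\int_{\widetilde I}\Phi\Big(\tfrac{|\mathcal{P}_S(a)(x+iy)|}{\lambda}\Big)dx\le |\widetilde I|\,\Phi\Big(\tfrac{1}{\lambda|\widetilde I|}\int_{\widetilde I}|\mathcal{P}_S(a)(x+iy)|\,dx\Big).
\]
By Cauchy--Schwarz the inner average is at most $|\widetilde I|^{-1/2}\|\mathcal{P}_S(a)(\cdot+iy)\|_{L^2}$. On the Fourier side, $\mathcal{P}_S(a)(\cdot+iy)$ has transform $\chi_{[0,\infty)}(\xi)e^{-2\pi y\xi}\hat a(\xi)$, so $\|\mathcal{P}_S(a)(\cdot+iy)\|_{L^2}\le\|a\|_{L^2}$ uniformly in $y$. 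Hence the local part is at most $2|I|\Phi(A/\lambda)$, using that $\Phi$ is nondecreasing.

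\emph{Far part.} Here the vanishing moments enter. For $x\notin\widetilde I$ and $t\in I$, I Taylor expand the kernel $t\mapsto (t-z)^{-1}$ around $c$ to order $s$. Condition (iii) kills the polynomial terms, so
\[
|\mathcal{P}_S(a)(z)|\lesssim \int_I |a(t)|\frac{|t-c|^{s+1}}{|\xi-z|^{s+2}}\,dt
\]
for some $\xi$ between $c$ and $t$. Since $|\xi-z|\ge|x-\xi|\gtrsim|x-c|$, uniformly in $y$, and $\|a\|_{L^1}\le |I|^{1/2}\|a\|_{L^2}=|I|A$, this yields
\[
|\mathcal{P}_S(a)(x+iy)|\le C A\,\big(|I|/|x-c|\big)^{s+2}.
\]
The constant $C$ is absorbed using concavity, since $\Phi(Ct)\le C\Phi(t)$ for $C\ge1$. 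The lower type $p$ applied with $r=(|I|/|x-c|)^{s+2}\le 1$ then gives
\[
\Phi\Big(\tfrac{|\mathcal{P}_S(a)(x+iy)|}{\lambda}\Big)\lesssim \Big(\tfrac{|I|}{|x-c|}\Big)^{(s+2)p}\Phi(A/\lambda).
\]
Integrating over $|x-c|\ge|I|$ gives $C|I|\Phi(A/\lambda)$, provided $(s+2)p>1$.

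The step I expect to need the most care is this integrability condition, together with the uniformity of the kernel estimate in $y$. The condition holds because $s\ge m_\Phi=\lfloor 1/p\rfloor-1$ gives $s+2\ge\lfloor1/p\rfloor+1>1/p$. This is exactly why the hypothesis $s\ge m_\Phi$ is imposed. Adding the local and far parts completes the estimate.
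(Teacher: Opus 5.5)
Your proof is correct and is essentially the paper's argument. It uses the same split at $\widetilde I$ and the same Jensen, Cauchy--Schwarz and $L^2$-boundedness estimate on $\widetilde I$. Off $\widetilde I$ it uses a Taylor expansion of $t\mapsto (t-z)^{-1}$ about the center, in which the moment conditions kill all terms of order $\le s$, followed by the lower type of $\Phi$.

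The only difference is that you stop at order $s$ with a remainder and integrate the decay $(|I|/|x-c|)^{(s+2)p}$ directly, whereas the paper sums the full power series over dyadic annuli. Your version is cleaner: the paper's bound $|t-x_0|\le|I|$ (instead of $|I|/2$) makes $\sum_{n\ge s+1}2^{-k(n+1)}$ diverge on the annulus $k=0$. One small correction: use the integral form of the remainder, since the Lagrange form with a single intermediate point $\xi$ is not valid for complex-valued functions; it gives the same bound with a supremum over $u\in[c,t]$.
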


\begin{proof} 
Without loss of generality, we fix $\lambda=1$.
Suppose that \text{supp} $(a)\subset I$, where $I:=I(x_{0})$ is the interval with center $x_{0} \in \mathbb{R}$. For $y>0$, we have
 $$  \int_{\mathbb{R}}\Phi( |\mathcal{P}_{S}(a)(x+iy)|)dx = \left(\int_{\widetilde{I}}+\int_{\mathbb{R}\backslash \widetilde{I}}\right)\Phi( |\mathcal{P}_{S}(a)(x+iy)|)dx,    $$
where $\widetilde{I}:=\widetilde{I}(x_{0})$ is an interval with center $x_{0}$ and  $|\widetilde{I}|= 2|I|$
Since  $\mathcal{P}_{S}$ is bounded from  $L^{2}(\mathbb{R})$ in  $H^{2}(\mathbb{C}_{+})$, we have
\begin{align*}
\int_{\widetilde{I}}\Phi( |\mathcal{P}_{S}(a)(x+iy)|)dx 
&\lesssim |\widetilde{I}|\Phi\left( \frac{1}{|\widetilde{I}|}\int_{\widetilde{I}} |\mathcal{P}_{S}(a)(x+iy)|dx \right) \\
&\lesssim |\widetilde{I}|\Phi\left( \frac{1}{|\widetilde{I}|}\left(\int_{\mathbb{R}} |\mathcal{P}_{S}(a)(x+iy)|^{2}dx\right)^{1/2}\left( \int_{\mathbb{R}}\chi_{\widetilde{I}}(x)^{2}dx\right)^{1/2} \right) \\
&\lesssim |\widetilde{I}|\Phi\left( \left(\int_{\mathbb{R}} |a(x)|^{2}dx\right)^{1/2}|\widetilde{I}|^{-1/2} \right), \end{align*}
thanks to   H\"older's inequality.  We deduce that
\begin{equation}\label{eq:dta2}
\int_{\widetilde{I}}\Phi( |\mathcal{P}_{S}(a)(x+iy)|)dx \lesssim |I|\Phi\left( \| a\|_{L^{2}}|I|^{-1/2}  \right).
\end{equation}
Let $z \in \mathbb{C}_{+}$. Consider $\varphi_{z}$ the function defined by
$$ \varphi_{z}(t) = \frac{1}{t-z},~~\forall~t\in \mathbb{R}.  $$
By construction, $\varphi_{z} \in \mathscr{C}^{\infty}(\mathbb{R})$ and 
$$ \varphi_{z}^{(n)}(t) = (-1)^{n}\frac{n!}{(t-z)^{n+1}},~~\forall~n \in \mathbb{N},~~\forall~t\in \mathbb{R}.  $$
Since $\varphi_{z}$ admits a Taylor series development around the point $x_{0}$, for all $t\in I$, we have
$$  \varphi_{z}(t)= \sum_{n=0}^{\infty}\frac{\varphi_{z}^{(n)}(x_{0})(t-x_{0})^{n}}{n!} 
= \sum_{n=0}^{\infty} \frac{(-1)^{n}}{(x_{0}-z)^{n+1}}(t-x_{0})^{n}.
   $$
It follows that
\begin{align*}
|\mathcal{P}_{S}(a)(z)| &= \left|\frac{1}{2\pi i} \int_{\mathbb{R}}\frac{a(t)}{t-z}dt    \right|  = \left| \frac{1}{2\pi i}\int_{I} \varphi_{z}(t)a(t)dt    \right| \\
&=\left|\left(\sum_{n=0}^{s}+ \sum_{n=s+1}^{\infty} \right) \int_{I}\frac{(-1)^{n}}{(x_{0}-z)^{n+1}}(t-x_{0})^{n} a(t)dt    \right| \\
&=\left|\sum_{n=0}^{s}\sum_{j=0}^{n}\frac{(-1)^{2n-j}C_{n}^{j}x_{0}^{n-j}}{(x_{0}-z)^{n+1}}\int_{I}a(t)t^{j}dt + \sum_{n=s+1}^{\infty}  \int_{I}\frac{(-1)^{n}}{(x_{0}-z)^{n+1}}(t-x_{0})^{n} a(t)dt    \right| \\
&=\left|\sum_{n=s+1}^{\infty}  \int_{I}\frac{(-1)^{n}}{(x_{0}-z)^{n+1}}(t-x_{0})^{n} a(t)dt    \right|, 
\end{align*}
since $a$ is   $(\Phi,s)-$atom on $\mathbb{R}$. We deduce that 
\begin{equation}\label{eq:dtka2}
 |\mathcal{P}_{S}(a)(z)| \leq  \sum_{n=s+1}^{\infty} \frac{|I|^{n+\frac{1}{2}}}{|x_{0}-z|^{n+1}}\| a\|_{L^{2}}, ~~\forall~ z\in \mathbb{C}_{+},
\end{equation}
since $x_{0}$ and $t$ belong to the interval $I$ and $\int_{I}|a(t)|dt \leq |I|^{1/2}\| a\|_{L^{2}}$.
For $k \geq 0$, put $$ E_{k}(x_{0}):= \{ x \in \mathbb{R}: 2^{k}|I| < |x-x_{0}| \leq 2^{k+1}|I|  \}.    $$
Let $z=x+iy\in \mathbb{C}_{+}$ such that  $x \in E_{k}(x_{0})$. We have
$$ |x_{0}-z| \geq |x_{0}-x| > 2^{k}|I|.   $$
It follows that
\begin{equation}\label{eq:dtkoa2}
 |\mathcal{P}_{S}(a)(x+iy)| \leq \sum_{n=s+1}^{\infty} \frac{1}{2^{k(n+1)}}|I|^{-1/2}\| a\|_{L^{2}}, ~~\forall~x \in E_{k}(x_{0}), ~~\forall~y>0.
\end{equation}
For $y>0$, we have: 
\begin{align*}
\int_{\mathbb{R}\backslash \widetilde{I}}\Phi( |\mathcal{P}_{S}(a)(x+iy)|)dx &= \sum_{k=0}^{\infty} \int_{E_{k}(x_{0})}\Phi( |\mathcal{P}_{S}(a)(x+iy)|)dx \\
&\lesssim \sum_{k=0}^{\infty} \Phi\left( \sum_{n=s+1}^{\infty} \frac{1}{2^{k(n+1)}}|I|^{-1/2}\| a\|_{L^{2}}\right)|E_{k}(x_{0})| \\
&\lesssim \sum_{k=0}^{\infty} \sum_{n=s+1}^{\infty}\Phi\left(  \frac{1}{2^{k(n+1)}}|I|^{-1/2}\| a\|_{L^{2}}\right)2^{k}|I| \\
&\lesssim \sum_{k=0}^{\infty} \sum_{n=s+1}^{\infty}\frac{1}{2^{k(n+1)a_{\Phi}}}\Phi\left( |I|^{-1/2}\| a\|_{L^{2}}\right)2^{k}|I| \\
&= |I|\Phi\left( |I|^{-1/2}\| a\|_{L^{2}}\right)\sum_{k=0}^{\infty} \sum_{n=s+1}^{\infty}\frac{1}{2^{k[(n+1)a_{\Phi}-1]}} \\
&\lesssim |I|\Phi\left( |I|^{-1/2}\| a\|_{L^{2}}\right), 
\end{align*}
since  $(n+1)a_{\Phi}-1 >0$. We deduce that 
\begin{equation}\label{eq:dtkoaqa2}
 \int_{\mathbb{R}\backslash \widetilde{I}}\Phi( |P_{S}(a)(x+iy)|)dx \lesssim |I|\Phi\left( |I|^{-1/2}\| a\|_{L^{2}}\right), ~~\forall~y>0.
 \end{equation}
\end{proof}

Let $f$ a distribution on $\mathbb{R}$. The Hilbert transform $\mathcal{H}(f)$ of $f$ is defined by
\[
\mathcal{H}(f)(x)
:=
\lim_{\varepsilon\to 0}
\frac{1}{\pi}
\int_{|x-t|>\varepsilon}
\frac{f(t)}{x-t}\,dt,
\qquad \forall x\in\mathbb{R},
\]
whenever the limit exists.

\begin{theorem}\label{pro:main5aq5}
Let $\Phi \in \mathscr{L}$ and let $f \in H^{\Phi}(\mathbb{R})$. Then the following assertions hold:
\begin{itemize}
\item[(i)] The Hilbert transform $\mathcal{H}(f)$ belongs to $H^{\Phi}(\mathbb{R})$ and
\begin{equation}\label{eq:deaqaa2}
\|\mathcal{H}(f)\|_{H^{\Phi}(\mathbb{R})}^{\mathrm{lux}}
\leq C_{1}\,\|f\|_{H^{\Phi}(\mathbb{R})}^{\mathrm{lux}},
\end{equation}
where $C_{1}$ is independent of $f$.

\item[(ii)] The Szeg\"o projection $\mathcal{P}_{S}(f)$ belongs to $H^{\Phi}(\mathbb{C}_{+})$. Moreover,
\begin{equation}\label{eq:deaqaaqqa2}
\|\mathcal{P}_{S}(f)\|_{H^{\Phi}(\mathbb{C}_{+})}^{\mathrm{lux}}
\leq C_{2}\,\|f\|_{H^{\Phi}(\mathbb{R})}^{\mathrm{lux}},
\end{equation}
where $C_{2}$ is a positive constant independent of $f$.
\end{itemize}
\end{theorem}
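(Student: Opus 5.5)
We plan to argue through the atomic decomposition of Proposition~\ref{pro:mainamp}(iii), which reduces both statements to uniform estimates on single $(\Phi,s)$-atoms. The key tool for that reduction is the lower type inequality: for $\Phi\in\mathscr L_p$ we have $\Phi(u+v)\le \Phi(u)+\Phi(v)$ up to equivalence, since $\Phi_p(t)=\Phi(t^{1/p})$ is convex and so $\Phi$ is $p$-subadditive in the appropriate sense. Fix $f\in H^\Phi(\mathbb R)$ and $s\ge m_\Phi$. Choose a decomposition $f=\sum_j a_j$ in $\mathscr S'(\mathbb R)$, where each $a_j$ is a multiple of a $(\Phi,s)$-atom supported on $I_j$ and $\Lambda_2(\{a_j\})\le 2\|f\|^{lux}_{H^{\Phi,s}_{at}}\lesssim \|f\|^{lux}_{H^\Phi(\mathbb R)}$.

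For (ii), each $\mathcal P_S(a_j)$ is a well-defined holomorphic function. Lemma~\ref{pro:main2aaqop}, applied to the normalized atom and rescaled, gives for every $\lambda>0$
\[
\sup_{y>0}\int_{\mathbb R}\Phi\Bigl(\frac{|\mathcal P_S(a_j)(x+iy)|}{\lambda}\Bigr)dx\le C|I_j|\,\Phi\Bigl(\frac{\|a_j\|_{L^2}|I_j|^{-1/2}}{\lambda}\Bigr).
\]
We then proceed in three steps.
\begin{itemize}
\item[(1)] Show that $\mathcal P_S(f)(z)=\sum_j\mathcal P_S(a_j)(z)$ pointwise on $\mathbb C_+$. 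We would test against $\varphi_z(t)=(t-z)^{-1}$, which is not Schwartz. The remedy is to note that $\mathcal P_S(f)(z)=\langle f,\varphi_z\rangle/(2\pi i)$ makes sense for $f\in H^\Phi$ with $s$ vanishing moments: subtract the Taylor polynomial of $\varphi_z$, or use the convergence in $H^\Phi$ of the atomic series together with the pointwise estimate \eqref{eq:dtka2}, which gives locally uniform convergence.
\item[(2)] Apply $p$-subadditivity: $\Phi(|\sum_j F_j|/\lambda)\lesssim\sum_j\Phi(|F_j|/\lambda)$. The sharp form is $\Phi(\sum u_j)\le C\sum\Phi(u_j)$, which holds because $\Phi\in\mathscr L$ is concave, so $\Phi(t)/t$ is nonincreasing and $\Phi$ is subadditive.
\item[(3)] Integrate over $x$ at fixed $y$ and take $\lambda=\Lambda_2(\{a_j\})$. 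The sum of the atomic estimates is at most $C$, and the lower type property absorbs the constant into $\lambda$.
\end{itemize}
This gives \eqref{eq:deaqaaqqa2}.

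For (i), the same scheme applies with $\mathcal H$ in place of $\mathcal P_S$, but now we must control the grand maximal function, or by Proposition~\ref{pro:mainamp}(i) the Poisson maximal function, of $\mathcal H(a)$. For a single atom $a$ on $I$ we would split $\mathbb R$ into $\widetilde I=2I$ and the dyadic annuli $E_k(x_0)$. On $\widetilde I$ we use that $\mathcal H$ and the Poisson maximal operator are bounded on $L^2$, and then apply Jensen's inequality with the concave $\Phi$ exactly as in \eqref{eq:dta2}. Off $\widetilde I$ we use the identity $P_t*\mathcal H(a)=Q_t*a$, where $Q_t$ is the conjugate Poisson kernel. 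The Taylor expansion of $Q_t(x-\cdot)$ around $x_0$ to order $s$, together with the vanishing moments of $a$, gives
\[
(\mathcal H a)^*_P(x)\lesssim \frac{|I|^{s+1/2}}{|x-x_0|^{s+2}}\|a\|_{L^2}\quad\text{on } E_k(x_0).
\]
Since $(s+2)p>1$ when $s\ge m_\Phi=\lfloor 1/p\rfloor-1$, the lower type $p$ makes $\sum_k 2^k\,2^{-k(s+2)p}$ converge, as in the proof of Lemma~\ref{pro:main2aaqop}. The molecule $\mathcal H(a)$ is a bounded distribution lying in $L^2$, so the maximal characterization applies, and summing over atoms with subadditivity yields \eqref{eq:deaqaa2}.

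The main obstacle is step (1) and its analogue for $\mathcal H$: justifying that $\mathcal P_S$ and $\mathcal H$ commute with the infinite atomic sum, because $f$ is only a distribution and the kernels are not Schwartz. We would first prove both bounds for finite atomic sums, which are dense in $H^\Phi(\mathbb R)$ by Proposition~\ref{pro:mainamp}(iii). Then we would define the operators on all of $H^\Phi(\mathbb R)$ by continuity, using completeness of $H^\Phi(\mathbb C_+)$, which follows from Lemma~\ref{pro:main6aapaqmaqq4k} since $H^\Phi$ convergence implies locally uniform convergence. Finally, we would check consistency with the integral formula on bounded distributions via the pointwise estimate \eqref{eq:dtka2}.
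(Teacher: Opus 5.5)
Your part (ii) is the paper's argument: atomic decomposition, Lemma~\ref{pro:main2aaqop} at each fixed $y$, subadditivity of the concave $\Phi$, and rescaling $\lambda$ via the lower type. You are more careful than the paper, which writes $\mathcal P_S(f)=\sum_j\mathcal P_S(a_j)$ and $\mathcal H(f)=\sum_j\mathcal H(a_j)$ without justification.

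Part (i) takes a genuinely different route. The paper reduces (i) to the Szeg\"o estimate. It uses the boundary identity $\lim_{y\to0}\mathcal P_S(a_j)(x+iy)=\tfrac12\bigl(a_j+i\mathcal H(a_j)\bigr)$ to dominate $(\mathcal H(a_j))^*_P$ by the vertical maximal function of $\mathcal P_S(a_j)$, and then quotes Lemma~\ref{pro:main2aaqop}. This is economical, since one lemma serves both parts. But that lemma only bounds $\sup_{y>0}\int_{\mathbb R}\Phi(|\mathcal P_S(a)(x+iy)|/\lambda)\,dx$. Summing over atoms instead needs a bound on $\int_{\mathbb R}\Phi\bigl(\sup_{y>0}|\mathcal P_S(a)(x+iy)|/\lambda\bigr)dx$ valid for every $\lambda>0$. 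Lemma~\ref{pro:main6aapaqmaqq4k} gives only a norm equivalence, and per-atom norm bounds do not sum.

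Your direct molecular estimate has two pieces:
\begin{itemize}
\item[(a)] on $2I$: $L^2$ boundedness of $\mathcal H$ and of the Poisson maximal operator, plus Jensen's inequality;
\item[(b)] off $2I$: Taylor expansion of $Q_t(x-\cdot)$ about $x_0$, using $|\partial^{s+1}Q_t(w)|\lesssim|w|^{-s-2}$ uniformly in $t$, which follows from $\pi Q_t(w)=\mathrm{Re}\,(w-it)^{-1}$.
\end{itemize}
Together these give exactly the maximal modular inequality, uniformly in $\lambda$. So your route closes the step the paper passes over.

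Three small corrections.
\begin{itemize}
\item[(1)] The off-support bound should read $(\mathcal H a)^*_P(x)\lesssim |I|^{s+3/2}|x-x_0|^{-s-2}\|a\|_{L^2}$. With your exponent $s+1/2$, the estimate on $E_k(x_0)$ is not of the form $2^{-k(s+2)}|I|^{-1/2}\|a\|_{L^2}$, which your count $\sum_k 2^k2^{-k(s+2)p}$ presupposes.
\item[(2)] Subadditivity of $\Phi$ comes from $t\mapsto\Phi(t)/t$ being nonincreasing, as you correctly say in step (2). It does not come from convexity of $\Phi_p$, which gives superadditivity of $\Phi_p$, the opposite direction.
\item[(3)] In the extension step, your finite-sum bound is in terms of $\Lambda_2$ of that particular finite decomposition, not $\|g\|^{\mathrm{lux}}_{H^\Phi(\mathbb R)}$. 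So extending ``by continuity'' from the dense class would require the equivalence of finite atomic and $H^\Phi$ quasi-norms for $L^2$-atoms. It is simpler to take partial sums of one near-optimal decomposition of $f$. These are Cauchy, because concavity gives $\Phi(h/\varepsilon)\le(\Lambda/\varepsilon)\Phi(h/\Lambda)$, so the modular tails tend to $0$ at every scale. Then identify the limit with $\mathcal P_S(f)$ (resp.\ $\mathcal H(f)$) via the locally uniform convergence you describe in step (1).
\end{itemize}
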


\begin{proof}
We only prove (i) and (ii) successively. We may assume without loss of generality that $f \neq 0$.

\medskip

\noindent\textbf{Proof of (i).}
Let $s \in \mathbb{N}$ be such that $s \ge m_{\Phi}$. By the atomic decomposition of $H^{\Phi}(\mathbb{R})$, there exists a sequence of $(\Phi,s)$-atoms $(a_j)_j$ such that
\[
f=\sum_{j} a_j \quad \text{in } \mathscr{S}'(\mathbb{R}),
\quad\text{and}\quad
\|f\|_{H^{\Phi}(\mathbb{R})}^{\mathrm{lux}}
\approx 
\|f\|_{H^{\Phi,s}_{\mathrm{at}}(\mathbb{R})}^{\mathrm{lux}}.
\]
Let $I_j$ be intervals such that $\mathrm{supp}(a_j)\subset I_j$.

Since $a_j \in L^2(\mathbb{R})$ and $\mathcal{H}$ is bounded on $L^2(\mathbb{R})$, we have $\mathcal{H}(a_j)\in L^2(\mathbb{R})$. Moreover, using the boundary behavior of the Szeg\"o projection, we have for almost every $x\in\mathbb{R}$,
\[
\lim_{y\to 0}\mathcal{P}_S(a_j)(x+iy)
=\frac12\big(a_j(x)+i\mathcal{H}(a_j)(x)\big).
\]
This implies the pointwise control
\[
(\mathcal{H}(a_j))_P^*(x)
\lesssim (\mathcal{P}_S(a_j))^*(x).
\]

By Lemma~\ref{pro:main2aaqop}, for all $y>0$,
\[
\int_{\mathbb{R}}
\Phi\!\left(
\frac{|\mathcal{P}_S(a_j)(x+iy)|}
{\|f\|_{H^{\Phi,s}_{\mathrm{at}}}^{\mathrm{lux}}}
\right)dx
\lesssim
|I_j|\,
\Phi\!\left(
\frac{|I_j|^{-1/2}\|a_j\|_{L^2}}
{\|f\|_{H^{\Phi,s}_{\mathrm{at}}}^{\mathrm{lux}}}
\right).
\]
Consequently,
\[
\int_{\mathbb{R}}
\Phi\!\left(
\frac{|(\mathcal{H}(a_j))_P^*(x)|}
{\|f\|_{H^{\Phi,s}_{\mathrm{at}}}^{\mathrm{lux}}}
\right)dx
\lesssim
|I_j|\,
\Phi\!\left(
\frac{|I_j|^{-1/2}\|a_j\|_{L^2}}
{\|f\|_{H^{\Phi,s}_{\mathrm{at}}}^{\mathrm{lux}}}
\right).
\]

Using the linearity of $\mathcal{H}$ and the subadditivity of $\Phi$, we obtain
\[
\int_{\mathbb{R}}
\Phi\!\left(
\frac{|(\mathcal{H}(f))_P^*(x)|}
{\|f\|_{H^{\Phi}(\mathbb{R})}^{\mathrm{lux}}
}
\right)dx
\lesssim
\sum_j
|I_j|\,
\Phi\!\left(
\frac{|I_j|^{-1/2}\|a_j\|_{L^2}}
{\|f\|_{H^{\Phi,s}_{\mathrm{at}}}^{\mathrm{lux}}}
\right).
\]
By the definition of the atomic Luxemburg norm, the right-hand side is bounded by a constant independent of $f$. Hence,
\[
\int_{\mathbb{R}}
\Phi\!\left(
\frac{|(\mathcal{H}(f))_P^*(x)|}
{\|f\|_{H^{\Phi}(\mathbb{R})}^{\mathrm{lux}}
}
\right)dx
\lesssim 1,
\]
which proves \eqref{eq:deaqaa2}.

\medskip

\noindent\textbf{Proof of (ii).}
Using again the atomic decomposition of $f$ and Lemma~\ref{pro:main2aaqop}, for every $y>0$,
\begin{align*}
\int_{\mathbb{R}}
\Phi\!\left(
\frac{|\mathcal{P}_S(f)(x+iy)|}
{\|f\|_{H^{\Phi}(\mathbb{R})}^{\mathrm{lux}}}
\right)dx
&\lesssim
\sum_j
\int_{\mathbb{R}}
\Phi\!\left(
\frac{|\mathcal{P}_S(a_j)(x+iy)|}
{\|f\|_{H^{\Phi,s}_{\mathrm{at}}}^{\mathrm{lux}}}
\right)dx \\
&\lesssim
\sum_j
|I_j|\,
\Phi\!\left(
\frac{|I_j|^{-1/2}\|a_j\|_{L^2}}
{\|f\|_{H^{\Phi,s}_{\mathrm{at}}}^{\mathrm{lux}}}
\right).
\end{align*}
By the definition of the Luxemburg norm, the right-hand side is bounded. Therefore,
\[
\sup_{y>0}
\int_{\mathbb{R}}
\Phi\!\left(
\frac{|\mathcal{P}_S(f)(x+iy)|}
{\|f\|_{H^{\Phi}(\mathbb{R})}^{\mathrm{lux}}}
\right)dx
\lesssim 1,
\]
which yields \eqref{eq:deaqaaqqa2} and completes the proof.
\end{proof}

\medskip

%\noindent
%Theorem~\ref{pro:main5aq5} highlights the stability of $H^{\Phi}(\mathbb{R})$ under singular integral operators and analytic projections, which are central tools in harmonic analysis.

The following is a well known fact.
\begin{proposition}[Proposition 3.16, \cite{djesehb1}]\label{pro:main6mq4}
Let  $\Phi \in \mathscr{L} \cup \mathscr{U}$. For  $F \in H^{\Phi}(\mathbb{C}_{+})$, we have
 \begin{equation}\label{eq:ualp5leson}
 |F(x+iy)|\leq \Phi^{-1}\left(\frac{2}{\pi y}\right)\|F\|_{H^{\Phi}(\mathbb{C}_{+})}^{lux}, ~~ \forall~x+iy \in \mathbb{C_{+}}.\end{equation}
 \end{proposition}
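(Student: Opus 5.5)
The plan is to derive the pointwise bound from subharmonicity of a suitable power of $|F|$, combined with an averaging argument on a disc and Jensen's inequality for a convex modification of $\Phi$. By homogeneity we may assume $\|F\|_{H^{\Phi}(\mathbb{C}_{+})}^{lux}=1$. In that case
\[
\sup_{t>0}\int_{\mathbb{R}}\Phi(|F(x+it)|)\,dx\le 1 .
\]
Everything after this normalisation is a mean-value estimate.

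Fix $z_0=x_0+iy$. The first step is to choose an exponent $p$ depending on where $\Phi$ lives:
\[
p\ \text{is the lower type of } \Phi \text{ if } \Phi\in\mathscr{L}, \qquad p=1 \text{ if } \Phi\in\mathscr{U}.
\]
By Lemma~\ref{pro:maiaqaq1q8}(ii) and Remark~\ref{pro:main 5aqaqq2pl}, the function $\Phi_p(t)=\Phi(t^{1/p})$ may be taken convex and increasing. Since $|F|^p$ is subharmonic, the sub-mean value property on the disc $D(z_0,y/2)\subset\mathbb{C}_+$ gives
\[
|F(z_0)|^p\le \frac{1}{|D|}\int_{D}|F|^p\,dA .
\]
Applying Jensen's inequality with the convex function $\Phi_p$ then yields
\[
\Phi(|F(z_0)|)\le \frac{1}{|D|}\int_D \Phi(|F(u+iv)|)\,du\,dv .
\]

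Next we enlarge the disc to the strip $\{y/2<v<3y/2\}$ and use Fubini. Each horizontal line integral is at most $1$, so
\[
\Phi(|F(z_0)|)\le \frac{y}{\pi y^2/4}=\frac{4}{\pi y}.
\]
This already gives the claim up to a constant, namely $|F(z_0)|\le \Phi^{-1}(4/(\pi y))$. To get the sharp constant $\frac{2}{\pi y}$ I would instead use the harmonic majorant. The function $\Phi_p(|F|^p)$ is subharmonic and its boundary traces have uniformly bounded $L^1$ norms. On each half-plane $\{\operatorname{Im}>\varepsilon\}$ it is dominated by the Poisson integral of its values on $\operatorname{Im}=\varepsilon$. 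Since $\sup_x P_t(x)=\frac{1}{\pi t}$, letting $\varepsilon\to0$ gives $\Phi(|F(z_0)|)\le \frac{1}{\pi y}\le\frac{2}{\pi y}$. Inverting $\Phi$, which is increasing, gives the statement.

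The main obstacle is justifying the Poisson majorant on $\{\operatorname{Im}>\varepsilon\}$. The subharmonic function is unbounded a priori, so a Phragmén–Lindelöf type argument is needed. I would first use the crude disc estimate above, which shows that $F$ is bounded on $\{\operatorname{Im}>\varepsilon\}$. With that boundedness in hand, the least harmonic majorant argument applies, for instance via a conformal map to the disc or a truncation by large semicircles. If this step becomes too delicate, the disc-averaging argument alone still proves \eqref{eq:ualp5leson} with a universal constant, and that weaker form is what the rest of the paper actually needs.
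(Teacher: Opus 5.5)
The paper does not prove this proposition; it is quoted from \cite{djesehb1}, so there is no proof in the text to compare against. Your argument is correct and complete.

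Its core is the natural route: $\Phi(|F|)=\Phi_p(|F|^p)$ is subharmonic because $\Phi_p$ is convex and increasing, and it is dominated by its Poisson integral over a lower horizontal line. The constant $\frac{2}{\pi y}$ in the statement is exactly $\sup_x P_{y/2}(x)$, i.e.\ what one obtains by majorizing on the line $\mathrm{Im}\,w=y/2$. Your limit $\varepsilon\to0$ even yields the sharper $\frac{1}{\pi y}$.

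What your proposal adds is a clean justification of that majorization. The disc-average plus Jensen bound $\Phi(|F(w)|)\le \frac{4}{\pi\,\mathrm{Im}\,w}$ makes $w\mapsto\Phi(|F(w+i\varepsilon)|)$ bounded and continuous on the closed half-plane. Its difference with the Poisson integral of its trace is then a bounded subharmonic function vanishing on $\mathbb{R}$. Subtracting $\delta\log|z+i|$ and using the maximum principle on large half-discs shows this difference is $\le 0$.

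Two small points deserve a sentence each in a write-up:
\begin{itemize}
\item The normalization $\int_{\mathbb{R}}\Phi(|F(x+it)|)\,dx\le 1$, with $\lambda=1$ exactly, follows from continuity of $\Phi$ and monotone convergence as $\lambda\downarrow 1$.
\item The sharp form of the inequality, with no constant in front of $\Phi^{-1}$, uses that $\Phi_p$ is genuinely convex and not merely equivalent to a convex function. This is the paper's standing convention built on Lemma~\ref{pro:maiaqaq1q8}(ii), and it is exactly what you invoked.
\end{itemize}
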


From the last two results, we obtain the following.
\begin{corollaire}\label{pro:main5aqpq5}
Let $\Phi \in \mathscr{L}$ and let $f$ be a bounded tempered distribution on $\mathbb{R}$. If $f \in H^{\Phi}(\mathbb{R})$, then for every $z \in \mathbb{C}_{+}$, we have
\begin{equation}\label{eq:uasoqan}
\left|\int_{\mathbb{R}}\frac{f(t)}{t-\overline{z}}\,dt\right|
\leq C\, \|f\|_{H^{\Phi}(\mathbb{R})}^{\mathrm{lux}},
\end{equation}
where $C>0$ is a constant depending only on $\Phi$ and $z$.
\end{corollaire}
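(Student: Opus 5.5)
The plan is to express the integral through the Szegő projection evaluated at the reflected point, and then combine Theorem~\ref{pro:main5aq5}(ii) with the pointwise estimate of Proposition~\ref{pro:main6mq4}. Fix $z = x+iy \in \mathbb{C}_{+}$. The function $t\mapsto 1/(t-\overline{z})$ is smooth but only decays like $|t|^{-1}$, so the integral is not a priori a pairing with a Schwartz function. We therefore interpret it the same way $\mathcal{P}_S(f)$ is interpreted in Theorem~\ref{pro:main5aq5}(ii): through the atomic decomposition, $f=\sum_j a_j$, with $\int f(t)(t-\overline{z})^{-1}dt := \sum_j \int a_j(t)(t-\overline{z})^{-1}dt$.

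The key identity is
\[
\frac{1}{t-\overline{z}} = \overline{\left(\frac{1}{t-z}\right)} \quad (t\in\mathbb{R}).
\]
Applying it to each atom gives
\[
\int_{\mathbb{R}}\frac{a_j(t)}{t-\overline{z}}\,dt = \overline{\int_{\mathbb{R}}\frac{\overline{a_j(t)}}{t-z}\,dt} = \overline{2\pi i\,\mathcal{P}_S(\overline{a_j})(z)}.
\]
The function $\overline{a_j}$ is again a $(\Phi,s)$-atom, since support, $L^2$ size and vanishing moments are preserved under conjugation. Hence $\overline{f}=\sum_j\overline{a_j}$ has the same atomic Luxemburg quantity, and so $\|\overline{f}\|_{H^{\Phi}(\mathbb{R})}^{\mathrm{lux}}\approx\|f\|_{H^{\Phi}(\mathbb{R})}^{\mathrm{lux}}$ by Proposition~\ref{pro:mainamp}(iii). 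Summing over $j$, we obtain
\[
\left|\int_{\mathbb{R}}\frac{f(t)}{t-\overline{z}}\,dt\right| = 2\pi\,|\mathcal{P}_S(\overline{f})(z)|.
\]
Alternatively, one can avoid conjugating by working with the reflected point. Writing $\frac{1}{t-\overline z}=\frac{1}{t-z}+\frac{\overline z - z}{(t-z)(t-\overline z)}$, the second term is a genuine Schwartz-type kernel only up to decay $|t|^{-2}$, so the conjugation route is cleaner.

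Next we apply Theorem~\ref{pro:main5aq5}(ii), which gives $\mathcal{P}_S(\overline f)\in H^{\Phi}(\mathbb{C}_+)$ with
\[
\|\mathcal{P}_S(\overline f)\|_{H^{\Phi}(\mathbb{C}_{+})}^{\mathrm{lux}}\le C_2\|\overline f\|_{H^{\Phi}(\mathbb{R})}^{\mathrm{lux}}\lesssim\|f\|_{H^{\Phi}(\mathbb{R})}^{\mathrm{lux}}.
\]
Then Proposition~\ref{pro:main6mq4} yields
\[
|\mathcal{P}_S(\overline f)(z)|\le \Phi^{-1}\!\left(\tfrac{2}{\pi y}\right)\|\mathcal{P}_S(\overline f)\|_{H^{\Phi}(\mathbb{C}_{+})}^{\mathrm{lux}}.
\]
Combining the two bounds gives \eqref{eq:uasoqan} with $C=2\pi C_2 C'\,\Phi^{-1}(2/(\pi\,\mathrm{Im} z))$, which depends only on $\Phi$ and $z$.

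The main obstacle is justifying that the sum over atoms of the pointwise values converges and equals $\mathcal{P}_S(\overline f)(z)$, independently of the chosen decomposition. I would handle this using the pointwise bound for a single atom from \eqref{eq:dtka2}, together with the $L^2$ bound near $I_j$. Each $|\mathcal{P}_S(a_j)(z)|$ is controlled by a constant depending on $z$ times $\Phi^{-1}$-type atomic sizes, and the Luxemburg summability of these sizes gives absolute convergence. The lower type of $\Phi$ allows the quantity $\sum_j|I_j|\Phi(\cdot)\le1$ to dominate the sum of the values. Since the resulting limit is the same function on $\mathbb{C}_+$ as in Theorem~\ref{pro:main5aq5}(ii), the identification holds.
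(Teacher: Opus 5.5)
Your proof is correct, at the same level of rigor with which the paper itself handles $\int_{\mathbb{R}} f(t)(t-\overline{z})^{-1}\,dt$ and $\mathcal{P}_S(f)$ for distributions, but it takes a different route from the paper.

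\textbf{The paper's route.} It uses exactly the decomposition you mention and set aside. From $\frac{y}{(t-x)^2+y^2}=\frac{i}{2}\bigl(\frac{1}{t-\overline{z}}-\frac{1}{t-z}\bigr)$ it writes $\int_{\mathbb{R}}\frac{f(t)}{t-\overline{z}}\,dt=-2i\pi(P_y*f)(x)+2i\pi\,\mathcal{P}_S(f)(z)$. It bounds the Cauchy term as you do (Theorem~\ref{pro:main5aq5}(ii) and Proposition~\ref{pro:main6mq4}). It bounds the Poisson term by a pointwise estimate for $P_y*f$ combined with Proposition~\ref{pro:mainqaq2aop}. The $|t|^{-2}$ decay is harmless there, because $P_y*f$ is well defined for bounded distributions.

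\textbf{What your route buys.} Your identity $\frac{1}{t-\overline{z}}=\overline{(t-z)^{-1}}$ collapses everything into the single quantity $2\pi|\mathcal{P}_S(\overline{f})(z)|$. You therefore need no pointwise bound on $P_y*f$; the paper states that step without detail, and its constant must depend on $y$. You also get an explicit constant of size $\Phi^{-1}(2/(\pi\,\mathrm{Im}\,z))$. Your one extra ingredient, invariance of $H^{\Phi}(\mathbb{R})$ under conjugation, is even simpler than via atoms. Since $\mathcal{S}_{m}(\mathbb{R})$ is closed under conjugation and $\varphi_t*\overline{f}=\overline{\overline{\varphi_t}*f}$, you have $(\overline{f})^{*}_{m_\Phi}=f^{*}_{m_\Phi}$, so the norms coincide.

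\textbf{What the paper's route buys.} It produces the Poisson--Cauchy identity, which is reused in the proof of Theorem~\ref{pro:main6mmq4p} to turn the vanishing of $\int_{\mathbb{R}} f(t)(t-\overline{z})^{-1}\,dt$ into $P_y*f=\mathcal{P}_S(f)$. With your approach that identity has to be supplied separately, as $(P_y*f)(x)=\mathcal{P}_S(f)(z)+\overline{\mathcal{P}_S(\overline{f})(z)}$.

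\textbf{Your convergence sketch proves more than you use.} Let $x_j$ be the centre of $I_j$, $\mu_j=|I_j|^{-1/2}\|a_j\|_{L^2}$, and $\sum_j|I_j|\Phi(\mu_j/\lambda)\le 1$. For atoms with $|I_j|\le|x_j-z|/2$, use \eqref{eq:dtka2} applied to $\overline{a_j}$, the upper type $1/p$ of $\Phi^{-1}$, and $s+2>1/p$. For the remaining atoms, use Cauchy--Schwarz with $\|(\cdot-\overline{z})^{-1}\|_{L^2}=(\pi/y)^{1/2}$. Since $t\mapsto t/\Phi(t)$ is nondecreasing and $\mu_j/\lambda\le\Phi^{-1}(1/|I_j|)$, each term is $\lesssim\Phi^{-1}(2/y)\,\lambda\,|I_j|\Phi(\mu_j/\lambda)$. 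Summing gives the corollary directly, without Theorem~\ref{pro:main5aq5} or Proposition~\ref{pro:main6mq4}.

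\textbf{One loose point.} Independence of the chosen decomposition is left implicit, in your write-up as in the paper. Appealing to Theorem~\ref{pro:main5aq5}(ii) does not settle it, since $\mathcal{P}_S(f)$ there is itself built from an atomic decomposition.
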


\begin{proof}
Since $f$ is a bounded tempered distribution on $\mathbb{R}$, the function $(x,y)\mapsto (P_y*f)(x)$ is harmonic on $\mathbb{C}_{+}$. In particular, for every $z=x+iy \in \mathbb{C}_{+}$, we have
\begin{equation}\label{eq:uasoqaqan}
|(P_y*f)(x)| \lesssim \|(P_y*f)\|_{H^{\Phi}(\mathbb{R})}^{\mathrm{lux}}
\lesssim \|f\|_{H^{\Phi}(\mathbb{R})}^{\mathrm{lux}},
\end{equation}
by Proposition~\ref{pro:mainqaq2aop}.

Moreover, by Theorem~\ref{pro:main5aq5}, the Szeg\"o projection $\mathcal{P}_S(f)$ belongs to $H^{\Phi}(\mathbb{C}_{+})$ and satisfies
\[
\|\mathcal{P}_S(f)\|_{H^{\Phi}(\mathbb{C}_{+})}^{\mathrm{lux}}
\lesssim \|f\|_{H^{\Phi}(\mathbb{R})}^{\mathrm{lux}}.
\]
Consequently, using Proposition~\ref{pro:main6mq4}, we obtain
\begin{equation}\label{eq:uasoqaqn}
|\mathcal{P}_S(f)(z)| \leq \Phi^{-1}\left(\frac{2}{\pi y}\right) \|\mathcal{P}_S(f)\|_{H^{\Phi}(\mathbb{C}_{+})}^{\mathrm{lux}}
\lesssim \Phi^{-1}\left(\frac{2}{\pi y}\right) \|f\|_{H^{\Phi}(\mathbb{R})}^{\mathrm{lux}},
\end{equation}
for all $z=x+iy \in \mathbb{C}_{+}$.

Let $z=x+iy \in \mathbb{C}_{+}$. For every $t \in \mathbb{R}$, we have the identity
\[
\frac{y}{(t-x)^2+y^2}
=
\frac{i}{2}\left(\frac{1}{t-\overline{z}}-\frac{1}{t-z}\right).
\]
Multiplying by $f(t)$ and integrating over $\mathbb{R}$, we obtain
\[
\int_{\mathbb{R}}\frac{f(t)}{t-\overline{z}}\,dt
=
-2i\pi (P_y*f)(x)
+
2i\pi \mathcal{P}_S(f)(z).
\]
Combining this identity with estimates \eqref{eq:uasoqaqan} and \eqref{eq:uasoqaqn}, we conclude that
\[
\left|\int_{\mathbb{R}}\frac{f(t)}{t-\overline{z}}\,dt\right|
\leq C_{\Phi, z} \|f\|_{H^{\Phi}(\mathbb{R})}^{\mathrm{lux}},
\]
which completes the proof.
\end{proof}

\medskip

\noindent
We now turn to a fundamental representation result, showing that elements of $H^{\Phi}(\mathbb{C}_{+})$ can be described in terms of boundary distributions via the Poisson integral.

\begin{theorem}\label{pro:main6mmq4}
Let $\Phi \in \mathscr{L}$ and let $F$ be an analytic function on $\mathbb{C}_{+}$. 
The following assertions are equivalent:
\begin{itemize}
\item[(i)] $F \in H^{\Phi}(\mathbb{C}_{+})$;
\item[(ii)] There exists a bounded tempered distribution $f$ on $\mathbb{R}$ such that 
$f \in H^{\Phi}(\mathbb{R})$ and
\[
F(x+iy)=(P_{y}\ast f)(x), \qquad \forall\, x+iy \in \mathbb{C}_{+}.
\]
\end{itemize}
Moreover,
\[
\|F\|_{H^{\Phi}(\mathbb{C}_{+})}^{lux} \approx 
\|f\|_{H^{\Phi}(\mathbb{R})}^{lux}
\]
and
\[
\lim_{y\to 0}\|F(\cdot+iy)-f\|_{H^{\Phi}(\mathbb{R})}^{lux}=0.
\]
\end{theorem}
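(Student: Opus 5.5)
We prove (ii)$\Rightarrow$(i) first, then the harder converse (i)$\Rightarrow$(ii), which needs a boundary distribution built from $F$.

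\textbf{(ii)$\Rightarrow$(i).} Assume $f$ is a bounded tempered distribution in $H^{\Phi}(\mathbb{R})$ and $F(x+iy)=(P_y\ast f)(x)$. Then for every $y>0$,
\[
|F(x+iy)|\le f_P^*(x).
\]
Hence, by Proposition~\ref{pro:mainamp}(i),
\[
\sup_{y>0}\|F(\cdot+iy)\|_{L^\Phi}^{lux}\le \|f_P^*\|_{L^\Phi}^{lux}\approx\|f\|_{H^\Phi(\mathbb{R})}^{lux}.
\]
This gives $F\in H^{\Phi}(\mathbb{C}_+)$ together with one of the two norm inequalities.

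\textbf{(i)$\Rightarrow$(ii).} Let $F\in H^{\Phi}(\mathbb{C}_+)$ and set $F_\varepsilon(z):=F(z+i\varepsilon)$ for $\varepsilon>0$.
\begin{itemize}
\item[(a)] By Proposition~\ref{pro:main6mq4}, $F_\varepsilon$ is bounded on $\mathbb{C}_+$ by $\Phi^{-1}(2/(\pi\varepsilon))\|F\|^{lux}$. It is also continuous up to the boundary and holomorphic.
\item[(b)] Bounded harmonic functions on the half-plane with continuous bounded boundary values are Poisson integrals. Therefore $F(x+i(y+\varepsilon))=(P_y\ast F_\varepsilon(\cdot))(x)$, where $F_\varepsilon(\cdot)$ denotes the boundary function $x\mapsto F(x+i\varepsilon)$.
\item[(c)] Consequently $(F_\varepsilon(\cdot))_P^*(x)=\sup_{y>0}|F(x+i(y+\varepsilon))|\le F_\alpha^*(x)$ for any $\alpha>0$.
\item[(d)] By Lemma~\ref{pro:main6aapaqmaqq4k} and Proposition~\ref{pro:mainamp}(i), the family $f_\varepsilon:=F(\cdot+i\varepsilon)$ is bounded in $H^{\Phi}(\mathbb{R})$, uniformly in $\varepsilon$, with norm $\lesssim\|F\|_{H^\Phi(\mathbb{C}_+)}^{lux}$.
\end{itemize}

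\textbf{Extracting the boundary distribution.} By Proposition~\ref{pro:maaqqaq5}, $|\langle f_\varepsilon,\varphi\rangle|\lesssim\mathcal N_{m_\Phi}(\varphi)\|F\|^{lux}$. So $(f_\varepsilon)$ is a bounded family in $\mathscr S'(\mathbb{R})$. By Banach–Alaoglu on the separable Fréchet space $\mathscr S$, there is a sequence $\varepsilon_k\to0$ with $f_{\varepsilon_k}\to f$ in $\mathscr S'$.

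Since $P_y\in L^1$ but is not Schwartz, we cannot pass to the limit in $P_y\ast f_{\varepsilon_k}$ directly. Instead, fix $\varphi\in\mathscr S$ and observe that $P_y\ast\varphi\in\mathscr S$ is not quite true either, since $P_y\ast\varphi$ decays only like $|x|^{-2}$. We handle this through the estimate of Proposition~\ref{pro:maaqqaq5} on $f_\varepsilon$, using the uniform bounds $(f_\varepsilon)^*_{m_\Phi}\le C F^*_\alpha$.

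A cleaner route is the following. Boundedness of $f$ follows from
\[
|\varphi\ast f_\varepsilon(x)|\lesssim \inf_{|z-x|<1}(f_\varepsilon)^*_{m_\Phi}(z)\lesssim\Phi^{-1}(1/2)\,\|F\|^{lux},
\]
which holds uniformly in $\varepsilon$ and passes to the limit. Then $P_y\ast f$ is defined. Moreover $(P_y\ast f_{\varepsilon_k})(x)=F(x+i(y+\varepsilon_k))\to F(x+iy)$ pointwise. It remains to show $P_y\ast f_{\varepsilon_k}(x)\to P_y\ast f(x)$. For this, split $P_y=\chi P_y+(1-\chi)P_y$ with $\chi$ a smooth cutoff. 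The compactly supported part converges by distributional convergence after mollification. The tail is uniformly small by the uniform boundedness of the $f_\varepsilon$ as bounded distributions, taken in the $L^\infty$-averaged sense (write $f_\varepsilon=\psi\ast f_\varepsilon+$ remainder via a Calderón reproducing formula).

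I expect this identification $F=P_y\ast f$ to be the main obstacle. If it becomes messy, I would alternatively identify $f$ via the $H^\Phi$ limit directly: show that $(f_\varepsilon)$ is Cauchy in $H^\Phi(\mathbb{R})$ using Proposition~\ref{pro:mainqaq2aop} applied to $f_{\varepsilon}$. Indeed $f_{\varepsilon+t}=P_t\ast f_\varepsilon$, and dominated convergence with the majorant $F^*_\alpha$ gives $(f_{\varepsilon}-f_{\varepsilon'})_P^*\to0$ almost everywhere. The Cauchy limit then lies in the complete space $H^\Phi(\mathbb{R})$.

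\textbf{Norm equivalence and convergence.} Once $F=P_y\ast f$ is established, lower semicontinuity (Fatou) yields
\[
f_P^*\le\liminf_k (f_{\varepsilon_k})_P^*\le F_\alpha^*,
\]
which gives $\|f\|_{H^\Phi}\lesssim\|F\|^{lux}$. Finally, $F(\cdot+iy)-f=P_y\ast f-f$, so the convergence $\lim_{y\to0}\|F(\cdot+iy)-f\|_{H^\Phi(\mathbb{R})}^{lux}=0$ follows directly from \eqref{eq:dqtaqa2} of Proposition~\ref{pro:mainqaq2aop}.
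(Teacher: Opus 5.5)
Your proposal follows the paper's proof: shift $F$ up by $i\varepsilon$ to get bounded Poisson integrals, bound the traces $F(\cdot+i\varepsilon)$ uniformly in $H^{\Phi}(\mathbb{R})$ through $F^{*}_{\alpha}$, extract an $\mathscr{S}'$-limit $f$, identify $F(\cdot+iy)=P_y\ast f$, and finish with the norm estimate and Proposition~\ref{pro:mainqaq2aop}. Your direct proof of (ii)$\Rightarrow$(i) via $|F(x+iy)|\le f_P^*(x)$ and your explicit check that $f$ is a bounded distribution are small improvements on the paper.

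You are right that the identification $P_y\ast f_{\varepsilon_k}(x)\to P_y\ast f(x)$ is the delicate step. The paper only invokes density of $\mathscr{S}$ in $L^1$, and that does not suffice because $\|f_{\varepsilon}\|_{L^\infty}$ blows up as $\varepsilon\to0$. Your splitting of $P_y$ is the right repair: the tail is controlled by the uniform bounded-distribution estimate together with the vanishing moments in the reproducing formula. It still has to be written out, though.

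Your alternative Cauchy argument has a gap as stated. The majorant $F^*_\alpha$ only gives domination, not the claimed a.e.\ convergence $(f_\varepsilon-f_{\varepsilon'})_P^*\to0$. That pointwise convergence needs the a.e.\ existence of vertical limits of $F$ (Theorem~\ref{pro:mainfaaqaqaqq5}).
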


\begin{proof}
\noindent
$(ii)\Rightarrow(i)$.
This implication follows directly from Proposition~\ref{pro:mainqaq2aop}.

\medskip

\noindent
$(i)\Rightarrow(ii)$.
Assume that $F\not\equiv 0$, since the result is trivial when $F\equiv 0$.
Let $z_{0}=x_{0}+iy_{0}\in\mathbb{C}_{+}$ and consider a sequence 
$(y_{n})_{n}$ such that $0<y_{n}<y_{0}$ and $y_{n}\to 0$. 
For $z\in\mathbb{C}_{+}$, set
\[
F_{n}(z)=F(z+i y_{n}).
\]
Let $z=x+iy\in\mathbb{C}_{+}$. By Proposition~\ref{pro:main6mq4}, we have
\[
|F_{n}(z)|
\leq 
\Phi^{-1}\!\left(\frac{2}{\pi (y+y_{n})}\right)
\|F\|_{H^{\Phi}(\mathbb{C}_{+})}^{lux}
\leq 
\Phi^{-1}\!\left(\frac{2}{\pi y_{n}}\right)
\|F\|_{H^{\Phi}(\mathbb{C}_{+})}^{lux}.
\]
Hence $F_{n}\in H^{\infty}(\mathbb{C}_{+})$. 
Therefore there exists a unique function $f_{n}\in L^{\infty}(\mathbb{R})$ such that
\begin{equation}\label{eq:ualp5son}
F_{n}(x+iy)=(P_{y}\ast f_{n})(x),
\qquad \forall\, x+iy\in\mathbb{C}_{+},
\end{equation}
and $\|F_{n}\|_{H^{\infty}}=\|f_{n}\|_{L^{\infty}}$, (see \cite[Theorem 11.6]{javadmas}).  Moreover,
\[
f_{n}(x)
=\lim_{y\to 0}(P_{y}\ast f_{n})(x)
=\lim_{y\to 0}F(x+i(y+y_{n}))
=F(x+i y_{n})
\]
for almost every $x\in\mathbb{R}$.

Let $\varphi\in\mathscr{S}(\mathbb{R})$. 
Since $\mathscr{S}(\mathbb{R})\subset L^{1}(\mathbb{R})$, we obtain
\[
|(\varphi\ast f_{n})(x)|
\le 
\int_{\mathbb{R}}|\varphi(x-y)f_{n}(y)|\,dy
\le 
\|f_{n}\|_{L^{\infty}}\|\varphi\|_{L^{1}}.
\]
Thus $(\varphi\ast f_{n})\in L^{\infty}(\mathbb{R})$, which shows that $f_{n}$ defines a bounded tempered distribution on $\mathbb{R}$.

Let $t\in\mathbb{R}$ and $\alpha>0$. 
For $z=x+iy\in\Gamma_{\alpha}(t)$, we have $z+i y_{n}\in\Gamma_{\alpha}(t)$ since
\[
|x-t|<\alpha y
\quad\Rightarrow\quad
|x-t|<\alpha(y+y_{n}).
\]
Consequently,
\[
(F_{n})_{\alpha}^{*}(t)
=\sup_{z\in\Gamma_{\alpha}(t)}|F_{n}(z)|
=\sup_{z\in\Gamma_{\alpha}(t)}|F(z+i y_{n})|
\le 
\sup_{\zeta\in\Gamma_{\alpha}(t)}|F(\zeta)|
=F_{\alpha}^{*}(t).
\]
It follows that
\begin{equation}\label{eq:ualp5soaqn}
(f_{n})_{P}^{*}(t)
=\sup_{y>0}|(P_{y}\ast f_{n})(t)|
=\sup_{y>0}|F_{n}(t+iy)|
\le 
(F_{n})_{\alpha}^{*}(t)
\le 
F_{\alpha}^{*}(t).
\end{equation}
Since $F\in H^{\Phi}(\mathbb{C}_{+})$, we have $F_{\alpha}^{*}\in L^{\Phi}(\mathbb{R})$ by Lemma~\ref{pro:main6aapaqmaqq4k}. 
Combining this with \eqref{eq:ualp5soaqn}, we deduce that $(f_{n})_{P}^{*}\in L^{\Phi}(\mathbb{R})$. 
Hence $f_{n}\in H^{\Phi}(\mathbb{R})$ and
\begin{equation}\label{eq:ualp5soaqn1}
\|f_{n}\|_{H^{\Phi}(\mathbb{R})}^{lux}
\lesssim
\|F\|_{H^{\Phi}(\mathbb{C}_{+})}^{lux}.
\end{equation}
Thus $(f_{n})_{n}$ is bounded in $H^{\Phi}(\mathbb{R})$. 
Consequently, there exist $f\in\mathscr{S}'(\mathbb{R})$ and a subsequence $(f_{n_{k}})_{k}$ such that $f_{n_{k}}\to f$ in $\mathscr{S}'(\mathbb{R})$, (see  \cite{yangLiangKy}).  In particular, for all $\varphi\in\mathscr{S}(\mathbb{R})$,
\[
\lim_{k\to\infty}\langle f_{n_{k}},\varphi\rangle
=
\langle f,\varphi\rangle.
\]
Let $x+iy\in\mathbb{C}_{+}$. 
Since $P_{y}(x-\cdot)\in L^{1}(\mathbb{R})$ and the Schwartz space $\mathscr{S}(\mathbb{R})$ is dense in $L^{1}(\mathbb{R})$, we deduce that
\begin{equation}\label{eq:ualpn1}
F(x+iy)
=\lim_{k\to\infty}(P_{y}\ast f_{n_{k}})(x)
=\lim_{k\to\infty}\langle P_{y}(x-\cdot),f_{n_{k}}\rangle
=\langle P_{y}(x-\cdot),f\rangle
=(P_{y}\ast f)(x).
\end{equation}
Moreover, by Fatou's lemma and \eqref{eq:ualp5soaqn1},
\begin{equation}\label{eq:ualp5sn1}
\|f\|_{H^{\Phi}(\mathbb{R})}^{lux}
\lesssim
\liminf_{k\to\infty}\|f_{n_{k}}\|_{H^{\Phi}(\mathbb{R})}^{lux}
\lesssim
\|F\|_{H^{\Phi}(\mathbb{C}_{+})}^{lux}.
\end{equation}
Since $f$ is a bounded distribution belonging to $H^{\Phi}(\mathbb{R})$, Proposition~\ref{pro:mainqaq2aop} yields
\begin{equation}\label{eq:ualp5snaq1}
\|F\|_{H^{\Phi}(\mathbb{C}_{+})}^{lux}
\lesssim
\sup_{y>0}\|(P_{y}\ast f)\|_{H^{\Phi}(\mathbb{R})}^{lux}
\lesssim
\|f\|_{H^{\Phi}(\mathbb{R})}^{lux},
\end{equation}
and
\[
\lim_{y\to 0}
\|F(\cdot+iy)-f\|_{H^{\Phi}(\mathbb{R})}^{lux}
=
\lim_{y\to0}
\|(P_{y}\ast f)-f\|_{H^{\Phi}(\mathbb{R})}^{lux}
=0.
\]

The proof is complete.
\end{proof}

\medskip

%\noindent
%This representation theorem establishes a precise correspondence between boundary Hardy--Orlicz spaces and analytic functions in the upper half-plane.

\begin{corollaire}\label{pro:main5aqaqplq5}
Let $\Phi \in \mathscr{L}$ and let $F \in H^{\Phi}(\mathbb{C}_{+})$. 
Then, for all $y>0$, the function $F(\cdot+iy)$ defines a bounded tempered distribution on $\mathbb{R}$ which belongs to $H^{\Phi}(\mathbb{R})$. Moreover, there exists a constant $C>0$, independent of $F$ and $y$, such that
\begin{equation}\label{eq:deaqa2}
\|F(\cdot+iy)\|_{H^{\Phi}(\mathbb{R})}^{lux}
\leq 
C\,\|F\|_{H^{\Phi}(\mathbb{C}_{+})}^{lux}.
\end{equation}
\end{corollaire}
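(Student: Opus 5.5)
The plan is to reduce everything to the boundary representation of Theorem~\ref{pro:main6mmq4} combined with the uniform bound of Proposition~\ref{pro:mainqaq2aop}. Fix $F\in H^{\Phi}(\mathbb{C}_{+})$ with $F\not\equiv 0$. By Theorem~\ref{pro:main6mmq4} there is a bounded tempered distribution $f\in H^{\Phi}(\mathbb{R})$ such that
\[
F(x+iy)=(P_{y}\ast f)(x)\ \text{for all } x+iy\in\mathbb{C}_{+},
\qquad
\|f\|_{H^{\Phi}(\mathbb{R})}^{lux}\lesssim \|F\|_{H^{\Phi}(\mathbb{C}_{+})}^{lux}.
\]
Thus, for each fixed $y>0$, the function $F(\cdot+iy)$ coincides with $P_{y}\ast f$.

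\emph{Bounded tempered distribution.} Since $f$ is a bounded tempered distribution, $P_y\ast f$ is a bounded smooth function, as recalled before Proposition~\ref{pro:mainamp}. Alternatively, Proposition~\ref{pro:main6mq4} gives directly
\[
|F(x+iy)|\le \Phi^{-1}\!\left(\tfrac{2}{\pi y}\right)\|F\|_{H^{\Phi}(\mathbb{C}_{+})}^{lux},
\]
so $F(\cdot+iy)\in L^\infty(\mathbb{R})$. Then, for every $\varphi\in\mathscr{S}(\mathbb{R})$, we have $|\varphi\ast F(\cdot+iy)|\le \|\varphi\|_{L^1}\|F(\cdot+iy)\|_{L^\infty}$. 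This is exactly the argument used for $f_n$ in the proof of Theorem~\ref{pro:main6mmq4}.

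\emph{Norm bound.} By \eqref{eq:pmta2} of Proposition~\ref{pro:mainqaq2aop}, applied with $t=y$, we get $P_y\ast f\in H^{\Phi}(\mathbb{R})$ and
\[
\|F(\cdot+iy)\|_{H^{\Phi}(\mathbb{R})}^{lux}=\|P_y\ast f\|_{H^{\Phi}(\mathbb{R})}^{lux}\le C\|f\|_{H^{\Phi}(\mathbb{R})}^{lux}\lesssim \|F\|_{H^{\Phi}(\mathbb{C}_{+})}^{lux}.
\]
The constants are independent of $y$ and $F$, since those in Proposition~\ref{pro:mainqaq2aop} and Theorem~\ref{pro:main6mmq4} are.

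\emph{Possible gap and fallback.} The only delicate point is that the identification $F(\cdot+iy)=P_y\ast f$ must hold pointwise, so that the $H^\Phi(\mathbb{R})$ norms agree. This is guaranteed by the equality for all $x+iy$ stated in Theorem~\ref{pro:main6mmq4}. If one prefers to avoid passing through $f$, there is a direct route. Using the semigroup identity $P_s\ast F(\cdot+iy)=F(\cdot+i(s+y))$, which follows from the boundedness of $F(\cdot+iy)$ and the Poisson representation of bounded harmonic functions, one gets the pointwise bound
\[
(F(\cdot+iy))_P^*(t)\le F_\alpha^*(t).
\]
Lemma~\ref{pro:main6aapaqmaqq4k} together with Proposition~\ref{pro:mainamp}(i) then yields the same uniform estimate.
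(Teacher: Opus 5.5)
Your proof is correct and follows the paper's argument essentially verbatim: you represent $F(\cdot+iy)=P_y\ast f$ via Theorem~\ref{pro:main6mmq4}, take the uniform norm bound from \eqref{eq:pmta2}, and get that $F(\cdot+iy)$ is a bounded distribution from the pointwise estimate of Proposition~\ref{pro:main6mq4} combined with $\|\varphi\|_{L^1}$. Your fallback route is also valid; it is exactly the estimate \eqref{eq:ualp5soaqn1}, which the paper already obtains for arbitrary $y_n>0$ inside the proof of Theorem~\ref{pro:main6mmq4}, so it yields the corollary without the compactness step in $\mathscr{S}'(\mathbb{R})$ used there to construct $f$.
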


\begin{proof}
Let $F\in H^{\Phi}(\mathbb{C}_{+})$. By Theorem \ref{pro:main6mmq4}, there exists a bounded tempered distribution $f$ on $\mathbb{R}$ such that $f\in H^{\Phi}(\mathbb{R})$ and
\[
F(x+iy)=(P_y*f)(x), \qquad \text{for all } x+iy\in\mathbb{C}_{+}.
\]
Moreover, $\|F\|_{H^{\Phi}(\mathbb{C}_{+})}^{lux}\approx 
\|f\|_{H^{\Phi}(\mathbb{R})}^{lux}$. Consequently, for every $y>0$,
\[
\|F(\cdot+iy)\|_{H^{\Phi}(\mathbb{R})}^{lux}
=
\|P_y*f\|_{H^{\Phi}(\mathbb{R})}^{lux}
\lesssim 
\|f\|_{H^{\Phi}(\mathbb{R})}^{lux}
\approx 
\|F\|_{H^{\Phi}(\mathbb{C}_{+})}^{lux},
\]
where the first inequality follows from Proposition \ref{pro:mainqaq2aop}.

Let $\varphi\in\mathscr{S}(\mathbb{R})$. Since $\mathscr{S}(\mathbb{R})\subset L^{1}(\mathbb{R})$, we have for every $x\in\mathbb{R}$,
\[
|(\varphi*F(\cdot+iy))(x)|
=
\left|
\int_{\mathbb{R}}\varphi(x-u)F(u+iy)\,du
\right|
\le 
\int_{\mathbb{R}}|\varphi(x-u)||F(u+iy)|\,du.
\]

By Proposition \ref{pro:main6mq4}, we obtain
\[
|(\varphi*F(\cdot+iy))(x)|
\le 
\Phi^{-1}\!\left(\frac{2}{\pi y}\right)
\|F\|_{H^{\Phi}(\mathbb{C}_{+})}^{lux}
\|\varphi\|_{L^{1}(\mathbb{R})}.
\]
Hence $(\varphi*F(\cdot+iy))\in L^{\infty}(\mathbb{R})$. This shows that $F(\cdot+iy)$ defines a bounded tempered distribution on $\mathbb{R}$.
\end{proof}

Let us prove the following lemma that will needed below.
\begin{lemme}\label{pro:main5aqq5}
Let  $\beta >0$ and  $\Phi \in \mathscr{L}$. For  $F\in H^{\Phi}(\mathbb{C_{+}})$, we have 
$$  \int_{\mathbb{R}}\frac{F(t+i\beta)}{t-\overline{z}}dt=0, ~~\forall~z \in \mathbb{C}_{+}.
  $$
\end{lemme}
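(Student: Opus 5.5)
The plan is to view $G(w):=F(w+i\beta)$ as a holomorphic function on a neighbourhood of the closed upper half-plane, and to note that for fixed $z\in\mathbb{C}_+$ the function $w\mapsto 1/(w-\overline{z})$ is holomorphic on $\{\operatorname{Im} w>-\operatorname{Im} z\}$, since its only pole $\overline{z}$ lies in the lower half-plane. The product $w\mapsto G(w)/(w-\overline{z})$ is therefore holomorphic on $\{\operatorname{Im} w>-\min(\beta,\operatorname{Im} z)\}$. We will integrate it over the boundary of the half-disc $\{|w|<R,\ \operatorname{Im} w>0\}$ and let $R\to\infty$. Cauchy's theorem gives
\[
\int_{-R}^{R}\frac{F(t+i\beta)}{t-\overline{z}}\,dt=-\int_{0}^{\pi}\frac{F(Re^{i\theta}+i\beta)}{Re^{i\theta}-\overline{z}}\,iRe^{i\theta}\,d\theta ,
\]
so the statement follows once two things are known: the arc integral tends to $0$, and the left side converges (as an improper, or better an absolute, integral) to $\int_{\mathbb{R}}$.

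The key input is decay of $F$ at infinity uniformly in the region $\{\operatorname{Im} w\ge \beta\}$. Proposition~\ref{pro:main6mq4} gives only boundedness there: $|F(w+i\beta)|\le \Phi^{-1}(2/(\pi\beta))\|F\|^{lux}_{H^\Phi(\mathbb{C}_+)}$. Boundedness is not enough, because the kernel $1/(t-\overline z)$ is not integrable. We therefore need $|F(w)|\to 0$ as $|w|\to\infty$ with $\operatorname{Im} w\ge\beta$. I would prove this with a local mean-value argument. Since $\Phi\in\mathscr{L}$, the function $\Phi_p(t)=\Phi(t^{1/p})$ is convex (Remark~\ref{pro:main 5aqaqq2pl}), and $|F|^p$ is subharmonic. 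Hence $\Phi(|F(w)|/\lambda)=\Phi_p(|F(w)|^p/\lambda^p)$ is bounded by the area average of $\Phi(|F|/\lambda)$ over the disc $D(w,\beta/2)$, by Jensen's inequality.

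This area average is at most $C_\beta\int_{\beta/2}^{3\beta/2}\int_{|x-\operatorname{Re} w|<\beta/2}\Phi(|F(x+iy)|/\lambda)\,dx\,dy$. Since $\sup_y\int_{\mathbb{R}}\Phi(|F(x+iy)|/\lambda)\,dx\le 1$ and the functions $x\mapsto \Phi(|F(x+iy)|/\lambda)$ are integrable, we want the tail piece to tend to $0$ as $|\operatorname{Re} w|\to\infty$. Here the obstacle is uniformity in $y\in[\beta/2,3\beta/2]$. I would handle it by using the nontangential maximal function: $\Phi(F^*_\alpha/\lambda)\in L^1$ by Lemma~\ref{pro:main6aapaqmaqq4k}, and $|F(x+iy)|\le F^*_\alpha(x')$ for all $|x'-x|<\alpha y$. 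This dominates the double integral by $C\int_{|x-\operatorname{Re}w|<\beta}\Phi(F^*_\alpha/\lambda)$, which tends to $0$ as $|\operatorname{Re}w|\to\infty$. For large $\operatorname{Im} w$ the bound $\Phi^{-1}(2/(\pi\operatorname{Im} w))\to 0$ from Proposition~\ref{pro:main6mq4} already gives decay. Together these show that $\varepsilon(R):=\sup\{|F(w)|:\operatorname{Im} w\ge\beta,\ |w|\ge R\}\to 0$.

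The arc term is then bounded by $\pi R\,\varepsilon(R)/(R-|z|)\to 0$. To make sense of the integral on $\mathbb{R}$ absolutely, I would use the same pointwise estimate more quantitatively. Take $\lambda=\|F\|^{lux}$ and the averaging disc of radius $\beta/2$. This gives $\Phi(|F(t+i\beta)|/\lambda)\lesssim \int_{|x-t|<\beta}\Phi(F_\alpha^*/\lambda)$, so $|F(t+i\beta)|$ is bounded by $\lambda\Phi^{-1}(\cdot)$ of an $L^1$ function that is $\lesssim h(t)$. The difficulty is that $\Phi^{-1}$ has upper type $1/p\ge1$, so $\Phi^{-1}(h)\lesssim h^{p}$-type bounds are not obviously integrable against $1/|t|$. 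If absolute convergence cannot be obtained this way, I will interpret the integral as $\lim_{R\to\infty}\int_{-R}^R$. This is consistent with Corollary~\ref{pro:main5aqpq5}, which pairs $H^\Phi(\mathbb{R})$ distributions with $1/(t-\overline z)$. Alternatively, I would approach the statement via that corollary: write the integral as $-2i\pi(P_y*F(\cdot+i\beta))(x)+2i\pi\mathcal{P}_S(F(\cdot+i\beta))(z)$ and check that both equal $F(z+i\beta)$. The first is the Poisson representation of $F(\cdot+i\beta)$, and the second is the Cauchy reproducing formula. That alternative reduces to the same decay issue, which is the main obstacle.
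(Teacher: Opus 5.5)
Your contour argument is sound, but you control the arc by a different and heavier route than the paper. You prove the qualitative fact $\varepsilon(R)\to 0$, i.e.\ uniform vanishing of $F$ at infinity in $\{\mathrm{Im}\,w\ge\beta\}$, using subharmonicity of $|F|^p$, Jensen's inequality for $\Phi_p$, and Lemma~\ref{pro:main6aapaqmaqq4k}. The paper uses only Proposition~\ref{pro:main6mq4} together with the fact that $u\mapsto \Phi^{-1}(u)/u$ is nondecreasing (Lemma~\ref{pro:maiaqaq1q8}(v)). This gives $\Phi^{-1}(u)\le C_{u_0}\,u$ for $0\le u\le u_0$, hence $|F(w)|\lesssim \min\{1,1/\mathrm{Im}\,w\}$ on $\{\mathrm{Im}\,w\ge \beta\}$. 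On an arc of radius $R$, where $\mathrm{Im}\,w\gtrsim R\theta$ near the ends, the arc integral is then $\lesssim \int_{\theta_R}^{\pi/2}\Phi^{-1}\bigl(1/(R\theta)\bigr)\,d\theta\lesssim \ln(R/\beta)/R\to 0$. So your claim that one \emph{needs} $|F(w)|\to 0$ near the real axis overstates the difficulty. $F$ is merely bounded only on an angular sector of size $O(1/R)$ at the ends of the arc, which is harmless. The paper's estimate is one line and quantitative. Yours yields the stronger structural statement $\varepsilon(R)\to0$, at the cost of the maximal-function machinery. Incidentally, your subharmonicity and Jensen step is redundant: since $w\in\Gamma_\alpha(x')$ whenever $|x'-\mathrm{Re}\,w|<\alpha\,\mathrm{Im}\,w$, you can bound $\Phi(|F(w)|/\lambda)$ directly by the average of $\Phi(F^*_\alpha/\lambda)$ over that interval.

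The point you leave open, absolute convergence of $\int_{\mathbb{R}}$, is not an obstacle. You should not fall back on $\lim_{R\to\infty}\int_{-R}^{R}$, because Theorem~\ref{pro:main6mmq4p} uses this lemma as a genuine integral to be compared with the pairing of Corollary~\ref{pro:main5aqpq5}.

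Your worry about the upper type $1/p$ of $\Phi^{-1}$ is misplaced. Upper type governs large arguments, whereas your $h$ is bounded. For bounded arguments the same monotonicity of $\Phi^{-1}(u)/u$ gives $\Phi^{-1}(u)\lesssim u$, hence $|F(t+i\beta)|\lesssim h(t)\in L^1(\mathbb{R})$.

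There is an even more direct argument. Take $\lambda=\|F\|^{lux}_{H^\Phi(\mathbb{C}_+)}$ and $M=\Phi^{-1}(2/(\pi\beta))$. Proposition~\ref{pro:main6mq4} gives $|F(t+i\beta)|/\lambda\le M$. Since $\Phi(u)/u$ is nonincreasing, $u\le \frac{M}{\Phi(M)}\Phi(u)$ for $0\le u\le M$. Therefore $\int_{\mathbb{R}}|F(t+i\beta)|\,dt\le \lambda\frac{M}{\Phi(M)}\int_{\mathbb{R}}\Phi\bigl(|F(t+i\beta)|/\lambda\bigr)\,dt\le \lambda\frac{M}{\Phi(M)}$. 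Thus $F(\cdot+i\beta)\in L^1(\mathbb{R})$, and the integral converges absolutely with modulus at most $\|F(\cdot+i\beta)\|_{L^1}/\mathrm{Im}\,z$. By dominated convergence it equals your symmetric limit, which is $0$. With this addition your proof is complete.
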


\begin{proof}
Since $F\in H^{\Phi}(\mathbb{C}_{+})$, Corollary~\ref{pro:main5aqaqplq5} ensures that 
$F(\cdot+i\beta)$ defines a bounded tempered distribution on $\mathbb{R}$ and belongs to $H^{\Phi}(\mathbb{R})$.
Consequently,
\begin{equation}\label{eq:uason}
\left|\int_{\mathbb{R}}\frac{F(t+i\beta)}{t-\overline{z}}\,dt\right|<\infty,
\end{equation}
thanks to Corollary \ref{pro:main5aqpq5}. 

\medskip

We now prove that the quantity in \eqref{eq:uason} is equal to $0$.

Let $(O,\overrightarrow{u},\overrightarrow{v})$ be the canonical orthonormal frame of the complex plane. 
For $R > \max\{1, \beta+|z|\}$, consider $\Gamma_{R,\beta}$ the curve defined by 
 $$ \Gamma_{R,\beta}:=\{ \omega \in \mathbb{C}_{+}:\hspace*{0.25cm} |\omega-i\beta| \leq R \hspace*{0.25cm}\text{and} \hspace*{0.25cm} \mathrm{Im}(\omega) \geq \beta       \}.       $$
Consider the  triangle $AOB$ rectangular at $O$ and contained in the frame  $(O,\overrightarrow{u},\overrightarrow{v})$ such that the distances $AO=\beta$ and $AB=R$, and put $\theta_{R}=mes(\widehat{OBA})$, the measure of the angle $OBA$. 

\medskip

By applying reasoning similar to that used in the proof of  \cite[Lemma 2.7]{djefeuto},  we obtain
$$  \left|\int_{-R\cos \theta_{R}}^{R\cos \theta_{R}}\frac{F(t+i\beta)}{t-\overline{z}}dt \right|
\lesssim \frac{R}{R- |i\beta+\overline{z}|}\|F\|_{H^{\Phi}(\mathbb{C}_{+})}^{lux} \int_{\theta_{R}}^{\pi/2} \Phi^{-1}\left(\frac{1}{ R\theta}\right)d\theta.  $$

\medskip

Suppose that $\Phi$ is of both lower type $p$ and upper type $q$ such that $0<p\leq q \leq 1$. 

\medskip

Since the function $t\mapsto \frac{\Phi^{-1}(t)}{t^{1/q}}$ is non-decreasing on $(0, \infty)$, we deduce that
$$  \int_{\theta_{R}}^{\pi/2} \Phi^{-1}\left(\frac{1}{ R\theta}\right)d\theta \lesssim  \beta\Phi^{-1}\left(\frac{1}{ \beta}\right)\int_{\beta/R}^{\pi/2} \left(\frac{1}{ R\theta}\right)^{1/q} d\theta.  $$
We have
\[ \int_{\beta/R}^{\pi/2} \theta^{-\frac{1}{q}} d\theta=
\left\{
\begin{array}{ll} \ln\left( \frac{\pi R}{2 \beta} \right) & \mbox{ if $q=1$ }\\\\ \frac{q}{1-q}[ (2/\pi)^{\frac{1}{q}-1}  - (R/\beta)^{\frac{1}{q}-1}] & \mbox{ if $q<1$}
\end{array}
\right. \]
We deduce that
\[ \left|\int_{-R\cos \theta_{R}}^{R\cos \theta_{R}}\frac{F(t+i\beta)}{t-\overline{z}}dt \right|\lesssim
\left\{
\begin{array}{ll}\beta\Phi^{-1}\left(\frac{1}{ \beta}\right) \|F\|_{H^{\Phi}}^{lux} \times\frac{R}{R- |i\beta+\overline{z}|} \times\frac{\ln\left( \frac{\pi R}{2 \beta}\right)}{R} & \mbox{ if $q=1$ }\\\\ \beta\Phi^{-1}\left(\frac{1}{ \beta}\right) \|F\|_{H^{\Phi}}^{lux} \times\frac{R}{R- |i\beta+\overline{z}|} \times \left[ \frac{(2/\pi)^{\frac{1}{q}-1}}{R^{1/q}}  - \frac{(1/\beta)^{\frac{1}{q}-1}}{R}\right]& \mbox{ if $q<1$}
\end{array}
\right. \]
It follows that
$$ \left|\int_{-R\cos \theta_{R}}^{R\cos \theta_{R}}\frac{F(t+i\beta)}{t-\overline{z}}dt \right|  \longrightarrow 0, 
        $$
when $R\longrightarrow \infty$.    The proof is complete.     
\end{proof}

The following result gives each element in the holomorphic Hardy-Orlicz space as the Szeg\"o projection of a function in the Hardy-Orlicz space of the real line.
\begin{theorem}\label{pro:main6mmq4p}
Let $\Phi \in \mathscr{L}$ and let $F$ be an analytic function on $\mathbb{C}_{+}$. The following assertions are equivalent:
\begin{itemize}
\item[(i)] $F \in H^{\Phi}(\mathbb{C}_{+})$;
\item[(ii)] There exists a bounded tempered distribution $f$ on $\mathbb{R}$ such that $f \in H^{\Phi}(\mathbb{R})$ and
\[
F(z)= (P_{y} \ast f)(x)= \mathcal{P}_{S}(f)(z), \qquad \forall\, z=x+iy \in \mathbb{C}_{+}.
\]
\end{itemize}
\end{theorem}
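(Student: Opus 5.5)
The implication $(ii)\Rightarrow(i)$ needs nothing new. If $F(z)=(P_y\ast f)(x)$ with $f$ a bounded tempered distribution in $H^{\Phi}(\mathbb{R})$, then Theorem~\ref{pro:main6mmq4} (or Theorem~\ref{pro:main5aq5}(ii) applied to $\mathcal{P}_S(f)$) gives $F\in H^{\Phi}(\mathbb{C}_{+})$. So the whole content lies in $(i)\Rightarrow(ii)$. By Theorem~\ref{pro:main6mmq4} we already have a bounded tempered distribution $f\in H^{\Phi}(\mathbb{R})$ with $F(x+iy)=(P_y\ast f)(x)$. It remains to show that $F=\mathcal{P}_S(f)$, i.e.\ that the Poisson and Cauchy (Szeg\"o) integrals of $f$ agree.

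I would first carry this out at the level of the translates $F_\beta:=F(\cdot+i\beta)$, $\beta>0$. Fix $z=x+iy\in\mathbb{C}_+$ and use the identity from the proof of Corollary~\ref{pro:main5aqpq5}:
\[
\frac{y}{(t-x)^2+y^2}=\frac{i}{2}\left(\frac{1}{t-\overline z}-\frac{1}{t-z}\right).
\]
Multiplying by $F(t+i\beta)$ and integrating gives
\[
\pi\,(P_y\ast F_\beta)(x)=\frac{i}{2}\int_{\mathbb{R}}\frac{F(t+i\beta)}{t-\overline z}\,dt-\frac{i}{2}\int_{\mathbb{R}}\frac{F(t+i\beta)}{t-z}\,dt .
\]
Lemma~\ref{pro:main5aqq5} kills the first integral. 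Since $F_\beta$ is bounded (Proposition~\ref{pro:main6mq4}) and harmonic/analytic up to the boundary, $P_y\ast F_\beta(x)=F(z+i\beta)$. Both integrals are finite by Corollary~\ref{pro:main5aqpq5}, since $F_\beta$ is a bounded tempered distribution in $H^\Phi(\mathbb{R})$ by Corollary~\ref{pro:main5aqaqplq5}. Hence
\[
F(z+i\beta)=\mathcal{P}_S(F_\beta)(z)=(P_y\ast F_\beta)(x)\qquad\text{for every }\beta>0 .
\]

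Next I would let $\beta\to0$. The left side tends to $F(z)$ by continuity. On the right, Theorem~\ref{pro:main6mmq4} gives $\|F_\beta-f\|_{H^\Phi(\mathbb{R})}^{lux}\to0$. The map $g\mapsto \mathcal{P}_S(g)(z)$ is continuous on bounded tempered distributions in $H^\Phi(\mathbb{R})$: indeed $|\mathcal{P}_S(g)(z)|\le\Phi^{-1}(2/\pi y)\,\|\mathcal{P}_S g\|_{H^\Phi(\mathbb{C}_+)}^{lux}\lesssim \|g\|_{H^\Phi(\mathbb{R})}^{lux}$ by Proposition~\ref{pro:main6mq4} and Theorem~\ref{pro:main5aq5}(ii). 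The estimate (\ref{eq:uasoqaqan}) gives the same for $g\mapsto(P_y\ast g)(x)$. Applying these to $g=F_\beta-f$, and using the quasi-triangle inequality in $H^\Phi$ (or the $\Phi$-modular directly), I get $\mathcal{P}_S(F_\beta)(z)\to\mathcal{P}_S(f)(z)$. Therefore $F(z)=\mathcal{P}_S(f)(z)=(P_y\ast f)(x)$.

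The main obstacle is making sense of $\mathcal{P}_S(f)$ for a distribution $f$, since the Cauchy kernel $1/(t-z)$ is not a Schwartz function. I would interpret $\mathcal{P}_S(f)(z)$ through the atomic decomposition, namely as $\sum_j\mathcal{P}_S(a_j)(z)$, where the series converges by Lemma~\ref{pro:main2aaqop} and the moment conditions. Equivalently, I would define $\mathcal{P}_S$ as the unique continuous extension from the dense class of $L^2\cap H^\Phi$ functions, which is what Theorem~\ref{pro:main5aq5}(ii) implicitly provides. With either definition, the continuity used in the limit step is exactly the bound of Theorem~\ref{pro:main5aq5}(ii) combined with Proposition~\ref{pro:main6mq4}. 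One must also check that this extension agrees with the honest integral for $F_\beta$, which is a genuine function. For this I would approximate $F_\beta$ by atomic partial sums that converge in $H^\Phi(\mathbb{R})$ and also in $L^2_{loc}$ against the kernel.
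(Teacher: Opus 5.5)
Your proposal is correct and is essentially the paper's argument: both use Lemma~\ref{pro:main5aqq5} to kill $\int_{\mathbb{R}} F(t+i\beta)(t-\overline{z})^{-1}\,dt$, and then let $\beta\to0$ using $\|F(\cdot+i\beta)-f\|_{H^{\Phi}(\mathbb{R})}^{lux}\to0$ together with the continuity estimates behind Corollary~\ref{pro:main5aqpq5} (Proposition~\ref{pro:main6mq4}, Theorem~\ref{pro:main5aq5}(ii) and \eqref{eq:uasoqaqan}). The only difference is the order of the two steps (the paper passes to the limit in the conjugate Cauchy integral and applies the Poisson--Cauchy identity to $f$, whereas you apply the identity to the translates and pass to the limit in $\mathcal{P}_S(\cdot)(z)$); your remark that $\mathcal{P}_S(f)$ must be defined for distributions (e.g.\ atomically) and shown to agree with the honest Cauchy integral of $F(\cdot+i\beta)$ makes explicit a point the paper leaves implicit.
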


\begin{proof}
\noindent
$(ii)\Rightarrow(i)$. 
This implication follows directly from the boundedness properties of the Poisson operator and Szeg\"o projection  on $H^{\Phi}(\mathbb{R})$ (see Proposition \ref{pro:mainqaq2aop} and  Theorem~\ref{pro:main5aq5}).

\medskip

\noindent
$(i)\Rightarrow(ii)$. 
Assume that $F \in H^{\Phi}(\mathbb{C}_{+})$. By Theorem~\ref{pro:main6mmq4}, there exists a bounded tempered distribution $f$ on $\mathbb{R}$ such that $f \in H^{\Phi}(\mathbb{R})$ and
\[
F(x+iy)=(P_{y}\ast f)(x), \qquad \forall\, x+iy \in \mathbb{C}_{+},
\]
with
\[
\lim_{y\to 0}\|F(\cdot+iy)-f\|_{H^{\Phi}(\mathbb{R})}^{\mathrm{lux}}=0.
\]
Let $z \in \mathbb{C}_{+}$ and $\beta>0$. By Corollary~\ref{pro:main5aqpq5}, we have
\[
\left|\int_{\mathbb{R}}\frac{f(t)}{t-\overline{z}}\,dt
-
\int_{\mathbb{R}}\frac{F(t+i\beta)}{t-\overline{z}}\,dt\right|
\lesssim \|F(\cdot+i\beta)-f\|_{H^{\Phi}(\mathbb{R})}^{\mathrm{lux}}.
\]
Letting $\beta \to 0$ and using the above convergence, we obtain
\[
\int_{\mathbb{R}}\frac{f(t)}{t-\overline{z}}\,dt
=
\lim_{\beta \to 0}\int_{\mathbb{R}}\frac{F(t+i\beta)}{t-\overline{z}}\,dt.
\]
By Lemma~\ref{pro:main5aqq5}, the right-hand side is equal to $0$, hence
\[
\int_{\mathbb{R}}\frac{f(t)}{t-\overline{z}}\,dt = 0.
\]
Therefore, 
\[
F(z)= (P_{y} \ast f)(x)= \mathcal{P}_{S}(f)(z), \qquad \forall\, z \in \mathbb{C}_{+}.
\]
This completes the proof.
\end{proof}

We then obtain the following properties of the Szeg\"o projection.
\begin{theorem}\label{pro:main2aaqaqop}
Let $\Phi \in \mathscr{L}$. The Szeg\"o projection 
$\mathcal{P}_{S}: H^{\Phi}(\mathbb{R}) \longrightarrow H^{\Phi}(\mathbb{C}_{+})$ is a linear, continuous, and surjective operator.
\end{theorem}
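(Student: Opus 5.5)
The plan is to assemble the statement from three earlier results: Theorem~\ref{pro:main5aq5}(ii), Theorem~\ref{pro:main6mmq4p}, and Theorem~\ref{pro:main6mmq4}. Linearity, continuity and surjectivity will be checked in that order.

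\emph{Linearity.} Fix $f,g\in H^{\Phi}(\mathbb{R})$ and scalars $\alpha,\beta$. The map $f\mapsto \mathcal{P}_S(f)(z)$ is the pairing of $f$ with the function $t\mapsto \frac{1}{2\pi i}\frac{1}{t-z}$. Corollary~\ref{pro:main5aqpq5} and the identity used in its proof show that this pairing is well defined on $H^{\Phi}(\mathbb{R})$. Alternatively, one can use the atomic series from Proposition~\ref{pro:mainamp}(iii), where it is defined as the limit of $\sum_j\mathcal{P}_S(a_j)(z)$. Either definition is linear in $f$, so $\mathcal{P}_S(\alpha f+\beta g)=\alpha\mathcal{P}_S(f)+\beta\mathcal{P}_S(g)$ pointwise on $\mathbb{C}_+$.

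\emph{Continuity.} For $f,g\in H^{\Phi}(\mathbb{R})$, linearity and Theorem~\ref{pro:main5aq5}(ii) give
\[
\|\mathcal{P}_S(f)-\mathcal{P}_S(g)\|_{H^{\Phi}(\mathbb{C}_{+})}^{\mathrm{lux}}=\|\mathcal{P}_S(f-g)\|_{H^{\Phi}(\mathbb{C}_{+})}^{\mathrm{lux}}\le C_2\|f-g\|_{H^{\Phi}(\mathbb{R})}^{\mathrm{lux}}.
\]
The spaces are only quasi-normed, so we cannot pass from boundedness to continuity by the usual normed-space argument. We do not need to: this Lipschitz estimate already gives continuity directly.

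\emph{Surjectivity.} Let $F\in H^{\Phi}(\mathbb{C}_+)$. Theorem~\ref{pro:main6mmq4p} supplies a bounded tempered distribution $f\in H^{\Phi}(\mathbb{R})$ with $F=\mathcal{P}_S(f)$. By Theorem~\ref{pro:main6mmq4}, $\|f\|_{H^{\Phi}(\mathbb{R})}^{\mathrm{lux}}\lesssim\|F\|_{H^{\Phi}(\mathbb{C}_+)}^{\mathrm{lux}}$, so the preimage can moreover be chosen with controlled norm. This proves surjectivity, and it also gives the quasi-norm equivalence $\|F\|\approx\inf\{\|f\|:\mathcal{P}_S f=F\}$.

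The main obstacle is making sure $\mathcal{P}_S(f)$ is meaningful for a general distribution $f\in H^{\Phi}(\mathbb{R})$, not just for atoms, and that the atomic-series definition agrees with the integral formula. The plan is to note that $\mathcal{P}_S(f)(z)=(P_y*f)(x)-\frac{1}{2\pi i}\int\frac{f(t)}{t-\bar z}dt$, an identity obtained from the kernel decomposition in the proof of Corollary~\ref{pro:main5aqpq5}. Both terms on the right are continuous linear functionals of $f$ in $H^{\Phi}(\mathbb{R})$ for fixed $z$, by Propositions~\ref{pro:maaqqaq5} and~\ref{pro:mainqaq2aop} together with Corollary~\ref{pro:main5aqpq5}. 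The atomic partial sums converge to $f$ in $H^{\Phi}(\mathbb{R})$, so they converge under both functionals, and the two definitions coincide.
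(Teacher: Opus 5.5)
Your proposal is correct and matches the paper's argument: continuity comes from the bound in Theorem~\ref{pro:main5aq5}(ii), surjectivity comes from Theorem~\ref{pro:main6mmq4p}, and the norm-controlled preimage you obtain via Theorem~\ref{pro:main6mmq4} is a harmless extra. Your well-definedness remark has two problems, though the main argument does not use it: the kernel identity actually gives $\mathcal{P}_S(f)(z)=(P_y*f)(x)+\frac{1}{2\pi i}\int_{\mathbb{R}}\frac{f(t)}{t-\overline{z}}\,dt$ (a plus, not a minus), and citing Corollary~\ref{pro:main5aqpq5} there is circular, because that corollary is itself derived from the boundedness of $\mathcal{P}_S$ in Theorem~\ref{pro:main5aq5}.
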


\begin{proof}
The result follows directly from Theorems~\ref{pro:main5aq5} and~\ref{pro:main6mmq4p}.
\end{proof}

Let $(z_{n})_{n\geq 1}$ be a sequence of elements of 
 $\mathbb{C}_{+}$ such that  $ \sum_{n=1}^{\infty}\frac{\mathrm{Im}( z_{n})}{|i+z_{n}|^{2}}< \infty$. The Blaschke product on $\mathbb{C}_{+}$ associated to $(z_{n})_{n\geq 1}$ is the function $B$ defined by
\begin{equation}\label{eq:phipaqm6e}
B(z)=\prod_{n=1}^{\infty}\dfrac{|\frac{i-z_{n}}{i-\overline{z_{n}}}|}{\frac{i -z_{n}}{i-\overline{z_{n}}}}\frac{z-z_{n}}{z-\overline{z_{n}}}, ~~\forall~
z \in \mathbb{C}_{+}.
\end{equation}

\medskip

Let $\sigma$ be a positive and singular Borel measure on $\mathbb{R}$ such that
$  \int_{\mathbb{R}}\frac{d\sigma(t)}{1+t^{2}}< \infty$
 and consider $S_{\sigma}$ the function defined by
$$ S_{\sigma}(z)=\exp\left\{ \frac{i}{\pi}\int_{\mathbb{R}}\left(\frac{1}{z-t}+\frac{t}{1+t^{2}}\right)d\sigma(t)  \right\}, ~~\forall~z \in \mathbb{C}_{+}.
   $$

Let $\alpha \geq 0$ be a real and put
$$  E_{\alpha}(z)=\exp\left\{i\alpha z  \right\}, ~~\forall~z \in \mathbb{C}_{+}.
  $$

Let $F$ be an analytic function on $\mathbb{C_{+}}$. We say that $F$ is an inner function on $\mathbb{C_{+}}$ if $F\in H^{\infty}(\mathbb{C_{+}})$
and  $ \left|\lim_{y\to 0}F(t+iy)\right|=1,$
for almost all $t\in \mathbb{R}$.

\begin{remark}\label{pro:maiaqq5}
The Blaschke product $B$ and the functions  $S_{\sigma}$ and $E_{\alpha}$ are inner functions on $\mathbb{C_{+}}$.
\end{remark}

Let $f$ be a positive measurable function on   $\mathbb{R}$ such that  $\log f \in L^{1}\left(\mathbb{R}, \frac{dt}{1+t^{2}}\right)$. The outer function associated with $f$ is  defined by
$$  O_{f}(z)= \exp\left\{ \frac{i}{\pi}\int_{\mathbb{R}}\left(\frac{1}{z-t}+\frac{t}{1+t^{2}}\right)\log|f(t)|dt  \right\}  , ~~\forall~z \in \mathbb{C}_{+}. 
  $$
Let us recall the following result.
\begin{theorem}[Theorem 3.24, \cite{djesehb1}]\label{pro:mainfaaqaqaqq5}
Let $\Phi \in \mathscr{L} \cup \mathscr{U}$ and let $0 \not\equiv F \in H^{\Phi}(\mathbb{C}_{+})$. Then there exists a unique function 
$f \in L^{\Phi}(\mathbb{R})$ such that $\log |f| \in L^{1}\!\left(\mathbb{R}, \frac{dt}{1+t^{2}}\right)$,
\[
f(x)=\lim_{y\to 0}F(x+iy), \quad \text{for a.e. } x \in \mathbb{R},
\]
and $f(t)\neq 0$ for almost every $t \in \mathbb{R}$. Moreover, for every $z=x+iy \in \mathbb{C}_{+}$, we have
\[
\log|F(x+iy)| \leq \frac{1}{\pi}\int_{\mathbb{R}}\frac{y}{(x-t)^{2}+y^{2}}\log|f(t)|\,dt.
\]
Furthermore,
\[
\|F\|_{H^{\Phi}(\mathbb{C}_{+})}^{\mathrm{lux}}
= \lim_{y\to 0}\|F(\cdot+iy)\|_{L^{\Phi}(\mathbb{R})}^{\mathrm{lux}}
= \|f\|_{L^{\Phi}(\mathbb{R})}^{\mathrm{lux}}.
\]
\end{theorem}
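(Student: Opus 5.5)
The plan is to deduce the boundary behaviour from the classical theory of $H^\infty$ and Nevanlinna functions, using vertical translates of $F$. The Orlicz condition is only used through the power bounds it gives near infinity. First, since $\Phi$ is of lower type $p$, taking $s=u$, $t=1/u$ in \eqref{eq:sui8n} gives $\Phi(u)\geq C^{-1}\Phi(1)u^{p}$ for $u\geq 1$. Hence $\log^{+}u\lesssim 1+\Phi(u)$ for all $u\geq0$.

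\textbf{Step 1: existence of the limit $f$.} By Lemma \ref{pro:main6aapaqmaqq4k}, $F_{\alpha}^{*}\in L^{\Phi}(\mathbb{R})$, and therefore
\[
\int_{\mathbb{R}}\log^{+}\!\big(F_{\alpha}^{*}(t)/\lambda\big)\,\frac{dt}{1+t^{2}}<\infty .
\]
Consequently $\sup_{y>0}\int_{\mathbb{R}}\log^{+}|F(t+iy)|\,\frac{dt}{1+t^{2}}<\infty$. Transported to the disc by the Cayley map, this places $F$ in the Nevanlinna class. So $F$ has nontangential limits $f(x)$ a.e., and $\log|f|\in L^{1}(dt/(1+t^{2}))$ since $F\not\equiv 0$; in particular $f\neq0$ a.e. Uniqueness of $f$ is immediate, as it is the a.e. limit. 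Finally $|f|\leq F_{\alpha}^{*}$ a.e., so $f\in L^{\Phi}(\mathbb{R})$.

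\textbf{Step 2: the log inequality.} Fix $\delta>0$ and set $F_{\delta}(z)=F(z+i\delta)$. By Proposition \ref{pro:main6mq4}, $F_{\delta}\in H^{\infty}(\mathbb{C}_{+})$, and its boundary function is the continuous function $F(\cdot+i\delta)$. The classical inner--outer factorization in $H^{\infty}$ gives
\[
\log|F(x+i(y+\delta))|\leq \big(P_{y}\ast\log|F(\cdot+i\delta)|\big)(x).
\]
We then let $\delta\to0$. For the positive part, $\log^{+}|F(t+i\delta)|\leq\log^{+}F_{\alpha}^{*}(t)$, which is $P_{y}(x-t)\,dt$-integrable by Step 1, so dominated convergence applies. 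For the negative part, Fatou's lemma yields
\[
\limsup_{\delta\to0}\int P_{y}(x-t)\big(-\log^{-}|F(t+i\delta)|\big)\,dt\leq\int P_{y}(x-t)\big(-\log^{-}|f(t)|\big)\,dt.
\]
Combining the two parts gives the stated inequality, and it shows once more that $\log|f|>-\infty$ a.e.

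\textbf{Step 3: the norm identities.} Write $\Phi(u)=\Phi_{p}(u^{p})$, where $\Phi_{p}$ is convex (Remark \ref{pro:main 5aqaqq2pl}). Multiplying the log inequality by $p$, exponentiating and applying Jensen's inequality for the probability measure $P_{y}(x-t)\,dt$ gives $|F(x+iy)|^{p}\leq P_{y}\ast|f|^{p}(x)$. A second use of Jensen, now for the convex function $\Phi_{p}$, gives
\[
\Phi\!\left(\frac{|F(x+iy)|}{\lambda}\right)\leq P_{y}\ast\Phi\!\left(\frac{|f|}{\lambda}\right)(x).
\]
Integrating in $x$ yields $\|F(\cdot+iy)\|^{lux}_{L^{\Phi}}\leq\|f\|^{lux}_{L^{\Phi}}$ for every $y>0$. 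The same argument applied to $F_{\delta}$ shows that $y\mapsto\|F(\cdot+iy)\|^{lux}_{L^{\Phi}}$ is non-increasing. Hence the limit as $y\to0$ exists and equals the supremum $\|F\|^{lux}_{H^{\Phi}}$. Conversely, Fatou's lemma on $\Phi(|F(\cdot+iy)|/\lambda)\to\Phi(|f|/\lambda)$ a.e. gives $\|f\|^{lux}_{L^{\Phi}}\leq\liminf_{y\to0}\|F(\cdot+iy)\|^{lux}_{L^{\Phi}}$. Together these give the three-way equality.

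The main obstacle is Step 1 together with the passage $\delta\to0$ in Step 2. One must be careful that the Nevanlinna membership is taken with respect to the correct weight $dt/(1+t^{2})$, which is where the domination by $F_{\alpha}^{*}$ is essential. One must also check that Jensen's inequality needs $\Phi_{p}$ convex, which covers both $\mathscr{L}$ and $\mathscr{U}$ via Lemma \ref{pro:maiaqaq1q8}(ii).
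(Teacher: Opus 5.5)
The paper gives no proof of this statement (it is quoted from \cite{djesehb1}), so I can only judge your argument on its own terms. Steps 2 and 3 are correct, but Step 1 contains an unjustified inference. The sentence ``transported to the disc by the Cayley map, this places $F$ in the Nevanlinna class'' does not follow from the bound $\sup_{y>0}\int_{\mathbb{R}}\log^{+}|F(t+iy)|\,\frac{dt}{1+t^{2}}<\infty$. The Nevanlinna condition on $\mathbb{D}$ concerns the circles $|w|=r$. Under $w=\frac{z-i}{z+i}$ their preimages are the circles $|z-i|=r|z+i|$, which rise to height $\frac{1+r}{1-r}$. Horizontal lines go instead to horocycles tangent to $\mathbb{T}$ at $w=1$. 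So your bound is not a change of variables away from the disc condition, and the two conditions genuinely differ: $e^{-iz}=1/E_{1}$ is in the Nevanlinna class, yet $\int_{\mathbb{R}}\log^{+}|e^{-i(t+iy)}|\,\frac{dt}{1+t^{2}}=\pi y$ is unbounded. The implication you need is true, but it requires either a Phragm\'en--Lindel\"of type argument or an explicit estimate on those circles (cone domination near the boundary, plus Proposition~\ref{pro:main6mq4} at large height). Neither is supplied.

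The simplest repair is to run the first half of your Step 2 before Step 1, since it uses only $F_{\delta}\in H^{\infty}(\mathbb{C}_{+})$ and not the existence of $f$. Because $t+i\delta\in\Gamma_{\alpha}(t)$, it gives
\[
\log^{+}|F(x+i(y+\delta))|\leq \big(P_{y}\ast\log^{+}|F(\cdot+i\delta)|\big)(x)\leq \big(P_{y}\ast\log^{+}F_{\alpha}^{*}\big)(x).
\]
Letting $\delta\to0$ shows that $\log^{+}|F|$ has the harmonic majorant $P_{y}\ast\log^{+}F_{\alpha}^{*}$. This is finite because you correctly showed $\log^{+}F_{\alpha}^{*}\in L^{1}\big(\frac{dt}{1+t^{2}}\big)$. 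Having a harmonic majorant is conformally invariant, so $F$ composed with the inverse Cayley map lies in $N(\mathbb{D})$. This gives a.e.\ nontangential limits and $\log|f|\in L^{1}\big(\frac{dt}{1+t^{2}}\big)$, and the rest of your argument then goes through unchanged.

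A smaller point: the exact identities need $\Phi_{p}$ itself to be convex, i.e.\ the normalization adopted in Section 2. This includes the monotonicity of $y\mapsto\|F(\cdot+iy)\|_{L^{\Phi}}^{\mathrm{lux}}$, from which you get existence of the limit. Lemma~\ref{pro:maiaqaq1q8}(ii) alone gives convexity only up to equivalence, and hence these identities only up to constants.
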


\medskip

\begin{theorem}\label{pro:main0apaaql0}
Let $\Phi \in \mathscr{L} \cup \mathscr{U}$ and let $0 \not\equiv F \in H^{\Phi}(\mathbb{C}_{+})$. Then there exists a unique function
$f \in L^{\Phi}(\mathbb{R})$ such that $\log |f| \in L^{1}\!\left(\mathbb{R}, \frac{dt}{1+t^{2}}\right)$
and
$$f(x)=\lim_{y\to 0}F(x+iy), $$
for almost every $x \in \mathbb{R}$. Moreover, there exists a unique positive singular Borel measure $\sigma$ on $\mathbb{R}$ such that
\[
\int_{\mathbb{R}}\frac{d\sigma(t)}{1+t^{2}}<\infty.
\]
Furthermore, $F$ admits a unique factorization of the form
\begin{equation}\label{eq:phiaq6ql3de}
F(z) = E_{\alpha}(z)\, B(z)\, S_{\sigma}(z)\, O_{|f|}(z), 
\qquad \forall\, z \in \mathbb{C}_{+},
\end{equation}
and
\[
\|F\|_{H^{\Phi}(\mathbb{C}_{+})}^{\mathrm{lux}}
= \|O_{|f|}\|_{H^{\Phi}(\mathbb{C}_{+})}^{\mathrm{lux}}
= \|f\|_{L^{\Phi}(\mathbb{R})}^{\mathrm{lux}}.
\]
\end{theorem}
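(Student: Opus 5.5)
The plan is to reduce the statement to the classical inner--outer factorization of $H^{p}(\mathbb{C}_{+})$, working with a convenient power of $F$. Let $0\not\equiv F\in H^{\Phi}(\mathbb{C}_{+})$. By Theorem~\ref{pro:mainfaaqaqaqq5}, $F$ has a nontangential boundary function $f\in L^{\Phi}(\mathbb{R})$, with $\log|f|\in L^{1}\!\left(\mathbb{R},\frac{dt}{1+t^{2}}\right)$, $f\neq0$ a.e., and $\|F\|_{H^{\Phi}}^{lux}=\|f\|_{L^{\Phi}}^{lux}$. This already gives the existence and uniqueness of $f$.

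\textbf{Step 1: land in a classical Hardy space.} Since $\Phi$ is of lower type $p$, I pick $0<r<p$. By Lemma~\ref{pro:maiaqaq1q8}(ii), the function $\Phi_{r}(t)=\Phi(t^{1/r})$ is equivalent to a convex growth function with $\Phi_r(t)/t\to\infty$. Jensen's inequality applied to $\Phi_r$ on windows, combined with Lemma~\ref{pro:main6aapaqmaqq4k} ($F_\alpha^*\in L^\Phi$) and the pointwise bound of Proposition~\ref{pro:main6mq4}, should show that $|F|^{r}$ admits a harmonic majorant on $\mathbb{C}_{+}$ of the form $P_y\ast\mu$. Alternatively, I would run the argument of \cite[Theorem 3.24]{djesehb1}, which underlies Theorem~\ref{pro:mainfaaqaqaqq5}, to conclude that $F$ is in the Nevanlinna--Smirnov class $N^{+}(\mathbb{C}_{+})$. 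This class is what the classical factorization needs.

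\textbf{Step 2: apply the Smirnov factorization.} From $F\in N^{+}(\mathbb{C}_{+})$, the classical theorem for the upper half-plane (Garnett, Ch.~II) gives a unique decomposition
$$F=c\,E_{\alpha}\,B\,S_{\sigma}\,O_{|f|},\qquad |c|=1,$$
where the pieces are determined as follows:
\begin{itemize}
\item $B$ is the Blaschke product over the zeros of $F$. The Blaschke condition $\sum_n \mathrm{Im}(z_n)/|i+z_n|^{2}<\infty$ follows from the subharmonic inequality in Theorem~\ref{pro:mainfaaqaqaqq5} by Jensen's formula, transferred through the Cayley map.
\item $\alpha\ge0$ and the singular measure $\sigma$ come from the canonical decomposition of the least harmonic majorant of $\log|F/B|$. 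Then $\log|F/B| - \log|O_{|f|}|$ is the Poisson integral of $-\sigma$ minus $\alpha y$.
\end{itemize}
Uniqueness follows because each factor is determined by $|F|$: the zeros fix $B$, the boundary modulus fixes $O_{|f|}$, and the singular part fixes $\alpha$ and $\sigma$. The unimodular constant $c$ is absorbed into the normalization of $B$, or into $O$.

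\textbf{Step 3: the norm identities.} The product $E_\alpha B S_\sigma$ is inner (Remark~\ref{pro:maiaqq5}), so $|O_{|f|}|\ge|F|$ on $\mathbb{C}_+$ and both functions have boundary modulus $|f|$. To show $O_{|f|}\in H^{\Phi}$, I use convexity of $\Phi_r$ together with Jensen's inequality under the Poisson measure. Since $\log|O_{|f|}(z)|=P_y\ast\log|f|$, Jensen applied to $t\mapsto\Phi(e^{t})$ gives
$$\Phi\!\left(\frac{|O_{|f|}(z)|}{\lambda}\right)\le P_y\ast\Phi\!\left(\frac{|f|}{\lambda}\right),$$
which requires $\Phi(e^t)$ convex; this holds since $\Phi_r(e^{rt})$ is convex. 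Integrating in $x$ yields $\|O_{|f|}\|_{H^{\Phi}}^{lux}\le\|f\|_{L^{\Phi}}^{lux}$. The reverse inequality, and the equality for $F$ itself, come from Theorem~\ref{pro:mainfaaqaqaqq5} applied to both functions, which share the boundary modulus $|f|$.

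\textbf{Main obstacle.} I expect Step 1 to be the hardest part, namely proving rigorously that $F\in N^{+}$ rather than merely that $\log|F|$ has a harmonic majorant. The role of this step is to exclude a positive singular part in the denominator of $F$. I would obtain it by showing that $\{\log^{+}|F(\cdot+iy)|\}_{y>0}$ is uniformly integrable with respect to $dt/(1+t^2)$, using $\log^+ t\lesssim \Phi(t)^{\epsilon}$ together with $\sup_y\int\Phi(|F(\cdot+iy)|)<\infty$.
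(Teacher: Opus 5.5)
Your argument is correct apart from one claim, discussed below. Your Step~2 is the classical canonical-factorization argument, which is all the paper gives: it defers to \cite[Theorem 13.15]{javadmas}.

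\textbf{Step 1 is unnecessary.} Membership in $N^{+}$, which you flag as the main obstacle, is already contained in Theorem~\ref{pro:mainfaaqaqaqq5} as stated; you even invoke it yourself for the Blaschke condition. Since $\log|O_{|f|}(x+iy)|=(P_y*\log|f|)(x)$, the inequality $\log|F(x+iy)|\le (P_y*\log|f|)(x)$ says exactly that $|F|\le|O_{|f|}|$ on $\mathbb{C}_+$. So $I:=F/O_{|f|}$ is holomorphic (because $O_{|f|}$ has no zeros) and satisfies $|I|\le 1$. Moreover $|I|=1$ a.e. on $\mathbb{R}$, because $F(\cdot+iy)\to f$ a.e., $P_y*\log|f|\to\log|f|$ a.e., and $f\neq 0$ a.e. Thus $I$ is inner, and the factorization of inner functions in $H^\infty(\mathbb{C}_+)$ gives $I=cE_\alpha BS_\sigma$. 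The Blaschke condition comes from $I\in H^\infty$. The facts $\alpha\ge 0$ and $\sigma\ge 0$ singular come from $-\log|I/B|$ being a positive harmonic function with boundary values $0$ a.e.

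Your uniform-integrability sketch would also work; it is essentially how that inequality is proved. Going through $N^+$ has the merit of resting on one textbook theorem, but here it only re-derives an inequality you already have. Step~3 is fine.

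\textbf{The constant $c$ cannot be absorbed.} The claim that the unimodular constant $c$ can be absorbed into $B$ or $O_{|f|}$ is false. With the normalizations fixed in the paper:
\begin{itemize}
\item $E_\alpha(i)=e^{-\alpha}>0$;
\item each Blaschke factor equals $\bigl|\frac{i-z_n}{i-\overline{z_n}}\bigr|>0$ at $z=i$;
\item $O_{|f|}(i)=\exp\bigl(\frac{1}{\pi}\int_{\mathbb{R}}\frac{\log|f(t)|}{1+t^{2}}\,dt\bigr)>0$;
\item $S_\sigma(i)>0$.
\end{itemize}
So the product is positive at $i$ whenever $F(i)\neq 0$. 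For example, $F(z)=(z+i)^{-2}\in H^{1}(\mathbb{C}_+)$ is outer and equals $-O_{|f|}$, since $O_{|f|}(i)=1/4$ while $F(i)=-1/4$. Hence it has no factorization of the displayed form.

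What your argument proves, and what is true, is $F=c\,E_\alpha B S_\sigma O_{|f|}$ with a unique $|c|=1$. You should state the result with this constant rather than claim it disappears.

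A related point: with the formula for $S_\sigma$ as printed, $|S_\sigma(x+iy)|=\exp\bigl((P_y*\sigma)(x)\bigr)\ge 1$, so that $S_\sigma$ is not inner. The genuinely inner singular factor has the opposite sign in the exponent. That is the convention your Step~2 implicitly uses when it writes $-\sigma$.
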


\medskip

For a detailed proof of Theorem~\ref{pro:main0apaaql0}, we refer the reader to \cite[Theorem 13.15]{javadmas}, as the argument follows the same lines.

\subsection{Campanato--Orlicz spaces}

\begin{theorem}[Theorem 5.2.1, \cite{yangLiangKy}]\label{pro:main2aaoamdp}
Let $\Phi \in \mathscr{L}$ and let $s \in \mathbb{N}$ satisfy $s \geq m_{\Phi}$, where $m_{\Phi}$ is defined by \eqref{eq:dqtap2}. Then the dual space of $H^{\Phi}(\mathbb{R})$, denoted by $(H^{\Phi}(\mathbb{R}))^*$, is the Campanato--Orlicz space $\mathcal{L}^{\Phi,s}(\mathbb{R})$ in the following sense: for every continuous linear functional $L \in (H^{\Phi}(\mathbb{R}))^*$, there exists a unique  $b \in \mathcal{L}^{\Phi,s}(\mathbb{R})$ such that
\[
L(f)=\int_{\mathbb{R}} f(x)\,\overline{b(x)}\,dx,
\]
for every $f \in H^{\Phi}(\mathbb{R})$. Moreover,
\[
\|b\|_{\mathcal{L}^{\Phi,s}(\mathbb{R})}
\approx
\|L\|_{(H^{\Phi}(\mathbb{R}))^*},
\]
where the implicit positive constants are independent of $b$.
\end{theorem}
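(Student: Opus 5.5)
The plan is the classical Fefferman--Stein duality argument, run through the atomic characterization of Proposition~\ref{pro:mainamp}(iii). Throughout I normalize the Campanato quantity so that the relevant local oscillation of $b$ on an interval $I$ is compared with $|I|\,\Phi^{-1}(1/|I|)$, the scaling dictated by condition (ii) in the definition of atoms. I use the concavity of $\Phi$ and the fact that $t\mapsto \Phi(t)/t$ is non-increasing, since $\Phi\in\mathscr{L}$.

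\emph{Step 1: $\mathcal{L}^{\Phi,s}\subset (H^{\Phi})^*$.} First I would prove a John--Nirenberg type lemma. It says that in the definition of $\mathcal{L}^{\Phi,s}(\mathbb{R})$ the $L^1$ mean oscillation $\int_I|b-\mathcal{P}_I^s(b)|$ may be replaced by the $L^2$ oscillation $|I|^{1/2}\bigl(\int_I|b-\mathcal{P}_I^s(b)|^2\bigr)^{1/2}$ with equivalent norms. This follows from the usual Calder\'on--Zygmund stopping-time argument, because the weight $\Phi^{-1}(1/|I|)$ is a doubling function of $|I|$.

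Let $a$ be a multiple of a $(\Phi,s)$-atom on $I$, and set $t:=\|a\|_{L^2}|I|^{-1/2}$. Using the vanishing moments of $a$ and then Cauchy--Schwarz,
\[
\Bigl|\int a\,\overline{b}\Bigr|=\Bigl|\int_I a\,\overline{(b-\mathcal{P}_I^s b)}\Bigr|\lesssim \frac{t}{\Phi^{-1}(1/|I|)}\,\|b\|_{\mathcal{L}^{\Phi,s}}.
\]
For an atomic decomposition $f=\sum_j a_j$ normalized by $\sum_j |I_j|\Phi(t_j)\le 1$, each term satisfies $\Phi(t_j)\le 1/|I_j|$, so $t_j\le \Phi^{-1}(1/|I_j|)$. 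The monotonicity of $\Phi(t)/t$ then gives
\[
\frac{t_j}{\Phi^{-1}(1/|I_j|)}\le |I_j|\,\Phi(t_j).
\]
Summing over $j$ yields $\sum_j|\langle a_j,b\rangle|\lesssim \|b\|_{\mathcal{L}^{\Phi,s}}$. By homogeneity, the functional $L_b$ satisfies $|L_b(f)|\lesssim \|b\|_{\mathcal{L}^{\Phi,s}}\|f\|_{H^{\Phi}}$ on finite atomic combinations.

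\emph{Main obstacle.} The delicate point is that this bound is proved for one decomposition, whereas $L_b(f)$ must be well defined independently of the decomposition. I would handle it as in \cite{yangLiangKy}. First I would show that on the dense subspace of finite linear combinations of atoms, the finite-atomic quasi-norm is equivalent to $\|\cdot\|_{H^{\Phi}}$; this is the Meda--Sj\"ogren--Vallarino type result, obtained via the Calder\'on reproducing formula and a careful grand maximal function decomposition. Then I would extend $L_b$ by continuity, using the density of these combinations (or of $\mathscr{S}$, by Proposition~\ref{pro:main2aaoamp}) in $H^{\Phi}(\mathbb{R})$.

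\emph{Step 2: every $L$ is of this form.} For a fixed interval $I$, let $L^2_s(I)$ denote the functions in $L^2(I)$ with vanishing moments up to order $s$. Every $g\in L^2_s(I)$ is a multiple of an atom, so
\[
|L(g)|\lesssim \|L\|\,\frac{\|g\|_{L^2}\,|I|^{-1/2}}{\Phi^{-1}(1/|I|)}.
\]
The Riesz representation theorem on the Hilbert space $L^2_s(I)$ then gives $b_I\in L^2(I)$, unique modulo $\mathcal{P}_s$, with $L(g)=\int g\,\overline{b_I}$. Taking $I_k=[-k,k]$ and subtracting suitable polynomials, the representatives become compatible, and they glue to a locally $L^2$ function $b$. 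Testing $L$ against $g=(b-\mathcal{P}_I^s b)\chi_I$, normalized to be an atom, gives the $L^2$-Campanato bound $\|b\|_{\mathcal{L}^{\Phi,s}}\lesssim\|L\|$. The density from Step 1 shows that $L=L_b$ on all of $H^{\Phi}(\mathbb{R})$.

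\emph{Uniqueness.} If two functions $b,b'$ represent $L$, then $b-b'$ annihilates every $L^2_s(I)$. Hence $b-b'$ agrees with a polynomial of degree at most $s$ on each interval, and by compatibility on nested intervals it equals a single polynomial of degree at most $s$ on $\mathbb{R}$. Such polynomials are zero in $\mathcal{L}^{\Phi,s}$, so $b=b'$ in that space.
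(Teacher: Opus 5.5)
The paper gives no proof of this statement; it is quoted from \cite{yangLiangKy}. Your outline is correct and is essentially the standard argument behind that citation:
\begin{itemize}
\item vanishing moments together with $t_j/\Phi^{-1}(1/|I_j|)\le |I_j|\,\Phi(t_j)$ (monotonicity of $\Phi(t)/t$) bound $f\mapsto\int f\,\overline{b}$ on finite atomic sums;
\item the finite-atomic norm equivalence plus density make the extension well defined, and you rightly single this out as the real obstacle;
\item the converse is a Riesz representation on the moment-free spaces $L^2_s(I)$, glued over $[-k,k]$ and tested against $(b-\mathcal{P}_I^s b)\chi_I$.
\end{itemize}

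The one genuine variation is how you pass from the $L^1$-oscillation in the definition of $\mathcal{L}^{\Phi,s}(\mathbb{R})$ to the $L^2$-oscillation seen by $L^2$-atoms. In \cite{yangLiangKy}, the $L^1$-Campanato norm is paired with bounded (continuous) atoms in the first direction. The converse direction yields an $L^q$ bound that trivially dominates the $L^1$ one, so the equivalence of the $L^1$ and $L^q$ Campanato norms comes out of the duality instead of being an input. You keep the $L^2$-atoms of Proposition~\ref{pro:mainamp}(iii) throughout. This only requires the finite-atomic theory for $q=2$, not the more delicate $L^\infty$ version, and you pay for it with a direct John--Nirenberg lemma. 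That lemma is needed only in Step~1, since your test function in Step~2 already produces the $L^2$ bound.

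Two corrections.

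First, the property your stopping-time iteration uses is not that $\Phi^{-1}(1/|I|)$ is doubling. It is that the admissible mean oscillation $\bigl(|I|\,\Phi^{-1}(1/|I|)\bigr)^{-1}$ is non-decreasing in $|I|$, so the normalization only improves on the Calder\'on--Zygmund subintervals. This is once more the monotonicity of $\Phi(t)/t$. Doubling alone cannot suffice: for $\Phi(t)=t^{2}$ the weight $|I|^{-1/2}$ is doubling, yet $|x|^{-1/2}$ satisfies the $L^1$ condition without being locally square integrable.

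Second, drop the parenthetical appeal to density of $\mathscr{S}(\mathbb{R})$. Proposition~\ref{pro:main2aaoamp} asserts no density, and its inclusion is in fact false. For $\varphi\in\mathscr{S}(\mathbb{R})$ with $\int\varphi\neq 0$ one has $|(P_{|x|}\ast\varphi)(x)|\gtrsim |x|^{-1}$ for large $|x|$, while $\Phi(t)\ge \Phi(1)\,t$ for $t\le 1$. Hence $\varphi_P^*\notin L^{\Phi}(\mathbb{R})$, and so $\varphi\notin H^{\Phi}(\mathbb{R})$ by Proposition~\ref{pro:mainamp}(i). The dense class you need, and actually use, is the finite linear combinations of atoms.
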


\medskip
\noindent

The following proposition shows that $\mathcal{L}^{\Phi,s}(\mathbb{R})$ is a natural intermediate space between Schwartz test functions space and its dual.

\begin{proposition}\label{pro:main2aaoammqdp1}
Let $\Phi \in \mathscr{L}$ and let $s \in \mathbb{N}$ with $s \geq m_{\Phi}$, where $m_{\Phi}$ is defined in \eqref{eq:dqtap2}. Then
\[
\mathscr{S}(\mathbb{R}) \subset \mathcal{L}^{\Phi,s}(\mathbb{R}) \subset \mathscr{S}'(\mathbb{R}).
\]
\end{proposition}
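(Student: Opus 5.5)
We prove the two inclusions separately. The second one, $\mathcal{L}^{\Phi,s}(\mathbb{R})\subset\mathscr{S}'(\mathbb{R})$, needs a polynomial growth bound on Campanato--Orlicz functions. The first one, $\mathscr{S}(\mathbb{R})\subset\mathcal{L}^{\Phi,s}(\mathbb{R})$, is a direct estimate of the oscillation on each interval.

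\textbf{First inclusion.} Let $\varphi\in\mathscr{S}(\mathbb{R})$ and fix an interval $I$. Since $\mathcal{P}_I^s(\varphi)$ is the $L^2(I)$-orthogonal projection onto $\mathcal{P}_s(\mathbb{R})$, we have $\|\mathcal{P}_I^s g\|_{L^\infty(I)}\le C_s\,\|g\|_{L^\infty(I)}$ with $C_s$ independent of $I$ (rescale $I$ to $[0,1]$). Because $\mathcal{P}_I^s$ reproduces polynomials of degree at most $s$, we also have $\|\varphi-\mathcal{P}^s_I\varphi\|_{L^\infty(I)}\le (1+C_s)\inf_{Q\in\mathcal{P}_s}\|\varphi-Q\|_{L^\infty(I)}$. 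This gives two bounds for $\int_I|\varphi-\mathcal{P}_I^s\varphi|$:
\begin{itemize}
\item[(a)] by Taylor's formula, $\lesssim |I|^{s+2}\|\varphi^{(s+1)}\|_\infty$;
\item[(b)] by comparing with $Q=0$, $\lesssim \|\varphi\|_{L^1}+|I|\,\|\varphi\|_{L^\infty}$. For large $I$ one can even take $Q=0$ and use $\int_I|\mathcal{P}_I^s\varphi|\le |I|^{1/2}\|\mathcal{P}^s_I\varphi\|_{L^2(I)}\le |I|^{1/2}\|\varphi\|_{L^2}$, so the bound becomes $\lesssim \|\varphi\|_{L^1}+|I|^{1/2}\|\varphi\|_{L^2}$.
\end{itemize}
Next we need the size of $\Phi^{-1}(1/|I|)$. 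By Lemma~\ref{pro:maiaqaq1q8}(v), $\Phi^{-1}\in\mathscr{U}^{1/p}$, and $\Phi^{-1}(t)/t$ is nondecreasing because $\Phi(t)/t$ is nonincreasing. Hence
\begin{itemize}
\item for $|I|\le1$: $\Phi^{-1}(1/|I|)\lesssim |I|^{-1/p}\Phi^{-1}(1)$;
\item for $|I|\ge1$: $\Phi^{-1}(1/|I|)\le |I|^{-1}\Phi^{-1}(1)$.
\end{itemize}
For small intervals we combine with (a). Since $s\ge m_\Phi=\lfloor1/p\rfloor-1$, we get $s+2>1/p$, so $|I|^{s+2-1/p}$ stays bounded. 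For large intervals we combine with (b), using the $L^2$ version to get $|I|^{-1}(\|\varphi\|_{L^1}+|I|^{1/2}\|\varphi\|_{L^2})$, which stays bounded. Therefore $\|\varphi\|_{\mathcal{L}^{\Phi,s}}<\infty$.

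\textbf{Second inclusion.} The plan is to show that every $b\in\mathcal{L}^{\Phi,s}(\mathbb{R})$ satisfies
\[
\int_{\mathbb{R}}\frac{|b(x)|}{(1+|x|)^{N}}\,dx<\infty
\]
for some $N$. This makes $b$ a tempered distribution through $\varphi\mapsto\int b\varphi$.

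The shortest route is duality, via Theorem~\ref{pro:main2aaoamdp}. By Proposition~\ref{pro:main2aaoamp}, $\mathscr{S}\subset H^\Phi(\mathbb{R})$, and $b$ acts on $H^\Phi$ boundedly through the pairing. So the map $\varphi\mapsto\langle\varphi,b\rangle$ is defined on $\mathscr{S}$, and it suffices to show $\|\varphi\|_{H^\Phi}\lesssim$ a finite Schwartz seminorm of $\varphi$. This follows from the proof of Proposition~\ref{pro:main2aaoamp}: $\varphi_P^*(x)\le C\,\mathcal{M}(\varphi)(1+|x|)^{-N}$, where $\mathcal{M}(\varphi)$ is a Schwartz seminorm. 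Then homogeneity of the Luxemburg norm gives $\|\varphi\|_{H^\Phi}\lesssim\mathcal{M}(\varphi)$ up to the lower-type constants.

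To avoid relying on the pairing being defined pointwise for all Schwartz functions, I would also give the direct argument. This is the main technical step. Take the dyadic intervals $I_k=[-2^k,2^k]$ and bound the coefficients of the polynomials $\mathcal{P}^s_{I_k}b$ as $k$ increases, by telescoping. First,
\[
\|\mathcal{P}^s_{I_{k+1}}b-\mathcal{P}^s_{I_k}b\|_{L^\infty(I_k)}\lesssim \frac{1}{|I_k|}\int_{I_{k+1}}|b-\mathcal{P}^s_{I_{k+1}}b|\lesssim \frac{1}{|I_k|\,\Phi^{-1}(1/|I_{k+1}|)}\lesssim 2^{k(1/p-1)}.
\]
Applying Markov's inequality to extend this bound from $I_k$ to $I_j$ with $j>k$ gives the polynomial growth
\[
\int_{I_k}|b|\lesssim 2^{kM}\qquad\text{for some } M.
\]
Summing over dyadic annuli then yields the weighted integrability above with $N>M+1$.
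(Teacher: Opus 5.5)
Your duality ``shortest route'' for $\mathcal{L}^{\Phi,s}(\mathbb{R})\subset\mathscr{S}'(\mathbb{R})$ fails and should be deleted. It rests on $\mathscr{S}(\mathbb{R})\subset H^{\Phi}(\mathbb{R})$, which is false.

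The bound $\varphi_P^*(x)\le C\mathcal{M}(\varphi)(1+|x|)^{-N}$ fails whenever $\int_{\mathbb{R}}\varphi\neq 0$. Take $t=|x|$ and use $|\partial_u P_t(u)|\lesssim t^{-2}$; then
\[
(P_{|x|}*\varphi)(x)=\frac{1}{2\pi|x|}\int_{\mathbb{R}}\varphi(y)\,dy+O\bigl(|x|^{-2}\bigr),
\]
so $\varphi_P^*(x)\gtrsim |x|^{-1}$ for large $|x|$. Since $\Phi(u)/u$ is nonincreasing, $\Phi(u)\ge \Phi(1)\,u$ for $0<u\le 1$. Hence $\int_{\mathbb{R}}\Phi(\varphi_P^*(x)/\lambda)\,dx=\infty$ for every $\lambda>0$. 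Already for $\Phi(t)=t$ a Gaussian is not in $H^1(\mathbb{R})$, and for lower type $p<1$ higher moments must vanish too.

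So a general Schwartz function cannot be inserted into the $(H^\Phi)^*$ pairing at all. The issue is not only whether that pairing is an absolutely convergent integral, which is the reason you gave for adding the direct argument. This is exactly the paper's own argument for this inclusion: it invokes Proposition~\ref{pro:main2aaoamp}, whose proof makes the same false decay claim. The paper's proof of the second inclusion therefore has the same gap.

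Your direct dyadic argument is correct, and it is what actually proves the inclusion. It combines:
\begin{itemize}
\item the identity $\mathcal{P}^s_{I_{k+1}}b-\mathcal{P}^s_{I_k}b=\mathcal{P}^s_{I_k}\bigl(\mathcal{P}^s_{I_{k+1}}b-b\bigr)$ on $I_k$;
\item the averaged bound $\|\mathcal{P}^s_I g\|_{L^\infty(I)}\le C_s|I|^{-1}\int_I|g|$, which you use implicitly and which follows by the same rescaling as the $L^\infty$ bound you state;
\item the lower bound $\Phi^{-1}(u)\gtrsim u^{1/p}$ for $u\le 1$;
\item the growth bound $\|Q\|_{L^\infty(\lambda I)}\le C_s\lambda^{s}\|Q\|_{L^\infty(I)}$ for $Q\in\mathcal{P}_s(\mathbb{R})$ and $\lambda\ge1$;
\item summation over dyadic annuli.
\end{itemize}
Together these give $\int_{\mathbb{R}}|b|(1+|x|)^{-N}\,dx<\infty$, the Campanato--Orlicz analogue of the classical BMO growth estimate. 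It does not even need $s\ge m_\Phi$. Present it as the proof, not as a backup.

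For $\mathscr{S}(\mathbb{R})\subset\mathcal{L}^{\Phi,s}(\mathbb{R})$, your route differs from the paper's, and both work. The paper notes that $f\mapsto\langle f,\varphi\rangle$ is bounded on $H^{\Phi}(\mathbb{R})$ by Proposition~\ref{pro:maaqqaq5}, which is correct. It then represents this functional via Theorem~\ref{pro:main2aaoamdp} by some $g\in\mathcal{L}^{\Phi,s}$ and concludes $\varphi=g$. Strictly, testing against atoms only gives $\varphi-g\in\mathcal{P}_s(\mathbb{R})$, but that suffices because the Campanato seminorm does not see such polynomials. That argument is valid but depends on the full duality theorem.

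Your oscillation estimate is elementary and self-contained. It uses Taylor's formula on $|I|\le 1$, where $s+2>1/p$ is exactly the role of $s\ge m_\Phi$. On $|I|\ge1$ it uses the $L^1/L^2$ bound together with $\Phi^{-1}(1/|I|)\le |I|^{-1}\Phi^{-1}(1)$. It yields directly
\[
\|\varphi\|_{\mathcal{L}^{\Phi,s}}\lesssim\|\varphi^{(s+1)}\|_{L^\infty}+\|\varphi\|_{L^1}+\|\varphi\|_{L^2},
\]
without passing through $H^{\Phi}(\mathbb{R})$ at all.
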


\begin{proof}
Since $(H^{\Phi}(\mathbb{R}))^* = \mathcal{L}^{\Phi,s}(\mathbb{R})$ and $\mathscr{S}(\mathbb{R}) \subset H^{\Phi}(\mathbb{R})$, any $f \in \mathcal{L}^{\Phi,s}(\mathbb{R})$ defines a continuous linear functional on $\mathscr{S}(\mathbb{R})$. Thus,
\[
\mathcal{L}^{\Phi,s}(\mathbb{R}) \subset \mathscr{S}'(\mathbb{R}).
\]

Conversely, let $\varphi \in \mathscr{S}(\mathbb{R})$ and define
\[
T_{\varphi}(f) := \int_{\mathbb{R}} f(x)\varphi(x)\,dx, \quad f \in H^{\Phi}(\mathbb{R}).
\]
By Proposition~\ref{pro:maaqqaq5}, there exists a constant $C>0$, independent of $\varphi$, such that
\[
|T_{\varphi}(f)| \leq C\,\mathcal{N}_{m_{\Phi}}(\varphi)\,\|f\|_{H^{\Phi}(\mathbb{R})}^{lux},
\quad \forall f \in H^{\Phi}(\mathbb{R}).
\]
Then $T_{\varphi}$ is a continuous linear functional on $H^{\Phi}(\mathbb{R})$, and hence $T_{\varphi} \in (H^{\Phi}(\mathbb{R}))^*$. By duality, there exists $g \in \mathcal{L}^{\Phi,s}(\mathbb{R})$ such that
\[
T_{\varphi}(f) = \int_{\mathbb{R}} f(x)g(x)\,dx, \quad \forall f \in H^{\Phi}(\mathbb{R}).
\]
This implies $\varphi = g$ almost everywhere, and therefore $\varphi \in \mathcal{L}^{\Phi,s}(\mathbb{R})$.

\medskip

The proof is complete.
\end{proof}

  \begin{proposition}\label{pro:main2aaoammqdp}
  Let $\Phi \in \mathscr{L}$ and let $s \in \mathbb{N}$ be such that $s \geq m_{\Phi}$, where $m_{\Phi}$ is defined as in \eqref{eq:dqtap2}. For all $f \in \mathcal{L}^{\Phi,s}(\mathbb{R})$ and $y>0$, we have $(P_{y}\ast f) \in \mathcal{L}^{\Phi,s}(\mathbb{R})$ and 
  \begin{equation}\label{eq:dqtaqmhqmp2}
   \|(P_{y}\ast f)\|_{\mathcal{L}^{\Phi,s}(\mathbb{R})} \leq C\, \| f\|_{\mathcal{L}^{\Phi,s}(\mathbb{R})},
  \end{equation}
  where $C$ is a positive constant independent of $f$ and $y$.
  \end{proposition}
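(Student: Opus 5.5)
Two routes suggest themselves: a duality argument built on Theorem~\ref{pro:main2aaoamdp}, and a direct computation. I would run the duality argument first and keep the direct computation as a cross-check.

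\textbf{Duality route.} Fix $y>0$ and $f\in\mathcal{L}^{\Phi,s}(\mathbb{R})$. By Theorem~\ref{pro:main2aaoamdp}, $f$ induces a functional $L_f(g)=\int_{\mathbb{R}} g\,\overline{f}\,dx$ on $H^{\Phi}(\mathbb{R})$, and
\[
\|L_f\|_{(H^{\Phi}(\mathbb{R}))^{*}}\approx\|f\|_{\mathcal{L}^{\Phi,s}(\mathbb{R})}.
\]
Since $P_y$ is even and real, we have formally, for $g\in H^{\Phi}(\mathbb{R})$,
\[
\int_{\mathbb{R}} g\,\overline{P_y\ast f}\,dx=\int_{\mathbb{R}} (P_y\ast g)\,\overline{f}\,dx=L_f(P_y\ast g).
\]
Proposition~\ref{pro:mainqaq2aop} gives $\|P_y\ast g\|_{H^{\Phi}(\mathbb{R})}^{lux}\le C\|g\|_{H^{\Phi}(\mathbb{R})}^{lux}$ with $C$ independent of $y$. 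Hence $g\mapsto L_f(P_y\ast g)$ is a bounded functional of norm at most $C\|L_f\|$. By the uniqueness part of Theorem~\ref{pro:main2aaoamdp}, this functional is represented by $P_y\ast f$, which gives $P_y\ast f\in\mathcal{L}^{\Phi,s}(\mathbb{R})$ and \eqref{eq:dqtaqmhqmp2}.

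Two technical points need care.
\begin{itemize}
\item[(a)] $P_y\ast f$ must be well defined. I would use the standard fact that Campanato functions grow at most polynomially in an averaged sense, namely $\int |f(x)|(1+|x|)^{-2-\epsilon}dx<\infty$ for suitable $\epsilon$ given $s\ge m_\Phi$. Poisson decay is only $|x|^{-2}$, though, so if $s\ge1$ this convolution may diverge. It may then have to be interpreted modulo polynomials of degree $\le s$, consistent with $\mathcal{L}^{\Phi,s}$ being defined modulo $\mathcal{P}_s$.
\item[(b)] The Fubini swap above. I would justify it first for $g$ a finite linear combination of $(\Phi,s)$-atoms. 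For such $g$, $P_y\ast g$ is smooth and has decay of order $|x|^{-s-2}$ thanks to the vanishing moments (cf.\ the Taylor expansion in the proof of Lemma~\ref{pro:main2aaqop}). I would then extend by density of finite atomic sums, via Proposition~\ref{pro:mainamp}(iii).
\end{itemize}

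\textbf{Direct route (cross-check).} For an interval $I$, I would bound $\int_I|P_y\ast f-\mathcal{P}_I^s(P_y\ast f)|\,dx$ by $\int_I|P_y\ast f-Q|\,dx$ with the choice $Q=P_y\ast\mathcal{P}_I^s(f)$, which is again a polynomial of degree $\le s$. This is legitimate because the $L^2(I)$-projection is comparable to the best $L^1$ approximation, up to a constant depending on $s$. Minkowski's inequality then gives
\[
\int_I |P_y\ast(f-\mathcal{P}_I^s f)|\,dx\le\int P_y(u)\int_I |f(x-u)-\mathcal{P}_I^sf(x-u)|\,dx\,du.
\]
Here one must compare $\mathcal{P}_I^s f$ with $\mathcal{P}_{I-u}^s f$. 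When $|u|\lesssim|I|$, I would use the doubling estimate between comparable intervals. When $|u|$ is large, I would chain dyadically enlarging intervals, using the lower type of $\Phi$ to sum $\Phi^{-1}(1/(2^k|I|))$ against $2^k|I|$ and the tail $|u|^{-2}$ of $P_y$.

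\textbf{Main obstacle.} In both routes the difficulty is step (a)/(b): the slow decay of the Poisson kernel against the polynomial growth of Campanato functions. In the direct route this appears as the large-$|u|$ chain estimate, where the polynomial drift over distance $|u|$ has size roughly $|u|^{s}$ and only $u^{-2}$ decay is available to absorb it. I expect that the estimate closes only modulo $\mathcal{P}_s$, or after exploiting the moment cancellation through the duality route, which is why I would lead with duality.
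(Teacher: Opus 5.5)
Your duality route is the paper's proof: the paper pairs $P_y\ast f$ with $g\in H^{\Phi}(\mathbb{R})$, moves the convolution onto the atoms of $g$ by Fubini, and concludes from Theorem~\ref{pro:main2aaoamdp} together with the $y$-uniform bound of Proposition~\ref{pro:mainqaq2aop}. Your caveats (a)--(b) are well founded, and your ordering of the argument is sounder than the paper's, which has two defects your version avoids:
\begin{itemize}
\item[(1)] The paper justifies Fubini by local integrability of $f$ alone, which is not enough.
\item[(2)] The paper writes $\langle P_y\ast f,g\rangle=\sum_j\langle P_y\ast f,a_j\rangle$, and this step already presupposes the conclusion.
\end{itemize}
You instead represent the bounded functional $g\mapsto L_f(P_y\ast g)$ first, and then identify its representative with $P_y\ast f$ by testing only on atoms. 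That identification holds modulo $\mathcal{P}_s(\mathbb{R})$, and needs a regularized Poisson kernel when $s\ge 1$.
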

  
  \begin{proof}
  Let $g \in H^{\Phi}(\mathbb{R})$. By the atomic decomposition of $H^{\Phi}(\mathbb{R})$, there exists a sequence of $(\Phi,s)$-atoms $(a_j)_j$ such that
  \[
  f=\sum_{j} a_j \quad \text{in } \mathscr{S}'(\mathbb{R}),
  \quad\text{and}\quad
  \|f\|_{H^{\Phi}(\mathbb{R})}^{\mathrm{lux}}
  \approx 
  \|f\|_{H^{\Phi,s}_{\mathrm{at}}(\mathbb{R})}^{\mathrm{lux}}.
  \]
 By definition, each atom $a_j$ belongs to $L^{2}(\mathbb{R})$ and satisfies $\supp(a_j)\subset I_j$, where $I_j$ is a bounded interval. In particular, $a_j \in L^1(\mathbb{R})$. Since $P_y \in L^1(\mathbb{R})$, $a_j \in L^1(\mathbb{R})$, and $f \in L^1_{\mathrm{loc}}(\mathbb{R})$, the function $(x,t) \mapsto P_y(x-t)f(t)a_j(x)$ is integrable on $\mathbb{R}^2$. Hence, by Fubini's theorem, we can interchange the order of integration and obtain
  \[
  \langle P_y * f, a_j \rangle
  = \int_{\mathbb{R}} \int_{\mathbb{R}} P_y(x-t) f(t) a_j(x)\,dt\,dx
  = \int_{\mathbb{R}} f(t) \left( \int_{\mathbb{R}} P_y(x-t) a_j(x)\,dx \right) dt.
  \]
  Recognizing the convolution, we get
  \[
  \langle P_y * f, a_j \rangle = \int_{\mathbb{R}} f(t) (P_y * a_j)(t)\,dt = \langle f, P_y * a_j \rangle.
  \]
 Since the series $\sum_j a_j$ converges in $H^{\Phi}(\mathbb{R})$, and convolution with $P_y \in L^1(\mathbb{R})$ is continuous on $H^{\Phi}(\mathbb{R})$ (see Proposition \ref{pro:mainqaq2aop}), we have
  \[
  P_y * g = \sum_j P_y * a_j \quad \text{in } H^{\Phi}(\mathbb{R}).
  \]
  Therefore,
  \[
  \langle P_y * f, g \rangle
  = \sum_j \langle P_y * f, a_j \rangle
  = \sum_j \langle f, P_y * a_j \rangle
  = \left\langle f, \sum_j P_y * a_j \right\rangle
  = \langle f, P_y * g \rangle.
  \]
Now, since $f \in \mathcal{L}^{\Phi,s}(\mathbb{R}) = (H^{\Phi}(\mathbb{R}))^*$, we obtain by duality
  \[
  |\langle P_y * f, g \rangle|
  = |\langle f, P_y * g \rangle|
  \le C\, \|f\|_{\mathcal{L}^{\Phi,s}(\mathbb{R})} \|P_y * g\|_{H^{\Phi}(\mathbb{R})}^{lux}.
  \]
  Using again Proposition \ref{pro:mainqaq2aop}, we have
  \[
  \|P_y * g\|_{H^{\Phi}(\mathbb{R})}^{lux} \le C \|g\|_{H^{\Phi}(\mathbb{R})}^{lux}.
  \]
  Hence,
  \[
  |\langle P_y * f, g \rangle|
  \le C\, \|f\|_{\mathcal{L}^{\Phi,s}(\mathbb{R})} \|g\|_{H^{\Phi}(\mathbb{R})}^{lux}.
  \]
Finally, taking the supremum over all $g \in H^{\Phi}(\mathbb{R})$ such that $\|g\|_{H^{\Phi}(\mathbb{R})}^{lux} \le 1$, we conclude that
  \[
  \|P_y * f\|_{\mathcal{L}^{\Phi,s}(\mathbb{R})}
  \approx \sup_{\|g\|_{H^{\Phi}} \le 1} |\langle P_y * f, g \rangle|
  \le C \|f\|_{\mathcal{L}^{\Phi,s}(\mathbb{R})}.
  \]
  
  This proves that $P_y * f \in \mathcal{L}^{\Phi,s}(\mathbb{R})$ and completes the proof.
  \end{proof}

\begin{lemme}\label{pro:mainq5amaq}
Let $\Phi \in \mathscr{L}$ and let $s\in\mathbb{N}$ with $s\ge m_{\Phi}$. 
If $f \in \mathcal{L}^{\Phi,s}(\mathbb{R})$, then $\mathcal{H}(f)\in \mathcal{L}^{\Phi,s}(\mathbb{R})$ and
\begin{equation}\label{eq:deaqaa21}
\|\mathcal{H}(f)\|_{\mathcal{L}^{\Phi,s}(\mathbb{R})}
\leq C\,\|f\|_{\mathcal{L}^{\Phi,s}(\mathbb{R})},
\end{equation}
where $C$ is independent of $f$.
\end{lemme}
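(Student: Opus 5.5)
The natural route is by duality, exactly as in the proof of Proposition~\ref{pro:main2aaoammqdp}. The Hilbert transform is bounded on $H^{\Phi}(\mathbb{R})$ by Theorem~\ref{pro:main5aq5}(i). It is formally anti-self-adjoint, $\langle \mathcal{H}g, h\rangle = -\langle g, \mathcal{H}h\rangle$. And $\mathcal{L}^{\Phi,s}(\mathbb{R}) = (H^{\Phi}(\mathbb{R}))^{*}$ by Theorem~\ref{pro:main2aaoamdp}. So I would \emph{define} $\mathcal{H}(f)$, for $f\in\mathcal{L}^{\Phi,s}(\mathbb{R})$, as the element of $\mathcal{L}^{\Phi,s}(\mathbb{R})$ representing the functional
\[
L_f(g) := -\int_{\mathbb{R}} f(x)\,\overline{\mathcal{H}(\overline{g})(x)}\,dx, \qquad g\in H^{\Phi}(\mathbb{R}).
\]
Equivalently, $\mathcal{H}(f)$ is the transpose of $-\mathcal{H}$ acting on $H^{\Phi}(\mathbb{R})$. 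I would then check that this agrees with the principal value definition, modulo polynomials of degree at most $s$, whenever the latter makes sense.

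The steps are as follows. First, $L_f$ is continuous, since
\[
|L_f(g)| \lesssim \|f\|_{\mathcal{L}^{\Phi,s}}\,\|\mathcal{H}(\overline g)\|_{H^{\Phi}}^{lux} \lesssim \|f\|_{\mathcal{L}^{\Phi,s}}\,\|g\|_{H^{\Phi}}^{lux}
\]
by Theorem~\ref{pro:main5aq5}(i). Here $\overline{g}\in H^\Phi(\mathbb{R})$ with the same norm, because the grand maximal function is invariant under conjugation for real test functions, up to constants. Second, Theorem~\ref{pro:main2aaoamdp} gives a unique $b\in\mathcal{L}^{\Phi,s}(\mathbb{R})$ with $L_f(g)=\int g\,\overline b$ and $\|b\|_{\mathcal{L}^{\Phi,s}}\approx\|L_f\|\lesssim\|f\|_{\mathcal{L}^{\Phi,s}}$. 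Third, I would identify $b=\mathcal{H}(f)$. On a $(\Phi,s)$-atom $a$, $\mathcal{H}(a)\in L^2$ decays like $|x|^{-s-2}$ away from its support. This follows from the vanishing moments, via the same Taylor expansion of $1/(t-x)$ used in Lemma~\ref{pro:main2aaqop}. Since $f$ grows at most polynomially, $\int |f|(1+|x|)^{-s-2}dx<\infty$ should follow from the Campanato condition by a standard telescoping over dyadic intervals, so Fubini applies to $\int f\,\overline{\mathcal{H}\bar a}$. The resulting kernel expression is exactly the pairing of $a$ against the regularized Hilbert transform
\[
\widetilde{\mathcal{H}}f(x)=\frac1\pi\,\mathrm{p.v.}\!\int f(t)\Big(\frac{1}{x-t}-\sum_{k=0}^{s}\frac{x^k}{(-t)^{k+1}}\chi_{|t|>1}\Big)dt .
\]
This is well defined modulo $\mathcal{P}_s$, and on atoms the subtracted polynomial integrates to zero. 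Finally, atoms span a dense subspace by Proposition~\ref{pro:mainamp}(iii), so $b$ coincides with $\mathcal{H}(f)$ as an element of $\mathcal{L}^{\Phi,s}$, that is, modulo $\mathcal{P}_s(\mathbb{R})$.

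The main obstacle is this identification step. Specifically, one must justify the integrability bound $\int|f|(1+|x|)^{-s-2}<\infty$ for $f\in\mathcal{L}^{\Phi,s}$ and the Fubini interchange. To control the growth of $\mathcal{P}^s_{2^kI}(f)$, I would first try comparing minimizing polynomials on nested dyadic intervals, which gives a growth like $2^{k}\,2^{k}\Phi^{-1}(2^{-k})$ that is summable against the weight. An alternative route is a direct $T1$/molecule-type estimate, avoiding duality altogether: bound $\Phi^{-1}(1/|I|)\int_I|\mathcal{H}f-P|$ by splitting $f=(f-\mathcal{P}^s_{2I}f)\chi_{2I}+(f-\mathcal{P}^s_{2I}f)\chi_{(2I)^c}$. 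The local part is handled by the $L^2$ boundedness of $\mathcal{H}$ together with the John--Nirenberg type self-improvement. The far part is handled by Taylor expansion of the kernel to order $s+1$. I would fall back on this route only if the duality argument's identification proves too delicate.
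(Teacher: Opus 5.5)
Your proposal is correct and follows essentially the paper's own argument: both rest on the anti-self-adjointness of $\mathcal{H}$, its boundedness on $H^{\Phi}(\mathbb{R})$ from Theorem~\ref{pro:main5aq5}(i), and the duality $(H^{\Phi}(\mathbb{R}))^{*}=\mathcal{L}^{\Phi,s}(\mathbb{R})$ of Theorem~\ref{pro:main2aaoamdp}, followed by a supremum over the unit ball of $H^{\Phi}(\mathbb{R})$. Your extra step of defining $\mathcal{H}(f)$ modulo $\mathcal{P}_{s}(\mathbb{R})$ through a regularized kernel is more careful than the paper, which asserts $\langle\mathcal{H}(f),\varphi\rangle=-\langle f,\mathcal{H}(\varphi)\rangle$ for $\varphi\in\mathscr{S}(\mathbb{R})$ although $\mathcal{H}(\varphi)\notin\mathscr{S}(\mathbb{R})$ in general (the kernel's tail is integrable against $|f|$ because $s\ge m_{\Phi}$ forces $1/p<s+2$, with $p$ the lower type of $\Phi$); only fix two harmless slips: your $L_f$ as written is represented by $\mathcal{H}(\bar f)$ rather than $\mathcal{H}(f)$, and the regularized kernel should read $\frac{1}{x-t}+\sum_{k=0}^{s}x^{k}t^{-k-1}\chi_{|t|>1}$.
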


\begin{proof}
Since $f \in \mathcal{L}^{\Phi,s}(\mathbb{R}) \subset \mathscr{S}'(\mathbb{R})$, the Hilbert transform $\mathcal{H}(f)$ is well-defined in $\mathscr{S}'(\mathbb{R})$ and satisfies
\[
\langle \mathcal{H}(f), \varphi \rangle = -\langle f, \mathcal{H}(\varphi)\rangle,
\quad \forall \varphi \in \mathscr{S}(\mathbb{R}).
\]
By density, this identity extends to all $g \in L^2(\mathbb{R})$.

Let $g \in H^{\Phi}(\mathbb{R})$. By the atomic decomposition of $H^{\Phi}(\mathbb{R})$, there exist $(\Phi,s)$-atoms $(a_j)_j$ such that
\[
g = \sum_j a_j \quad \text{in } H^{\Phi}(\mathbb{R}),
\quad\text{and}\quad
\|g\|_{H^{\Phi}}^{lux} \approx \|g\|_{H^{\Phi,s}_{at}}^{lux}.
\]
Since each $a_j \in L^2(\mathbb{R})$, we have
\[
\langle \mathcal{H}(f), a_j \rangle = -\langle f, \mathcal{H}(a_j)\rangle.
\]
By linearity and convergence of the atomic decomposition,
\[
\langle \mathcal{H}(f), g \rangle = -\langle f, \mathcal{H}(g)\rangle.
\]

Using the duality $(H^{\Phi}(\mathbb{R}))^{*}=\mathcal{L}^{\Phi,s}(\mathbb{R})$ and the boundedness of $\mathcal{H}$ on $H^{\Phi}(\mathbb{R})$ (see Theorem~\ref{pro:main5aq5}), we obtain
\[
|\langle \mathcal{H}(f), g \rangle|
= |\langle f, \mathcal{H}(g)\rangle|
\leq \|f\|_{\mathcal{L}^{\Phi,s}}\|\mathcal{H}(g)\|_{H^{\Phi}}^{lux}
\lesssim \|f\|_{\mathcal{L}^{\Phi,s}}\|g\|_{H^{\Phi}}^{lux}.
\]
Taking the supremum over $\|g\|_{H^{\Phi}}^{lux}\le 1$, we conclude that
\[
\|\mathcal{H}(f)\|_{\mathcal{L}^{\Phi,s}(\mathbb{R})}
\lesssim \|f\|_{\mathcal{L}^{\Phi,s}(\mathbb{R})}.
\]
\end{proof}

\medskip
\noindent
The following theorem gives properties Szeg\"o projection as an operator from $\mathcal{L}^{\Phi,s}(\mathbb{R})$ to its holomorphic counterpart..
%The surjectivity reflects the fact that every analytic function in $\mathcal{H}\mathcal{L}^{\Phi,s}(\mathbb{C}_{+})$ admits a boundary trace belonging to $\mathcal{L}^{\Phi,s}(\mathbb{R})$, thereby establishing a deep connection between real analysis and complex analysis.

\begin{theorem}\label{thm:szego_projection_surjective}
Let $\Phi \in \mathscr{L}$ and let $s \in \mathbb{N}$ be such that $s \ge m_{\Phi}$. 
Then the Szeg\"o projection
\[
\mathcal{P}_{S} : \mathcal{L}^{\Phi,s}(\mathbb{R}) \longrightarrow \mathcal{H}\mathcal{L}^{\Phi,s}(\mathbb{C}_{+})
\]
is a linear, continuous, and surjective operator.
\end{theorem}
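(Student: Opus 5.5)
Linearity of $\mathcal{P}_S$ on $\mathcal{L}^{\Phi,s}(\mathbb{R})$ is immediate once we fix how $\mathcal{P}_S$ acts on this space. By Proposition~\ref{pro:main2aaoammqdp1}, $\mathcal{L}^{\Phi,s}(\mathbb{R})\subset\mathscr{S}'(\mathbb{R})$, and the Cauchy kernel $t\mapsto (t-z)^{-1}$ is smooth with polynomial decay of order one. We will therefore realize $\mathcal{P}_S(f)$ through the boundary identity
\[
\mathcal{P}_S(f)(x+iy)=\tfrac12\bigl(P_y\ast f+i\,P_y\ast\mathcal{H}(f)\bigr)(x).
\]
This is consistent with the boundary behaviour $\lim_{y\to0}\mathcal{P}_S(a)=\tfrac12(a+i\mathcal{H}a)$ used in the proof of Theorem~\ref{pro:main5aq5}. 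The right-hand side makes sense because both $P_y\ast f$ and $P_y\ast \mathcal{H}(f)$ are defined: by Lemma~\ref{pro:mainq5amaq}, $\mathcal{H}(f)\in \mathcal{L}^{\Phi,s}(\mathbb{R})$, and elements of $\mathcal{L}^{\Phi,s}$ are Poisson integrable modulo polynomials of degree $\le s$ (the same way BMO is). If necessary, the Cauchy kernel is replaced by its subtracted version $\frac{1}{t-z}-\sum_{k\le s}(\cdots)$; this only changes $\mathcal{P}_S(f)$ by a polynomial, consistent with $f$ being an equivalence class modulo $\mathcal{P}_s$. Linearity is then clear.

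\textbf{Continuity.} Set $g:=\tfrac12(f+i\mathcal{H}(f))$. By Lemma~\ref{pro:mainq5amaq}, $g\in\mathcal{L}^{\Phi,s}(\mathbb{R})$ with $\|g\|_{\mathcal{L}^{\Phi,s}}\lesssim\|f\|_{\mathcal{L}^{\Phi,s}}$. Moreover $\widehat g=\chi_{[0,\infty)}\widehat f$ is supported in $[0,\infty)$, because $\widehat{\mathcal{H}f}=-i\,\mathrm{sgn}(\xi)\widehat f$. Hence $\mathcal{P}_S(f)(x+iy)=(P_y\ast g)(x)$, which is exactly of the form required in the definition of $\mathcal{H}\mathcal{L}^{\Phi,s}(\mathbb{C}_+)$. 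By the definition of the quasi-norm as an infimum,
\[
\|\mathcal{P}_S(f)\|_{\mathcal{H}\mathcal{L}^{\Phi,s}}\le\|g\|_{\mathcal{L}^{\Phi,s}}\lesssim\|f\|_{\mathcal{L}^{\Phi,s}}.
\]
Proposition~\ref{pro:main2aaoammqdp} additionally gives $\sup_{y>0}\|\mathcal{P}_S(f)(\cdot+iy)\|_{\mathcal{L}^{\Phi,s}}\lesssim\|f\|_{\mathcal{L}^{\Phi,s}}$, so the slices are uniformly controlled.

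\textbf{Surjectivity.} Let $G\in\mathcal{H}\mathcal{L}^{\Phi,s}(\mathbb{C}_+)$, so that $G=P_y\ast f$ with $f\in\mathcal{L}^{\Phi,s}$ and $\operatorname{supp}\widehat f\subset[0,\infty)$. For such $f$ we have $\mathcal{H}(f)=-if$ modulo polynomials of degree $\le s$, since $-i\,\mathrm{sgn}(\xi)=-i$ on the support of $\widehat f$, up to a possible point mass at $0$, which only produces polynomials. Hence $\tfrac12(f+i\mathcal{H}f)=f$, and so $\mathcal{P}_S(f)=G$. The same observation shows that $\mathcal{P}_S$ acts as the identity on boundary values of elements of $\mathcal{H}\mathcal{L}^{\Phi,s}$, which justifies the description $\mathcal{H}\mathcal{L}^{\Phi,s}=\mathcal{P}_S(\mathcal{L}^{\Phi,s})$ stated in the introduction.

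\textbf{Main obstacle.} The delicate step is justifying the identity $\mathcal{P}_S(f)=P_y\ast\tfrac12(f+i\mathcal{H}f)$ for $f\in\mathcal{L}^{\Phi,s}$, which are merely tempered distributions defined modulo $\mathcal{P}_s$. We will do this by duality, mirroring the proof of Lemma~\ref{pro:mainq5amaq}. For a $(\Phi,s)$-atom $a$, the identity holds by the $L^2$ theory. Then, for each fixed $z$, we pair both sides against atoms, use the duality $(H^\Phi(\mathbb{R}))^*=\mathcal{L}^{\Phi,s}(\mathbb{R})$ of Theorem~\ref{pro:main2aaoamdp}, and pass to limits through the atomic decomposition together with the boundedness of $\mathcal{H}$ and of $P_y\ast$ on $H^\Phi(\mathbb{R})$ (Theorem~\ref{pro:main5aq5}(i) and Proposition~\ref{pro:mainqaq2aop}).
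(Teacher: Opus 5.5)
Your proposal is correct and is essentially the paper's own argument spelled out. The paper proves this theorem in one line by invoking Proposition~\ref{pro:main2aaoammqdp} and Lemma~\ref{pro:mainq5amaq}, and your ingredients are exactly how those two results fit together: the identity $\mathcal{P}_S(f)(\cdot+iy)=P_y\ast\tfrac12\bigl(f+i\mathcal{H}(f)\bigr)$, the infimum defining $\|\cdot\|_{\mathcal{H}\mathcal{L}^{\Phi,s}}$, and, for surjectivity, the description $\mathcal{H}\mathcal{L}^{\Phi,s}(\mathbb{C}_{+})=\mathcal{P}_S\bigl(\mathcal{L}^{\Phi,s}(\mathbb{R})\bigr)$.

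One of your side remarks is inaccurate. Elements of $\mathcal{L}^{\Phi,s}(\mathbb{R})$ are not in general Poisson integrable modulo $\mathcal{P}_{s}(\mathbb{R})$ ``the same way BMO is''. For $\Phi(t)=t^{p}$ with $p<1/2$ and $1/p\notin\mathbb{N}$, the function $|t|^{1/p-1}$ lies in $\mathcal{L}^{\Phi,s}(\mathbb{R})$, yet $\int_{\mathbb{R}}\bigl|\,|t|^{1/p-1}-P(t)\bigr|\,\frac{dt}{1+t^{2}}=\infty$ for every polynomial $P$. So the subtracted-kernel regularization you mention is needed for $P_y\ast f$ as well as for the Cauchy integral; the paper's definitions also pass over this point.
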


\begin{proof}
The result follows directly from Proposition~\ref{pro:main2aaoammqdp} and Lemma~\ref{pro:mainq5amaq}.
\end{proof}

\section{Proof of the main results.} 
We prove in this section all our main results stated in the first section.

\subsection{Proof of Theorem \ref{pro:main2aqop}.}

\medskip

\noindent
Let us start by recalling the following result.

\begin{proposition}[Lemma 3, \cite{voltiko}]\label{pro:main 5aqaq2pl}
Let  $\Phi_{1}, \Phi_{2}$ and $\Phi_{3}$  be growth functions of the lower type. $L^{\Phi_{3}}=L^{\Phi_{1}}.L^{\Phi_{2}}$ if and only if  $\Phi_{3}^{-1} \sim \Phi_{1}^{-1}.\Phi_{2}^{-1}$,  where  $\Phi_{j}^{-1}$ is the inverse function of  $\Phi_{j}$, for  $j\in \{1,2,3\}$. 
\end{proposition}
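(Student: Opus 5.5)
We prove the two implications separately, working with $\Phi_3^{-1}=\Phi_1^{-1}\cdot\Phi_2^{-1}$ exactly; this is allowed since equivalent growth functions give the same spaces with equivalent quasi-norms. We read the identity $L^{\Phi_3}=L^{\Phi_1}\cdot L^{\Phi_2}$ quantitatively, as in \eqref{eq:ap21}: $\|fg\|_{L^{\Phi_3}}^{lux}\lesssim \|f\|_{L^{\Phi_1}}^{lux}\|g\|_{L^{\Phi_2}}^{lux}$, and every $h\in L^{\Phi_3}$ factors as $h=fg$ with $\|f\|^{lux}_{L^{\Phi_1}}\|g\|^{lux}_{L^{\Phi_2}}\lesssim\|h\|^{lux}_{L^{\Phi_3}}$. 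If only the set identity is assumed, we first recover these norm bounds by a closed-graph/Baire argument in the quasi-Banach (F-space) setting.

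\emph{Sufficiency.} The key tool is the Young-type inequality
\[
\Phi_3(uv)\le \Phi_1(u)+\Phi_2(v),\qquad u,v\ge 0 .
\]
To prove it, set $s=\max\{\Phi_1(u),\Phi_2(v)\}$. Then $u\le\Phi_1^{-1}(s)$ and $v\le \Phi_2^{-1}(s)$, so $uv\le\Phi_3^{-1}(s)$ and hence $\Phi_3(uv)\le s$.

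Now take $f\in L^{\Phi_1}$ and $g\in L^{\Phi_2}$ with Luxemburg norms $\lambda_1,\lambda_2$. The inequality gives $\int\Phi_3(|fg|/(\lambda_1\lambda_2))\le 2$. The lower type of $\Phi_3$ (Lemma \ref{pro:mainfqmpq5}) converts this into $\|fg\|_{L^{\Phi_3}}^{lux}\lesssim\lambda_1\lambda_2$.

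For the factorization, take $h\in L^{\Phi_3}$ with $\|h\|^{lux}=1$, so that $\int\Phi_3(|h|)\le1$. Put $w=\Phi_3(|h|)$ and define
\[
f=\frac{h}{|h|}\,\Phi_1^{-1}(w),\qquad g=\Phi_2^{-1}(w).
\]
Then $fg=\frac{h}{|h|}\Phi_3^{-1}(w)=h$, while $\int\Phi_1(|f|)=\int\Phi_2(|g|)=\int w\le 1$. Homogeneity then gives the norm estimate in general.

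\emph{Necessity.} We test on indicator functions. A direct computation gives $\|\chi_E\|_{L^\Phi}^{lux}=1/\Phi^{-1}(1/|E|)$, and since $E\subset\mathbb R$ is arbitrary, $t=1/|E|$ ranges over all of $(0,\infty)$.

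\emph{First inequality.} Write $\chi_E=\chi_E\cdot\chi_E$ and apply the product bound. This gives $1/\Phi_3^{-1}(t)\lesssim 1/(\Phi_1^{-1}(t)\Phi_2^{-1}(t))$, that is, $\Phi_1^{-1}\Phi_2^{-1}\lesssim\Phi_3^{-1}$.

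\emph{Reverse inequality.} Factor $\chi_E=fg$ with $ab\lesssim 1/\Phi_3^{-1}(t)$, where $a=\|f\|_{L^{\Phi_1}}^{lux}$ and $b=\|g\|_{L^{\Phi_2}}^{lux}$. By Chebyshev's inequality applied to $\int\Phi_1(|f|/a)\le1$, the set where $|f|>a\,\Phi_1^{-1}(2/|E|)$ has measure at most $|E|/2$. The same holds for $g$. Hence some point of $E$ lies in neither exceptional set, and at that point
\[
1=|f||g|\le ab\,\Phi_1^{-1}(2t)\Phi_2^{-1}(2t).
\]
Therefore $\Phi_3^{-1}(t)\lesssim\Phi_1^{-1}(2t)\Phi_2^{-1}(2t)\lesssim\Phi_1^{-1}(t)\Phi_2^{-1}(t)$, the last step by the upper type of the inverses (Lemma \ref{pro:maiaqaq1q8}(v)).

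Finally, we translate the two-sided bound $\Phi_3^{-1}\approx\Phi_1^{-1}\Phi_2^{-1}$ into the equivalence \eqref{eq:equivalent} of the corresponding functions, again using monotonicity and type.

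The main obstacle is this necessity direction: extracting quantitative constants from a bare set-theoretic equality of spaces (the closed-graph step), and carrying out the Chebyshev lower bound carefully.
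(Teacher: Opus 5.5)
The paper does not prove this proposition; it quotes it from Volberg--Tolokonnikov and uses it, through the product lemma that follows it, in the proof of Theorem~\ref{pro:main2aqop}. Under the quantitative reading you adopt, which is how the paper uses the statement, your argument is a correct self-contained proof.

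Your sufficiency half is essentially what the paper does inside the proof of Theorem~\ref{pro:main2aqop}. There the factors are $\Phi_k^{-1}\circ\Phi_3(\|f\|)\,\Phi_k^{-1}\circ\Phi_3(|f|/\|f\|)$, whereas you normalize $\|h\|=1$ and use homogeneity. Your inequality $\Phi_3(uv)\le\max\{\Phi_1(u),\Phi_2(v)\}$, combined with the lower type of $\Phi_3$, gives a clean proof of the product estimate.

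The necessity half, via indicators and Chebyshev, goes beyond anything in the paper. It relies on $\mathbb{R}$ having infinite non-atomic measure, so that $t=1/|E|$ sweeps all of $(0,\infty)$. Two details need fixing:
\begin{itemize}
\item Each Chebyshev exceptional set has measure \emph{strictly} less than $|E|/2$, because $\Phi_1(|f|/a)>2/|E|$ strictly on it. This strictness is what guarantees a good point of $E$, so you should say it, or use the threshold $3/|E|$ instead.
\item Lemma~\ref{pro:maiaqaq1q8}(v) only covers $\mathscr{L}_p$. For an arbitrary lower type $p_j$ you need $\Phi_j^{-1}(\lambda x)\le (C\lambda)^{1/p_j}\Phi_j^{-1}(x)$ for $\lambda\ge 1$. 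This follows in one line from the lower-type inequality.
\end{itemize}

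The step that is not justified as written is your parenthetical reduction from the bare set identity to norm bounds.
\begin{itemize}
\item For the inclusion $L^{\Phi_1}\cdot L^{\Phi_2}\subset L^{\Phi_3}$ it is fine. Each $f\mapsto fg$ has closed graph, and the uniform boundedness principle in F-spaces makes the bound uniform over $\|g\|_{L^{\Phi_2}}\le 1$.
\item For the factorization bound $\|f\|_{L^{\Phi_1}}\|g\|_{L^{\Phi_2}}\lesssim\|h\|_{L^{\Phi_3}}$ it fails. You would need an open mapping theorem for the surjective \emph{bilinear} map $(f,g)\mapsto fg$, and no such theorem holds in general. A Baire argument would require showing that $\{fg:\|f\|_{L^{\Phi_1}}\le n,\ \|g\|_{L^{\Phi_2}}\le n\}$ is closed in $L^{\Phi_3}$. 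That is a Fatou-type property of the product quasi-norm, it is the real content, and you do not address it.
\end{itemize}

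The easiest repair is to avoid norm bounds altogether, since your Chebyshev estimate is local to $E$. Suppose $\Phi_3^{-1}(t_k)\ge 4^{k/p_3}\Phi_1^{-1}(t_k)\Phi_2^{-1}(t_k)$ along some sequence $t_k$. Take disjoint $E_k\subset\mathbb{R}$ with $|E_k|=1/t_k$, and set $h=\sum_k 2^{-k/p_3}\Phi_3^{-1}(t_k)\chi_{E_k}$.
\begin{itemize}
\item The lower type of $\Phi_3$ gives $\int_{\mathbb{R}}\Phi_3(|h|)\lesssim\sum_k 2^{-k}<\infty$, so $h\in L^{\Phi_3}$.
\item If $h=fg$, then at a good point of each $E_k$ you get $2^{-k/p_3}\Phi_3^{-1}(t_k)\le \|f\|_{L^{\Phi_1}}\|g\|_{L^{\Phi_2}}\Phi_1^{-1}(2t_k)\Phi_2^{-1}(2t_k)$.
\item Hence $2^{k/p_3}\lesssim\|f\|_{L^{\Phi_1}}\|g\|_{L^{\Phi_2}}$ for every $k$, which is impossible.
\end{itemize}
A similar disjoint-hump construction also handles the product inclusion, so the whole necessity direction can be proved from the set identity alone.
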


The following result is an immediate consequence of Proposition \ref{pro:main 5aqaq2pl}. Therefore, the proof will be omitted.

\begin{lemme}\label{pro:main 5aqp1aqqq2pl}
Let  $\Phi_{1}, \Phi_{2}$ and $\Phi_{3}$  be growth functions of the lower type such that  $\Phi_{3}^{-1} \sim \Phi_{1}^{-1}.\Phi_{2}^{-1}$,  where  $\Phi_{j}^{-1}$ is the inverse function of  $\Phi_{j}$, for  $j\in \{1,2,3\}$.  For all  $F\in   H^{\Phi_{1}}(\mathbb{C}_{+})$ and  $G\in   H^{\Phi_{2}}(\mathbb{C}_{+})$, the product   $FG \in H^{\Phi_{3}}(\mathbb{C}_{+})$. Moreover,  
\begin{equation}\label{eq:eaamqq1}
\|FG\|_{H^{\Phi_{3}}}^{lux} \leq C \|F\|_{H^{\Phi_{1}}}^{lux}\|G\|_{H^{\Phi_{2}}}^{lux}, 
\end{equation}
where $C$ is constant independent of $F$ and $G$.  
\end{lemme}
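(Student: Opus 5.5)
The plan is to reduce the statement to the pointwise-in-$y$ Orlicz inequality supplied by Proposition~\ref{pro:main 5aqaq2pl}. Fix $F\in H^{\Phi_{1}}(\mathbb{C}_{+})$, $G\in H^{\Phi_{2}}(\mathbb{C}_{+})$ and $y>0$. The functions $F(\cdot+iy)$ and $G(\cdot+iy)$ lie in $L^{\Phi_{1}}(\mathbb{R})$ and $L^{\Phi_{2}}(\mathbb{R})$, with Luxemburg norms at most $\|F\|_{H^{\Phi_{1}}}^{lux}$ and $\|G\|_{H^{\Phi_{2}}}^{lux}$. Proposition~\ref{pro:main 5aqaq2pl} gives $L^{\Phi_{3}}=L^{\Phi_{1}}\cdot L^{\Phi_{2}}$, but as an equality of sets. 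I need the quantitative form of the inclusion $L^{\Phi_{1}}\cdot L^{\Phi_{2}}\subset L^{\Phi_{3}}$, namely
\[
\|uv\|_{L^{\Phi_{3}}}^{lux}\le C\|u\|_{L^{\Phi_{1}}}^{lux}\|v\|_{L^{\Phi_{2}}}^{lux}.
\]
This can be extracted either from the closed graph theorem or directly, and I would prove it directly.

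By the remark after Lemma~\ref{pro:mainfqmaqpq5}, I may replace $\Phi_{3}$ by an equivalent function and assume $\Phi_{3}^{-1}=\Phi_{1}^{-1}\Phi_{2}^{-1}$. The key elementary inequality is
\[
ab\le \Phi_{3}^{-1}\bigl(\Phi_{1}(a)+\Phi_{2}(b)\bigr),\qquad a,b\ge 0.
\]
To see it, put $t=\Phi_{1}(a)+\Phi_{2}(b)$. Then $a\le\Phi_{1}^{-1}(t)$ and $b\le \Phi_{2}^{-1}(t)$, so $ab\le \Phi_{1}^{-1}(t)\Phi_{2}^{-1}(t)=\Phi_{3}^{-1}(t)$. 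Hence $\Phi_{3}(ab)\le \Phi_{1}(a)+\Phi_{2}(b)$.

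Now apply this with $a=|u|/\|u\|^{lux}$ and $b=|v|/\|v\|^{lux}$ and integrate. This gives $\int\Phi_{3}\bigl(|uv|/(\|u\|\|v\|)\bigr)\le 2$. To pass from this to a Luxemburg norm bound, I use that $\Phi_{3}$ has positive lower type $r$ (Lemma~\ref{pro:mainfqmpq5}). Taking $\lambda$ a suitable constant $C_r$ then gives $\int\Phi_{3}\bigl(|uv|/(C_r\|u\|\|v\|)\bigr)\le 1$. The lower type inequality $\Phi_3(st)\le Ct^{r}\Phi_3(s)$ for $t\le1$ handles this.

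Applying the bound with $u=F(\cdot+iy)$ and $v=G(\cdot+iy)$ gives $\|(FG)(\cdot+iy)\|_{L^{\Phi_{3}}}^{lux}\le C\|F\|_{H^{\Phi_{1}}}^{lux}\|G\|_{H^{\Phi_{2}}}^{lux}$. The constant is uniform in $y$. Since $FG$ is holomorphic, taking the supremum over $y>0$ yields \eqref{eq:eaamqq1}.

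The only delicate point is the equivalence constant in $\Phi_{3}^{-1}\sim\Phi_{1}^{-1}\Phi_{2}^{-1}$. The definition \eqref{eq:equivalent} concerns inverses here, so I must check that it translates into an equivalence $\Phi_{3}\sim\widetilde\Phi_{3}$ of the functions themselves, where $\widetilde\Phi_{3}$ is defined by $\widetilde\Phi_3^{-1}=\Phi_1^{-1}\Phi_2^{-1}$. Inverting $c^{-1}\psi(c^{-1}t)\le\Phi_3^{-1}(t)\le c\,\psi(ct)$ gives $\widetilde\Phi_3(c^{-1}s)/c\le \Phi_3(s)\le c\,\widetilde\Phi_3(cs)$, which is again of the form \eqref{eq:equivalent}. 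The Luxemburg norms of equivalent functions are comparable, so the reduction is legitimate and the constant $C$ depends only on $\Phi_1,\Phi_2,\Phi_3$.
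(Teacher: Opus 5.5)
Your proof is correct, and it follows the skeleton the paper has in mind: an Orlicz-space product estimate on each line $\mathbb{R}+iy$, uniform in $y$, followed by the supremum over $y>0$. The paper itself gives no proof. It declares the lemma an immediate consequence of the Volberg--Tolokonnikov result (Lemma 3 of \cite{voltiko}) and stops.

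You instead prove the Orlicz estimate from scratch, in two steps:
\begin{itemize}
\item The Young-type inequality $\Phi_3(ab)\le\Phi_1(a)+\Phi_2(b)$, valid once $\Phi_3^{-1}=\Phi_1^{-1}\Phi_2^{-1}$ exactly, bounds the modular by $2$.
\item The lower type $r$ of $\Phi_3$ (Lemma \ref{pro:mainfqmpq5}) turns this into a Luxemburg bound with constant $(2C)^{1/r}$.
\end{itemize}

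What your route buys is exactly the point you flag. The cited result, as stated in the paper, is only the set identity $L^{\Phi_3}=L^{\Phi_1}\cdot L^{\Phi_2}$. Getting the quantitative bound \eqref{eq:eaamqq1} from it needs a further closed-graph/Banach--Steinhaus argument for bilinear maps on quasi-Banach spaces, which the paper leaves implicit. Your computation is elementary, self-contained, and gives an explicit constant.

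You also correctly check that the equivalence $\Phi_3^{-1}\sim\Phi_1^{-1}\Phi_2^{-1}$ of the inverses inverts to an equivalence $\Phi_3\sim\widetilde\Phi_3$ of the functions themselves. The paper passes over this detail.

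The paper's citation does carry more: the converse inclusion and the necessity of the condition. Neither is needed here, since the converse factorization in Theorem \ref{pro:main2aqop} is obtained separately through outer functions.
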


We can now prove Theorem \ref{pro:main2aqop}.
\proof[Proof of Theorem \ref{pro:main2aqop}.]
Without loss of generality, we can assume that  $\Phi_{3}^{-1}= \Phi_{1}^{-1}.\Phi_{2}^{-1}$. 

\medskip

Let  $0\not\equiv F\in H^{\Phi_{3}}(\mathbb{C}_{+})$.  According to Theorem \ref{pro:main0apaaql0}, there exists a unique decomposition of the function $F$ of the form
$$  F(z) = E_{\alpha}(z)B(z)S_{\sigma}(z)O_{|f|}(z), ~~\forall~z\in \mathbb{C}_{+}.   $$
Moreover, 
  $ \|F\|_{H^{\Phi}}^{lux}= \left\|O_{|f|}\right\|_{H^{\Phi}}^{lux}=\|f\|_{L^{\Phi}}^{lux}.$ For $k \in \{1,2\}$, put   $$ f_{k}= \Phi_{k}^{-1}\circ\Phi_{3}\left(\|f\|_{L^{\Phi_{3}}}^{lux}\right)  \Phi_{k}^{-1}\circ\Phi_{3}\left(\frac{|f|}{\|f\|_{L^{\Phi_{3}}}^{lux}}\right).        $$
By reasoning in a similar way to that of the proof of \cite[Theorem 3]{djefeuto1}, we show that  $f_{k}\in  L^{\Phi_{k}}\left(\mathbb{R}\right)$ and $\log|f_{k}| \in  L^{1}\left(\mathbb{R},\frac{dt}{1+t^{2}}\right)$. Moreover,    
$$ \|f_{k}\|_{L^{\Phi_{k}}}^{lux} \leq \Phi_{k}^{-1}\circ\Phi_{3}\left(\|f\|_{L^{\Phi_{3}}}^{lux}\right).  $$
We deduce that, $O_{|f_{k}|} \in H^{\Phi_{k}}(\mathbb{C}_{+})$ and 
$\|O_{|f_{k}|}\|_{H^{\Phi_{k}}}^{lux}=\|f_{k}\|_{L^{\Phi_{k}}}^{lux}$. Since  $\Phi_{3}^{-1}= \Phi_{1}^{-1}.\Phi_{2}^{-1}$, it follows that $|f| = f_{1}f_{2}$ and $O_{|f_{1}|}.O_{|f_{2}|}=O_{|f|}$. Moreover, 
\begin{equation}\label{eq:inegaaqlitedehay}
\|O_{|f_{1}|}\|_{H^{\Phi_{1}}}^{lux}\|O_{|f_{2}|}\|_{H^{\Phi_{2}}}^{lux}  \leq \|O_{|f|}\|_{H^{\Phi_{3}}}^{lux}.
\end{equation}
For $z \in \mathbb{C}_{+}$, put  
$$  F_{1}(z) =O_{|f_{1}|}(z)   \hspace*{0.5cm}\textrm{and} \hspace*{0.5cm} F_{2}(z) =E_{\alpha}(z)B(z)S_{\sigma}(z)O_{|f_{2}|}(z).  $$
By construction, $F_{1}$ and $F_{2}$ are analytic functions on $\mathbb{C}_{+}$ such that  $F_{1} \in H^{\Phi_{1}}(\mathbb{C}_{+})$ and $F_{2} \in H^{\Phi_{2}}(\mathbb{C}_{+})$. Moreover, $\|F_{1}\|_{H^{\Phi_{1}}}^{lux}=\|f_{1}\|_{L^{\Phi_{1}}}^{lux}$ and $\|F_{2}\|_{H^{\Phi_{2}}}^{lux}=\|f_{2}\|_{L^{\Phi_{2}}}^{lux}$, since  $E_{\alpha}B S_{\sigma}$  is inner function on $\mathbb{C}_{+}$. It follows that, for all $z\in \mathbb{C}_{+}$, 
$$ F(z) =E_{\alpha}(z)B(z)S_{\sigma}(z)O_{|f_{2}|}(z)O_{|f_{1}|}(z)=   F_{1}(z) F_{2}(z)$$ 
and
$$  \|F_{1}\|_{H^{\Phi_{1}}}^{lux}\|F_{2}\|_{H^{\Phi_{2}}}^{lux}=\|f_{1}\|_{L^{\Phi_{1}}}^{lux}\|f_{2}\|_{L^{\Phi_{2}}}^{lux} \leq \|f\|_{L^{\Phi_{3}}}^{lux} = \|F\|_{H^{\Phi_{3}}}^{lux} \lesssim \|F_{1}\|_{H^{\Phi_{1}}}^{lux}\|F_{2}\|_{H^{\Phi_{2}}}^{lux},
  $$
thanks Lemma \ref{pro:main 5aqp1aqqq2pl}.
\epf

\subsection{Proof of Theorem \ref{pro:main6maqmq4}.} 

\medskip

\noindent
This result is about the atomic decomposition of holomorphic Hardy-Orlicz spaces. 
The key idea is to use the Poisson integral representation together with the atomic decomposition on the Hardy-Orlicz spaces of the real line.

\proof[Proof of Theorem \ref{pro:main6maqmq4}.]

\noindent $(ii) \Rightarrow (i)$ follows directly from Theorem~\ref{pro:main6mmq4p}.

\medskip

\noindent $(i) \Rightarrow (ii)$.
Let $F \in H^{\Phi}(\mathbb{C}_{+})$. Then there exists $f \in H^{\Phi}(\mathbb{R})$ such that
\[
F(z) = (P_y * f)(x) = \mathcal{P}_S(f)(z), \quad \forall z = x + iy \in \mathbb{C}_{+},
\]
and
\[
\|F\|_{H^{\Phi}(\mathbb{C}_{+})}^{\mathrm{lux}} \approx \|f\|_{H^{\Phi}(\mathbb{R})}^{\mathrm{lux}}, 
\quad 
\lim_{y \to 0} \|F(\cdot + iy) - f\|_{H^{\Phi}(\mathbb{R})}^{\mathrm{lux}} = 0,
\]
by Theorems~\ref{pro:main6mmq4} and~\ref{pro:main6mmq4p}.

Since $f \in H^{\Phi}(\mathbb{R})$, it admits an atomic decomposition $f = \sum_j a_j$ in $H^{\Phi,s}_{\mathrm{at}}(\mathbb{R})$ with
\[
\|f\|_{H^{\Phi}(\mathbb{R})}^{\mathrm{lux}} \approx \|f\|_{H^{\Phi,s}_{\mathrm{at}}}^{\mathrm{lux}}.
\]
Hence, for all $z \in \mathbb{C}_{+}$,
\[
F(z) = \mathcal{P}_S(f)(z) = \sum_{j} \mathcal{P}_S(a_j)(z).
\]
\epf

\subsection{Proof of Theorem \ref{pro:main2aaop}.} 

\medskip

\noindent
The following proposition identifies the boundary pairing associated with functions in Hardy--Orlicz spaces. 
It shows that the duality can be expressed either in the upper half-plane or on the boundary.

\begin{proposition}\label{pro:main2aaqaop}
Let $\Phi \in \mathscr{L}$ and let $s \in \mathbb{N}$ be such that $s \ge m_{\Phi}$. 
Let $F \in H^{\Phi}(\mathbb{C}_{+})$ and $G \in \mathcal{H}\mathcal{L}^{\Phi,s}(\mathbb{C}_{+})$. Define
\[
T_G(F) := \lim_{y \to 0} \int_{\mathbb{R}} F(x+iy)\,\overline{G(x+iy)}\,dx.
\]
Then the above limit exists and satisfies
\[
T_G(F) = \int_{\mathbb{R}} f(x)\,\overline{g(x)}\,dx.
\]
Moreover, there exists a positive constant $C$, independent of $f$ and $g$, such that
\begin{equation}\label{eq:ineitmlqpqqehay}
|T_G(F)| \le C\, \|G\|_{\mathcal{H}\mathcal{L}^{\Phi,s}(\mathbb{C}_{+})}\, \|F\|_{H^{\Phi}(\mathbb{C}_{+})}^{\mathrm{lux}}.
\end{equation}
\end{proposition}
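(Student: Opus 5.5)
The plan is to reduce everything to the real-line duality of Theorem~\ref{pro:main2aaoamdp}, using the boundary representations already established. By Theorems~\ref{pro:main6mmq4} and~\ref{pro:main6mmq4p}, write $F=\mathcal{P}_S(f)$ with $F(x+iy)=(P_y\ast f)(x)$, where $f\in H^{\Phi}(\mathbb{R})$ is a bounded tempered distribution, $\|f\|_{H^\Phi(\mathbb{R})}^{lux}\approx\|F\|_{H^\Phi(\mathbb{C}_+)}^{lux}$, and $F(\cdot+iy)\to f$ in $H^\Phi(\mathbb{R})$. By definition of $\mathcal{H}\mathcal{L}^{\Phi,s}(\mathbb{C}_{+})$, choose $g\in\mathcal{L}^{\Phi,s}(\mathbb{R})$ with $\hat g$ supported in $[0,\infty)$, $G(x+iy)=(P_y\ast g)(x)$, and $\|g\|_{\mathcal{L}^{\Phi,s}}\le 2\|G\|_{\mathcal{H}\mathcal{L}^{\Phi,s}}$. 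Write $\langle h,k\rangle:=\int_{\mathbb{R}} h\,\overline{k}\,dx$ for the pairing of Theorem~\ref{pro:main2aaoamdp}; it satisfies $|\langle h,k\rangle|\lesssim \|k\|_{\mathcal{L}^{\Phi,s}}\|h\|^{lux}_{H^\Phi(\mathbb{R})}$.

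For fixed $y>0$, the integral $\int_{\mathbb{R}}F(x+iy)\overline{G(x+iy)}\,dx$ is $\langle P_y\ast f, P_y\ast g\rangle$. By Corollary~\ref{pro:main5aqaqplq5}, $P_y\ast f=F(\cdot+iy)\in H^\Phi(\mathbb{R})$. By Proposition~\ref{pro:main2aaoammqdp}, $P_y\ast g\in\mathcal{L}^{\Phi,s}(\mathbb{R})$ with norm $\lesssim\|g\|_{\mathcal{L}^{\Phi,s}}$ uniformly in $y$, so the pairing is defined. Next, move one Poisson kernel across, exactly as in the proof of Proposition~\ref{pro:main2aaoammqdp}: first check it on atoms by Fubini, then pass to general elements using convergence of the atomic decomposition in $H^\Phi(\mathbb{R})$ and the continuity of $P_y\ast\cdot$ from Proposition~\ref{pro:mainqaq2aop}. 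Since $P_y$ is real and even, this gives
\[
\langle P_y\ast f, P_y\ast g\rangle=\langle P_{2y}\ast f, g\rangle .
\]
Then
\[
\bigl|\langle P_{2y}\ast f,g\rangle-\langle f,g\rangle\bigr|\lesssim \|g\|_{\mathcal{L}^{\Phi,s}}\,\|P_{2y}\ast f-f\|^{lux}_{H^\Phi(\mathbb{R})}\longrightarrow 0 \quad (y\to 0)
\]
by \eqref{eq:dqtaqa2}. Hence the limit defining $T_G(F)$ exists and equals $\int_{\mathbb{R}} f\,\overline g\,dx$.

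The estimate \eqref{eq:ineitmlqpqqehay} then follows from duality:
\[
|T_G(F)|\lesssim\|g\|_{\mathcal{L}^{\Phi,s}}\|f\|^{lux}_{H^\Phi(\mathbb{R})}\lesssim \|G\|_{\mathcal{H}\mathcal{L}^{\Phi,s}}\|F\|^{lux}_{H^\Phi(\mathbb{C}_+)},
\]
where the last step uses the choice of $g$ and the norm equivalence of Theorem~\ref{pro:main6mmq4}.

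The main obstacle is that $g$ is only determined by $G$ up to something, so the identity $T_G(F)=\int f\overline g$ must not depend on which representative is chosen. Two representatives differ by $h\in\mathcal{L}^{\Phi,s}$ with $P_y\ast h=0$ for every $y$. The computation above, run with $h$ in place of $g$, gives $\langle f,h\rangle=\lim_{y\to0}\langle P_y\ast f,P_y\ast h\rangle=0$. So the value is independent of the representative: the boundary pairing vanishes whenever $P_y\ast h\equiv 0$, which is all the identity needs. A second point of care is justifying the symmetric exchange of $P_y$ in the pairing between $H^\Phi$ and $\mathcal{L}^{\Phi,s}$. The first route is the atomic argument above. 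If that fails, the fallback is density of $\mathscr{S}(\mathbb{R})$ combined with Proposition~\ref{pro:maaqqaq5}.
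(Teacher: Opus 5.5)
Your proposal is correct and follows the paper's proof essentially step by step. You represent $F$ and $G$ as Poisson integrals, move one Poisson kernel across to reach $\langle P_{2y}\ast f,g\rangle$, and then use the convergence $P_t\ast f\to f$ in $H^{\Phi}(\mathbb{R})$ from Proposition~\ref{pro:mainqaq2aop} together with the real-line duality of Theorem~\ref{pro:main2aaoamdp}, both for the limit and for the bound.

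You are more careful than the paper on two points it passes over: the justification of the kernel exchange, and the independence of $\int f\overline{g}$ from the chosen representative $g$. One small adjustment is needed: the quasi-norm $\|G\|_{\mathcal{H}\mathcal{L}^{\Phi,s}}$ is an infimum over all $g$ with $G=\mathcal{P}_S(g)$, without a Fourier-support condition. To get a representative with $\widehat{g}$ supported in $[0,\infty)$ and comparable norm, replace a near-optimal $g_0$ by $\tfrac12(g_0+i\mathcal{H}(g_0))$ and invoke Lemma~\ref{pro:mainq5amaq}. This yields a constant $C$ rather than $2$, which does not affect the conclusion.
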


\begin{proof}
Let $F \in H^{\Phi}(\mathbb{C}_{+})$ and $G \in \mathcal{H}\mathcal{L}^{\Phi,s}(\mathbb{C}_{+})$. 
Then there exist $f \in H^{\Phi}(\mathbb{R})$ and $g \in \mathcal{L}^{\Phi,s}(\mathbb{R})$ such that
\[
F(x+iy) = (P_y * f)(x), \qquad G(x+iy) = (P_y * g)(x) = \mathcal{P}_S(g)(x+iy),
\quad \forall x+iy \in \mathbb{C}_{+}.
\]
Moreover,
\[  \|F\|_{H^{\Phi}(\mathbb{C}_{+})}^{\mathrm{lux}} \approx \|f\|_{H^{\Phi}(\mathbb{R})}^{\mathrm{lux}},
 \qquad
\|G\|_{\mathcal{H}\mathcal{L}^{\Phi,s}(\mathbb{C}_{+})}
:= \inf \left\{ \|g\|_{\mathcal{L}^{\Phi,s}(\mathbb{R})} : G = \mathcal{P}_S(g) \right\}.
\]

We consider
\[
T_G(F) := \lim_{y \to 0^{+}} \int_{\mathbb{R}} F(x+iy)\,\overline{G(x+iy)}\,dx.
\]

Using the Poisson representations, we write
\[
\int_{\mathbb{R}} F(x+iy)\,\overline{G(x+iy)}\,dx
= \int_{\mathbb{R}} (P_y * f)(x)\, \overline{(P_y * g)(x)}\, dx.
\]
By the convolution identity for the Poisson kernel, we have
\[
\int_{\mathbb{R}} (P_y * f)(x)\, \overline{(P_y * g)(x)}\, dx
= \int_{\mathbb{R}} (P_{2y} * g)(x)\, \overline{g(x)}\, dx.
\]
Setting $t = 2y$, we obtain
\[
T_G(F)
= \lim_{t \to 0} \int_{\mathbb{R}} (P_t * f)(x)\, \overline{g(x)}\, dx.
\]
Using the duality between $H^{\Phi}(\mathbb{R})$ and $\mathcal{L}^{\Phi,s}(\mathbb{R})$, we have
\[
\left| \int_{\mathbb{R}} f(x)\, \overline{(P_t * g)(x)}\, dx
- \int_{\mathbb{R}} f(x)\, \overline{g(x)}\, dx \right|
= |\langle f, P_t * g - g \rangle|
\le C \, \|(P_t * f)-f\|_{H^{\Phi}(\mathbb{R})}^{\mathrm{lux}} \, \|g\|_{\mathcal{L}^{\Phi,s}(\mathbb{R})}.
\]
Since $P_t * f \to f$ in $H^{\Phi}(\mathbb{R})$ as $t \to 0^{+}$, it follows that
\[
\lim_{t \to 0^{+}} \int_{\mathbb{R}} (P_t * f)(x)\, \overline{g(x)}\, dx
= \int_{\mathbb{R}} f(x)\, \overline{g(x)}\, dx.
\]

Therefore,
\[
T_G(F) = \int_{\mathbb{R}} f(x)\, \overline{g(x)}\, dx.
\]

Moreover, using again the duality estimate,
\[
|T_G(F)| = |\langle f, g \rangle|
\le C \, \|f\|_{H^{\Phi}(\mathbb{R})}^{\mathrm{lux}} \, \|g\|_{\mathcal{L}^{\Phi,s}(\mathbb{R})}
\lesssim \|F\|_{H^{\Phi}(\mathbb{C}_{+})}^{\mathrm{lux}} \, \|G\|_{\mathcal{H}\mathcal{L}^{\Phi,s}(\mathbb{C}_{+})}.
\]

The proof is complete.
\end{proof}

\proof[Proof of Theorem \ref{pro:main2aaop}.]
Let   $T\in \left(H^{\Phi}(\mathbb{C}_{+})\right)^{*}$ and put    
 $$ \widetilde{T}(f)= T(\mathcal{P}_S(f)), \forall f \in H^{\Phi}(\mathbb{R}).    $$
By construction, $\widetilde{T}\in \left(H^{\Phi}(\mathbb{R})\right)^{*}$. Indeed,  
$$ | \widetilde{T}(f) | = | T(\mathcal{P}_S(f))| \leq \|T\|_{\left(H^{\Phi}(\mathbb{C}_{+})\right)^{*}} \|\mathcal{P}_S(f)\|_{H^{\Phi}}^{lux} \leq \|T\|_{\left(H^{\Phi}(\mathbb{C}_{+})\right)^{*}}\|f\|_{H^{\Phi}}^{lux},  $$
according to Theorem \ref{pro:main5aq5}. Since $\left(H^{\Phi}(\mathbb{R})\right)^{*}= \mathcal{L}^{\Phi,s}(\mathbb{R})$, according to Theorem \ref{pro:main2aaoamdp}, there is a unique function $ g\in \mathcal{L}^{\Phi,s}(\mathbb{R})$ such that $$ \widetilde{T}(f)= \langle f , g \rangle := \int_{\mathbb{R}}f(x)\overline{g(x)}dx, ~~ \forall~f\in H^{\Phi}(\mathbb{R}).   $$
For  $F\in H^{\Phi}(\mathbb{C}_{+})$, there exists a  function $f \in H^{\Phi}(\mathbb{R})$ such that  $$   F(x+iy)=(P_y * f)(x)=\mathcal{P}_S(f)(x+iy), ~~ \forall~ x+iy \in \mathbb{C}_{+},   $$
according to Theorem \ref{pro:main6mmq4p}.  We deduce that,
$$  T(F) = T(\mathcal{P}_S(f))= \widetilde{T}(f)=\int_{\mathbb{R}}f(x)\overline{g(x)}dx,   $$
where $\langle \cdot , \cdot \rangle$ denotes the duality pairing. It follows that
$$  T(F) = \lim_{y \to 0} \int_{\mathbb{R}} F(x+iy)\,\overline{\mathcal{P}_S(g)(x+iy)}\,dx,   $$
according to Proposition \ref{pro:main2aaqaop}. 
Therefore, $T$ is represented by $\mathcal{P}_S(g) \in \mathcal{H}\mathcal{L}^{\Phi,s}(\mathbb{C}_{+})$, which shows that
\[
\left(H^{\Phi}(\mathbb{C}_{+})\right)^{*}
\subset \mathcal{H}\mathcal{L}^{\Phi,s}(\mathbb{C}_{+}).
\]

The reverse inclusion follows from the continuity of the pairing, and we conclude that
\[
\left(H^{\Phi}(\mathbb{C}_{+})\right)^{*}
= \mathcal{H}\mathcal{L}^{\Phi,s}(\mathbb{C}_{+}).
\]
\epf

\subsection{Proof of  Theorem \ref{pro:mainfqmaqppaaqq5}.} 

We start by recalling the following duality result.
\begin{theorem}[Theorem 2.6, \cite{djefeuto}]\label{pro:main 5aqk3pl}
Let $\Phi \in \mathscr{U} \cap \nabla_{2}$ and let $\Psi$ be its complementary function. The topological dual of $H^{\Phi}(\mathbb{C}_{+})$,   $\left(H^{\Phi}(\mathbb{C}_{+})\right)^{*}$    is isomorphic to $H^{\Psi}(\mathbb{C}_{+})$, in the sense that, for all  $T\in \left(H^{\Phi}(\mathbb{C}_{+})\right)^{*}$, there is a unique $G\in H^{\Psi}(\mathbb{C}_{+})$ such that
 $$   \langle F,G  \rangle =T(F) := \lim_{y\to 0}\int_{\mathbb{R}}F(x+iy)\overline{G(x+iy)}dx, ~~ \forall~F\in H^{\Phi}(\mathbb{C}_{+}). 
   $$
Moreover,    
$$  \|G\|_{H^{\Psi}}^{lux} \approx \sup\{ |\langle F,G  \rangle| : F\in H^{\Phi}(\mathbb{C}_{+})~~ \text{with}~~\|F\|_{H^{\Phi}}^{lux} \leq 1 \}. $$   
\end{theorem}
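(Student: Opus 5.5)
The statement to prove is the duality $(H^{\Phi}(\mathbb{C}_{+}))^{*}\cong H^{\Psi}(\mathbb{C}_{+})$ for $\Phi\in\mathscr{U}\cap\nabla_{2}$. The plan is to pass to the boundary and use Orlicz duality on $\mathbb{R}$, $(L^{\Phi})^{*}=L^{\Psi}$. This identification holds because $\Phi\in\Delta_{2}$ (from the upper type) and $\Phi\in\nabla_2$ (so $\Psi\in\Delta_2$). The Riesz projection $\mathcal{P}_S$ then carries the $L^{\Phi}$ picture to the analytic one.

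\textbf{Boundedness of the pairing.} Take $G\in H^{\Psi}(\mathbb{C}_{+})$ and $F\in H^{\Phi}(\mathbb{C}_{+})$. By Theorem~\ref{pro:mainfaaqaqaqq5}, $F$ has a boundary function $f\in L^{\Phi}$ and $G$ has one, $g\in L^{\Psi}$, with equal Luxemburg norms. For each $y>0$, Hölder's inequality in Orlicz spaces gives
\[
\int_{\mathbb{R}}|F(x+iy)G(x+iy)|\,dx\le 2\|F(\cdot+iy)\|^{lux}_{L^\Phi}\|G(\cdot+iy)\|^{lux}_{L^\Psi}.
\]
To show the limit as $y\to0$ exists and equals $\int f\bar g$, I would use $F(\cdot+iy)=P_y*f$ and $G(\cdot+iy)=P_y*g$. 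This rests on the upper type $p>1$, where Poisson representation holds. The semigroup property then gives $\int (P_y*f)\overline{(P_y*g)}=\int (P_{2y}*f)\bar g$. Since $\Phi\in\Delta_2$, we have $P_{2y}*f\to f$ in $L^{\Phi}$, which yields the convergence. Hence $|\langle F,G\rangle|\lesssim\|F\|\|G\|$.

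\textbf{Representation.} Take $T\in(H^{\Phi})^{*}$. The boundary-value map identifies $H^{\Phi}(\mathbb{C}_+)$ isometrically with a closed subspace of $L^{\Phi}(\mathbb{R})$. Hahn--Banach extends $T$ to $L^{\Phi}$ with comparable norm, and Orlicz duality produces $h\in L^{\Psi}$ with $T(F)=\int f\bar h$. Set $G=\mathcal{P}_S(h)$. The key claim is that $\mathcal{P}_S$ is bounded from $L^{\Psi}$ into $H^{\Psi}(\mathbb{C}_+)$. The boundary value of $\mathcal{P}_S h$ is $\tfrac12(h+i\mathcal{H}h)$, and by Boyd's interpolation $\mathcal{H}$ is bounded on $L^{\Psi}$ precisely when the Boyd indices of $\Psi$ lie strictly in $(1,\infty)$. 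For $\Psi$ this follows from $\Phi\in\Delta_2\cap\nabla_2$: the complementary function of a function of lower type $p>1$ and upper type $q<\infty$ has types $q'$ and $p'$. Using $P_y*$-representations one then gets $\sup_y\|G(\cdot+iy)\|_{L^\Psi}\lesssim\|h\|_{L^\Psi}$. Finally, $\int f\bar h=\int f\,\overline{\mathcal{P}_S h}$ for $f$ an $H^{\Phi}$ boundary function, because $f$ annihilates the conjugate-analytic part $(I-\mathcal{P}_S)h$. I would verify this first for $f\in H^{\Phi}\cap H^2$ dense and $h\in L^{\Psi}\cap L^{2}$ using $L^2$ orthogonality, then pass to the limit by the continuity just established. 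Therefore $T(F)=\langle F,G\rangle$ and $\|G\|_{H^\Psi}\lesssim\|T\|$.

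\textbf{Uniqueness.} If $\langle F,G\rangle=0$ for all $F$, test against the Cauchy kernels $F_w(z)=(z-\bar w)^{-2}\in H^{\Phi}$. Then $\langle F_w,G\rangle$ is a constant multiple of $\overline{G'(w)}$ (or of $\overline{G(w)}$ with kernel power one, provided that kernel lies in $H^\Phi$; power two is safe since $|x|^{-2}$ is integrable at infinity for any $\Phi$ of lower type $>1/2$). So $G'\equiv0$, and $G\in H^\Psi$ forces $G\equiv0$ via Proposition~\ref{pro:main6mq4}. The reverse norm inequality $\sup_{\|F\|\le1}|\langle F,G\rangle|\lesssim\|G\|$ is the boundedness step above. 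The main obstacle is the boundedness of $\mathcal{P}_S$ on $L^{\Psi}$, that is, the Hilbert transform in Orlicz spaces. There I would invoke Boyd's theorem rather than the atomic machinery, which is unavailable for convex $\Psi$.
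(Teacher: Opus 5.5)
The paper does not prove this statement. It is quoted from \cite{djefeuto} (Theorem 2.6 there) and used only as a black box in part (i) of Theorem~\ref{pro:mainfqmaqppaaqq5}. Judged on its own, your argument is correct and follows the standard route.

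Your pairing step is the same Poisson-semigroup computation the paper does in Proposition~\ref{pro:main2aaqaop} for concave $\Phi$. There, the $H^{\Phi}$--$\mathcal{L}^{\Phi,s}$ duality is used; you use Orlicz--H\"older and $P_{2y}*f\to f$ in $L^{\Phi}$ (from $\Delta_2$) instead.

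For the representation, the paper's concave-case analogue (proof of Theorem~\ref{pro:main2aaop}) pulls $T$ back through the surjection $\mathcal{P}_S$, which avoids Hahn--Banach (unavailable for quasi-norms). In your convex setting, Hahn--Banach plus $(L^{\Phi})^{*}=L^{\Psi}$ is legitimate. It also means you need the Hilbert transform only on $L^{\Psi}$, which Boyd's theorem supplies because $\Psi$ has lower type $q'>1$ and upper type $p'<\infty$. Your explicit orthogonality step $\int_{\mathbb{R}} f\,\overline{(I-\mathcal{P}_S)h}\,dx=0$ is exactly what ties the Orlicz representative $h$ to the pairing with $G=\mathcal{P}_S(h)$; the paper's concave-case argument leaves this point implicit.

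Two things you use without justification deserve a line each.

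First, the Poisson (and Cauchy) representation of $H^{\Phi}$ functions is proved in this paper only for $\Phi\in\mathscr{L}$ (Theorem~\ref{pro:main6mmq4}). In the convex case it rests on the \emph{lower} type $p>1$ forced by $\nabla_2$, not on the upper type as you wrote. A quick proof: $F(\cdot+i\varepsilon)$ is bounded by Proposition~\ref{pro:main6mq4}. Since $\Phi(t)\gtrsim_M t^{q}$ on $[0,M]$ by the upper type $q$, $F(\cdot+i\varepsilon)$ lies in $H^{q}(\mathbb{C}_{+})$ with $q\ge p>1$. Hence it is the Poisson and Cauchy integral of its trace. Now let $\varepsilon\to0$ along a weak-$*$ convergent subsequence in $L^{\Phi}=(L^{\Psi})^{*}$, using $P_y(x-\cdot),\,(\cdot-w)^{-1}\in L^{\Psi}$. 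This gives the representation for $F$. The same argument with $\Phi$ and $\Psi$ exchanged gives the Cauchy formula for $G\in H^{\Psi}$ that your uniqueness step needs.

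Second, the density of $H^{\Phi}\cap H^{2}$ in $H^{\Phi}$ is true: approximate $F$ by $F(z+i\varepsilon)(1-i\delta z)^{-2}$. But you can avoid it entirely. The function $\overline{(I-\mathcal{P}_S)h}$ is the trace of $K:=\mathcal{P}_S(\bar h)\in H^{\Psi}(\mathbb{C}_{+})$. So $FK\in H^{1}(\mathbb{C}_{+})$ by Orlicz--H\"older, and the integral over $\mathbb{R}$ of an $H^{1}(\mathbb{C}_{+})$ trace is zero.

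Finally, the first-power kernel $(z-\bar w)^{-1}$ already lies in $H^{\Phi}$ because $p>1$, so either kernel works in your uniqueness step.
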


Let us now prove Theorem \ref{pro:mainfqmaqppaaqq5}.
\proof[Proof of Theorem \ref{pro:mainfqmaqppaaqq5}.]
The proof relies on a combination of three key ingredients: 
factorization in Hardy--Orlicz spaces, duality, and product estimates. 
These tools allow us to characterize the boundedness of Hankel operators.

\medskip

\noindent

\textit{(i)} 
Let $\Phi_{4}$ be defined by
\[
\Phi_{4}^{-1}(t)=\Phi_{1}^{-1}(t)\Psi_{2}^{-1}(t), \qquad t>0,
\]
where $\Psi_{2}$ is the complementary function of $\Phi_{2}$. By Lemma \ref{pro:mainfqmaqpq5}, $\Phi_{4}\in \mathscr{U}\cap\nabla_{2}$. Moreover,
\[
\Phi_{4}^{-1}(t)\Phi_{3}^{-1}(t)
= \Phi_{1}^{-1}(t)\Psi_{2}^{-1}(t)\frac{\Phi_{2}^{-1}(t)}{\Phi_{1}^{-1}(t)}
= \Phi_{2}^{-1}(t)\Psi_{2}^{-1}(t)\sim t,
\]
so that $\Phi_{3}$ is the complementary function of $\Phi_{4}$. Hence, by Theorem \ref{pro:main 5aqk3pl}, 
\[
\big(H^{\Phi_{4}}(\mathbb{C}_{+})\big)^{*}=H^{\Phi_{3}}(\mathbb{C}_{+}).
\]

\smallskip

We now prove the equivalence using a unified argument. For $F$ and $G$ in the appropriate Hardy--Orlicz spaces with $\|G\|\leq 1$, we use
\[
|\langle h_{b}(F),G\rangle|
=|\langle b,FG\rangle|.
\]

If $b\in H^{\Phi_{3}}(\mathbb{C}_{+})$, then by duality and product estimates,
\[
|\langle b,FG\rangle|
\lesssim \|b\|_{H^{\Phi_{3}}}^{lux}
\|F\|_{H^{\Phi_{1}}}^{lux}
\|G\|_{H^{\Psi_{2}}}^{lux},
\]
which yields
\begin{equation}\label{eq:ineitedehay}
\|h_{b}(F)\|_{H^{\Phi_{2}}}^{lux}
\lesssim \|b\|_{H^{\Phi_{3}}}^{lux}
\|F\|_{H^{\Phi_{1}}}^{lux}.
\end{equation}

Conversely, if $h_{b}$ is bounded, then for $F\in H^{\Phi_{4}}(\mathbb{C}_{+})$ with $\|F\|\leq 1$, we factorize $F=F_{1}F_{2}$ as in Theorem \ref{pro:main2aqop}. Then
\[
|\langle b,F\rangle|
=|\langle h_{b}(F_{1}),F_{2}\rangle|
\lesssim \|h_{b}\|,
\]
which implies
\begin{equation}\label{eq:ineitaqedehay}
\|b\|_{H^{\Phi_{3}}}^{lux}\lesssim \|h_{b}\|.
\end{equation}

\medskip

\textit{(ii)} 
Assume $\Phi\in \mathscr{U}\cap\nabla_{2}$, so that
\[
\Phi^{-1}(t)\Psi^{-1}(t)\approx t.
\]
Since $(H^{1})^{*}=BMOA$, the same argument as above gives the equivalence
\[
b\in BMOA(\mathbb{C}_{+})
\quad \Longleftrightarrow \quad
h_{b}:H^{\Phi}\to H^{\Phi} \text{ bounded},
\]
together with
\begin{align}
\|h_{b}(F)\|_{H^{\Phi}}^{lux}
&\lesssim \|b\|_{BMOA}\|F\|_{H^{\Phi}}^{lux}, \label{eq:ineiteday}\\
\|b\|_{BMOA}
&\lesssim \|h_{b}\|. \label{eq:ineitaqedaqehay}
\end{align}

\medskip

\textit{(iii)} 
Using $\Phi_{2}^{-1}(t)\Psi_{2}^{-1}(t)\approx t$, we obtain
\[
\Phi_{3}^{-1}(t)\approx \Phi_{1}^{-1}(t)\Psi_{2}^{-1}(t),
\]
hence $\Phi_{3}\in \mathscr{L}$. By Theorem \ref{pro:main2aaop},
\[
\big(H^{\Phi_{3}}(\mathbb{C}_{+})\big)^{*}
=\mathcal{H}\mathcal{L}^{\Phi_{3},s}(\mathbb{C}_{+}).
\]

Repeating the previous duality argument, we obtain
\begin{align}
\|h_{b}(F)\|_{H^{\Phi_{2}}}^{lux}
&\lesssim \|b\|_{\mathcal{H}\mathcal{L}^{\Phi_{3},s}}
\|F\|_{H^{\Phi_{1}}}^{lux}, \label{eq:ineitedPay}\\
\|b\|_{\mathcal{H}\mathcal{L}^{\Phi_{3},s}}
&\lesssim \|h_{b}\|. \label{eq:ineitahay}
\end{align}

\medskip

\noindent
This completes the characterization of bounded Hankel operators in the Hardy--Orlicz setting.
\epf

\bibliographystyle{plain}

\begin{thebibliography}{1}
 
\bibitem{bansahsehba}
 	\textsc{J.S. Bansah and B.F. Sehba}, \emph{Boundeness of a family of Hilbert-Type operators and its Bergman-Type analogue}, lllinois Journal of Mathematics. Vol(\textbf{59}), (2015),  pp. 949-977.
 

\bibitem{BoGre}
    \textsc{A. Bonami and S. Grellier}, \emph{Hankel operators and weak factorization for Hardy-Orlicz spaces}, Colloquium Mathematicum, (2010), \textbf{118}, pp. 107-132. 

\bibitem{BoGreseh}
    \textsc{A. Bonami, S. Grellier and B. Sehba}, \emph{Boundedness of Hankel operators on $H^{1}(\mathbb{B}_{n})$}, C. R. Math. Acad. Sci. Paris\textbf{ 344}(\textbf{12}) (2007), 749-752.


 
\bibitem{BIJZin}
   \textsc{A. Bonami, T. Iwaniec, P. Jones and M. Zinsmeister}, \emph{On the product of Functions in
   $BMO$ and $H^{1}$}, Ann. Inst. Fourier \textbf{57} (2007), 1405-1439. 

\bibitem{BoKy}
    \textsc{A. Bonami and L.D. Ky}, \emph{Factorization of some Hardy type spaces of holomorphic functions}, arXiv:1406.5438v2 [math.CA] 9 Apr 2015

   
\bibitem{BPSyme}
    \textsc{A. Bonami, M. M. Peloso and F. Symesak}, \emph{Powers of the Szego kernel and Hankel operators
    on Hardy spaces}, Mich. Math. J. \textbf{46} (1999), 225-250.
 

\bibitem{BoSehb}
    \textsc{A. Bonami and B. Sehba}, \emph{Hankel operators between Hardy-Orlicz spaces and products of holomorphic functions}, Revista de la union matematica Argentina, Volumen \textbf{50}, Numero \textbf{2}, (2009), Paginas 187-199


\bibitem{Chen}
   \textsc{S. Chen}, \emph{Geometry of Orlicz Spaces. Institute of mathematics}, polish academy of Sciences, (1996).


\bibitem{CRW}
   \textsc{R. Coifman, R. Rochberg and G. Weiss}, \emph{Factorization theorems for Hardy spaces in several variables}, Ann. Math. \textbf{103} (1976), 611-635.
 
 \bibitem{yangLiangKy}\textsc{Dachun Yang, Yiyu Liang, Luong Ky}
   \emph{Real-Variable theory of Musielack-Orlicz Hardy spaces}, Springer (2017).  
   
\bibitem{djefeuto}
       \textsc{J.M. Tanoh Dje and J. Feuto}, \emph{Applied and Numerical Harmonic Analysis: Representation theorems in
       Hardy-Orlicz spaces on the upper complex
       half-plane}. Birkhäuser (Springer Nature Switzerland AG), Suisse (Juillet 2024),  https://doi.org/10.1007/978-3-031-66375-8-2
       
       
       
       
   
\bibitem{djefeuto1}
       \textsc{Dje, JM.T., Feuto, J.}, \emph{Factorization of Hardy-Orlicz Space on the Disk and applications to Hankel Operators}. Complex Anal. Oper. Theory 20, 7 (2026). https://doi.org/10.1007/s11785-025-01843-y
      
    
 
 \bibitem{djesehb}
      \textsc{J.M. Tanoh Dje and B.F. Sehba}, \emph{Carleson embeddings for Hardy-Orlicz and Bergman-Orlicz spaces of the upper half-plane}. Funct. Approx. Comment. Math. 64 (2021), no. 2, 163–201.
 
 
 \bibitem{djesehb1}
      \textsc{J.M. Tanoh Dje and B.F. Sehba}, \emph{Carleson embeddings and pointwise multipliers between  Hardy-Orlicz and Bergman-Orlicz spaces of the upper half-plane}. New York J. Math. 31 (2025) 1271–1315
 
\bibitem{Pduren2}\textsc{P.L. Duren,}
 \emph{Theory of $H^{p}$ Spaces}, Academic Press, (1970).
 
 

 
\bibitem{Jbgarnett}
 	\textsc{J.B. Garnett}, \emph{Bounded Analytic functions}, Academic Press, Inc, Springer (2007).
 
 
 \bibitem{FGolse}
  	\textsc{F. Golse}, \emph{Distributions, analyse de Fourier, \'equations aux D\'eriv\'es partielles},  (Octobre 2012).
  	
 	
 	
\bibitem{GrePelo}
     \textsc{S. Grellier and M.M. Pelesso}, \emph{Decomposition Theorems for Hardys spaces on convex domains of finite type}, Illinois Journal of Mathematics, Volume \textbf{46}, Number \textbf{1}, Spring (2002), Pages 207-232
 	
 	

\bibitem{kokokrbec}
   \textsc{V. Kokilashvili, M. Krbec}, \emph{Weighted Inequalities in Lorentz and Orlicz spaces},World Scientific publishing. C.O.Pte.Ltd (1991).

	
\bibitem{javadmas}\textsc{J. Mashreghi}, \emph{Representation th\'eorems in Hardy spaces}, Cambridge University Press, New York, (2009).



\bibitem{raoren}
 	\textsc{M.M. Rao and Z.D. Ren}, \emph{Theory of Orlicz Spaces}, Marcel Dekker,INC, \textbf{270}.(1991).

\bibitem{wrudin}
 	\textsc{W. Rudin}, \emph{Analyse r\'eelle et complexe}, MASSON et $C^{ie}$, PARIS. (1975). 


\bibitem{sehbaedgc}
     \textsc{B.F. Sehba and E. Tchoundja}, \emph{Duality for large Bergman-Orlicz spaces Boundedness of Hankel operators},Complex variables and Elliptic Equations, Vol(\textbf{62}), February 2017.


\bibitem{sehbaedgc1}
     \textsc{B.F. Sehba and E. Tchoundja}, \emph{Hankel operators on holomorphic Hardy-Orlicz spaces}, Integral Equations and Operator Theory, Volume (\textbf{73})(2012), pages 331-349.



 \bibitem{Jan}\textsc{J. Szajkowski,}
  \emph{Modular spaces of analytic functions in the half-plane. I}. Functiones et Approximatio * XIII* 1982* UAM.  pp. 39-53.
  
 
\bibitem{Viviani}\textsc{B. E. Viviani,}
  \emph{An atomic decomposition of the predual of $BMO_{\rho}$}, Rev. Mat. Iberoamericana \textbf{3}(3-4) (1987), 401-425.
 
 
\bibitem{voltiko}\textsc{A.L. Volberg and V.A. Tolokonnikov,}
  \emph{Hankel operators and problems of best
  approximation of unbounded functions}. Translated from Zapiski Nauchnykh Seminarov Leningradskogo Matematicheskogo Instituta im. V.A. Steklova AN SSSR, \textbf{141} (1985), 5-17.
  

\end{thebibliography}

\end{document}